\DeclareMathOperator{\arcsinh}{arcsinh}
\newtheorem{theorem}{Theorem}[section]
\newtheorem{lemma}[theorem]{Lemma}
\newtheorem{proposition}[theorem]{Proposition}
\newtheorem{claim}[theorem]{Claim}
\newtheorem{example}{Example}
\theoremstyle{definition}
\newtheorem{definition}[theorem]{Definition}
\theoremstyle{remark}
\newtheorem*{remark}{Remark}
\def\paragraph#1{\noindent \textbf{#1}}
\numberwithin{equation}{section}
\def\<{\langle}
\def\>{\rangle}
\def\e{\epsilon}
\def\R{{\Bbb R}}  
\def\N{{\Bbb N}}  
\def\Z{{\Bbb Z}}  
\def\Q{{\Bbb Q}}  
\def\E{{\Bbb E}}  
\def\H{{\Bbb H}}
\let\cal=\mathcal
 \def \e {{\varepsilon}}
 \def \ba {\begin{array}}
 \def \ea {\end{array}}
 \newcommand{\be}{\begin{equation}}
 \newcommand{\ee}{\end{equation}}
\newcommand{\bea}{\begin{eqnarray}}
 \newcommand{\eea}{\end{eqnarray}}
\def\TH(#1){\label{#1}}\def\thv(#1){\ref{#1}}
\def\Eq(#1){\label{#1}}\def\eqv(#1){(\ref{#1})}
 \def \1{\mathbbm{1}}
\begin{document}

 \title[Aperiodic Sequences and Aperiodic Geodesics]
{Aperiodic Sequences and Aperiodic Geodesics}
\author[V. Schroeder]{Viktor Schroeder}
 \address{V. Schroeder\\ Institut f\"ur Mathematik der Universit\"at Z\"urich\\}
\email{viktor.schroeder@math.uzh.ch}
\author[S. Weil]{Steffen Weil}
 \address{S. Weil\\ Institut f\"ur Mathematik der Universit\"at Z\"urich\\}
\email{steffen.weil@math.uzh.ch}

\subjclass[2010]{37B10, 53D25, 37D40} 
\keywords{Recurrence, Bernoulli-shift, Geodesic Flow, Flow-invariant Sets, Symbolic Dynamics}
\date{\today}

 \begin{abstract} 
We introduce a quantitative condition on orbits of dynamical systems which measures their aperiodicity.
We show the existence of  sequences in the Bernoulli-shift and geodesics on closed hyperbolic manifolds which are as aperiodic as possible with respect to this condition.
  \end{abstract} 

\thanks{The authors acknowledge the support by the Swiss National Science Foundation (Grant: 135091).} 
\maketitle

\section{Main Results.} 

In this section we state  our main results in the case of sequences in a finite alphabet and of geodesics in hyperbolic manifolds.
Denote by $\N_0$ the natural numbers including $0$ and let $\N = \N\setminus \{ 0\}$.
Given a finite set $\cal{A}$ with $ k\geq 2$ elements, let $\Sigma = \cal{A}^{\Z}$ be the set of  
biinfinite sequences in the alphabet $\cal{A}$, which we call \emph{words}.
With $[w(i)\ldots w(i+l)]$ denote the subword of $w\in \Sigma$ starting at \emph{time} $i\in \Z$ and of \emph{length} $l\in \N_0$.
For a word $w\in\Sigma$ define the \emph{recurrence time} $R_{w}^i: \N_0 \to \N \cup \{\infty\}$ at time $i\in \Z$ by
\be	\nonumber
	R_w^i(l) = \min\{s \geq 1 : [w(i+s)\ldots w(i+s+l)]= [w(i)\ldots w(i+l)]\},
\ee
(i.e. the first instant when the sub word $[w(i)\ldots w(i+l)]$ of $w$ is seen again), and by
\be  \nonumber
	 R_w(l) := \min\{R_w^i(l) : i \in \Z\}.
\ee
For a periodic word $w\in \Sigma$ with period $p\in \N$, i.e. $w(i)=w(i+p)$ for all $i\in\Z$, we have
$R_w(l)\le p$ for all $l\in \N_0$. 
Thus, if $R_w$ is unbounded, then $w$ is aperiodic and we view the growth rate of
$R_w$ as a measure for the aperiodicity of the word $w$.
Note that $R_w$ is nondecreasing and by a trivial counting argument we have
$R_w(l) \le k^{l+1}$ for every word $w$, in particular
\be \nonumber
         \lim_{l\to \infty} \frac{1}{l} \ln R_w(l) \leq \ln(k).
\ee 
One of our main results is the existence of words $w$ such that the growth rate is as near as 
possible to this bound.

\begin{theorem}  \label{MT1}
Let $\varphi:\N_0\to [0, \infty)$ be a non-decreasing function such that
\be
\label{AsymptoticCondition}
	\lim_{l\to \infty} \frac{1}{l}\ln(\varphi(l)) \leq  \delta \ln(k)
\ee
for some $0<\delta < 1$.
Then there exist $l_0=l_0(\varphi, k,\delta) \in \N_0$ and a word $w\in \Sigma$ such that,
for every $l_0\leq l \in \N_0$, we have 
$R_w(l) \geq \varphi(l)$.
\end{theorem}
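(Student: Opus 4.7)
My plan is to produce, for each $N \in \N$, a long finite ``good'' word and then pass to a biinfinite limit by compactness. First, using (\ref{AsymptoticCondition}) with $\delta < 1$, I would fix $\eta > 0$ with $\delta + \eta < 1$ and choose $l_0$ so large that $\varphi(l) \le k^{(\delta + \eta) l}$ for every $l \ge l_0$; in particular the tail sum $\sum_{l \ge l_0} \varphi(l) k^{-(l+1)}$ is convergent and can be made arbitrarily small.

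Second, I would establish the following finite existence statement: for every $N$ there is a word $u_N \in \cA^{\{-N, \dots, N\}}$ such that for all $l \ge l_0$, all $1 \le s \le \varphi(l) - 1$, and all $i$ with $i, i+s+l \in \{-N, \dots, N\}$, the blocks $[u_N(i) \ldots u_N(i+l)]$ and $[u_N(i+s) \ldots u_N(i+s+l)]$ differ. The natural approach is probabilistic: draw $u_N$ uniformly in $\cA^{2N+1}$ and note that each ``bad'' event $B_{i,s,l}$ (the two blocks coincide) has probability exactly $k^{-(l+1)}$ and depends only on the positions $[i, i+l+s]$, a window of length at most $l + \varphi(l)$. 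Since bad events with disjoint windows are independent, a scale-resolved Lovász Local Lemma (or an equivalent deletion/repair scheme) should yield a positive probability of simultaneously avoiding every bad event, provided the dependency sums $\sum_{l'} \varphi(l') k^{-(l'+1)} \cdot (\text{locality factor})$ converge -- which is what the growth hypothesis enables after suitable tuning.

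Third, extending each $u_N$ arbitrarily to an element of $\Sigma$ and using compactness of $\cA^\Z$ in the product topology, I would extract a pointwise limit $w \in \Sigma$ along a subsequence $N_k \to \infty$. For each fixed triple $(i, s, l)$ with $l \ge l_0$ and $1 \le s \le \varphi(l) - 1$, the inequality of the two length-$(l+1)$ blocks of $w$ depends on only finitely many coordinates, so it is inherited from $u_{N_k}$ for $N_k$ large. Thus $R_w(l) \ge \varphi(l)$ for every $l \ge l_0$.

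The main obstacle is the finite existence step. A direct union bound over all triples $(i, s, l)$ in a window of size $2N+1$ produces an expected number of bad events proportional to $N$, so by itself fails to yield arbitrarily long good words. A successful probabilistic argument must exploit both the exponential decay $\Pr[B_{i,s,l}] = k^{-(l+1)}$ and the sub-exponential locality $\varphi(l) \le k^{(\delta+\eta)l}$ of each event, so that events at a common scale can be grouped together and the dependency bookkeeping stays finite uniformly in $N$; the hypothesis $\delta < 1$ is precisely what makes all of the relevant series converge.
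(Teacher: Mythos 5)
Your overall architecture (reduce to the tail estimate $\varphi(l)\le k^{(\delta+\eta)l}$, prove a finite existence statement, pass to a biinfinite limit by compactness) is sound, and your first and third steps are fine. The gap is the second step, which you yourself flag as ``the main obstacle'': the finite existence statement is asserted rather than proved, and the one concrete piece of structure you propose to feed into the Lov\'asz Local Lemma is the wrong one. You declare $B_{i,s,l}$ to be local to the spanning window $[i,i+s+l]$, of length up to $l+\varphi(l)$. With that dependency graph the LLL condition is unsatisfiable for any superlinear $\varphi$: an event at scale $l$ with shift $s$ comparable to $\varphi(l)$ has, already among the scale-$l_0$, shift-$1$ events, of order $\varphi(l)$ neighbours, each of which must carry weight $x_{B'}\ge\Pr[B']=k^{-(l_0+1)}$; hence $\prod_{B'\sim B}(1-x_{B'})\le(1-k^{-(l_0+1)})^{c\varphi(l)}$, which is exponentially small in $\varphi(l)$, while $\Pr[B_{i,s,l}]=k^{-(l+1)}$ is only exponentially small in $l$. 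No tuning of weights closes that gap. The repair is to observe that $B_{i,s,l}$ depends only on the at most $2(l+1)$ coordinates $\{i,\dots,i+l\}\cup\{i+s,\dots,i+s+l\}$ (and that $\Pr[B_{i,s,l}]=k^{-(l+1)}$ also when $s\le l$, since the coincidence forces $s$-periodicity on a block of length $s+l+1$). Then $B_{i,s,l}$ has at most $4(l+1)(l'+1)\varphi(l')$ neighbours at scale $l'$, and the asymmetric LLL with weights $x_{i,s,l}=a^{-(l+1)}$ for a fixed $a$ with $k^{\delta+\eta}<a<k$ goes through once $l_0$ is large enough that $8\sum_{l'\ge l_0}(l'+1)\varphi(l')a^{-(l'+1)}\le\ln(k/a)$. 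Until this (or equivalent bookkeeping) is written down, the decisive step of your proof is missing.

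For comparison, the paper avoids probability entirely: Theorem \ref{TheoremSequence} runs a deterministic induction on the set $\mathcal{W}^g(m)$ of good finite words, the key point (Lemma \ref{Iteration}) being that each new condition $C_{is}$ imposed at length $m+1$ destroys at most $\lvert\mathcal{W}^g(i+s-1)\rvert$ words, because a good prefix of length $i+s-1$ extends in at most one way to a word violating $C_{is}$. Grouping the conditions by the scale $\ell(s)$ yields $\lvert\mathcal{W}^g(m)\rvert\ge c^m$ under \eqref{B}, and Theorem \ref{MT1} follows by verifying \eqref{B} for $\bar\varphi(l)=k^{\tilde\delta l}$. Your probabilistic route, once the dependency structure is corrected as above, is a legitimate and arguably shorter alternative, but as written it does not yet constitute a proof.
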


Now let $M$ be a closed $n$-dimensional hyperbolic manifold, where $n\geq 2$. 
Let $i_M>0$ denote the injectivity radius of $M$  and let  $d$ be the Riemannian distance function on $M$.
For a unit speed geodesic $\gamma:\R\to M$ we define the \emph{recurrence time} 
$R_{\gamma}^{t_0} : [0, \infty) \to [i_M/2, \infty]$ at time $t_0\in \R$ by 
\be 	\nonumber
	R_{\gamma}^{t_0} (l) = \inf \{ s > i_M/2 :  d(\gamma(t_0+  t), \gamma(t_0+s+t))< \frac{i_M}{2} \text{ for all } 0\leq t \leq l  \}.
\ee
and
\be	\nonumber
	 R_{\gamma}(l) := \inf \{ R_{\gamma}^{t_0} (l) : t_0\in \R\}.
\ee
If $\gamma$ is a periodic geodesic, then $R_{\gamma}$ is bounded and again one can view the growth rate of
$R_{\gamma}$ as a measure for the aperiodicity of $\gamma$.

\begin{theorem} \label{MT}
Let $\varphi: [0, \infty) \to [0, \infty)$ be a non-decreasing function such that
\be
\label{AsymptoticCondition2}
	\lim_{l\to \infty} \frac{1}{l} \ln(\varphi(l)) \leq \delta (n-1)
\ee
for some $0<\delta <1$. If  $i_M>2 \ln(2)$ then there exist $ l_0 = l_0(\varphi, \delta,n,i_M)\geq 0$ and a unit speed geodesic 
$\gamma: \R \to M$ such that for all $l\geq l_0$, we have $R_{\gamma}(l) \geq \varphi(l)$.
\end{theorem}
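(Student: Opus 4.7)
The plan is to reduce Theorem~\ref{MT} to Theorem~\ref{MT1} via a symbolic coding of the geodesic flow on $M$. Fix a basepoint $p \in M$ and a window length $L > 0$ (to be chosen). Using the exponential growth rate $n-1$ of the deck group $\Gamma = \pi_1(M)$ acting on $\H^n$ by isometries, for $L$ large enough one can extract an alphabet $\cal A = \{\alpha_1,\dots,\alpha_k\}$ of unit-speed closed geodesic loops $\alpha_j$ based at $p$, each of length $L_j \in [L, L+C]$, such that $k \geq e^{(1-\ve)(n-1)L}$ and the initial and terminal unit tangent vectors of the $\alpha_j$ at $p$ are pairwise well-separated in $S_p M$ at an angular scale of order $e^{-L}$. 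The hypothesis $i_M > 2\ln 2$ is used precisely here via a hyperbolic-trigonometric comparison in $\H^n$, to ensure that distinct concatenations of symbol loops cannot be mutually shadowed at scale $i_M/2$ at the basepoint.

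Given any biinfinite word $w = (w_i)_{i\in\Z} \in \{1,\dots,k\}^\Z$, I then concatenate the arcs $\alpha_{w_i}$ into a piecewise geodesic curve through $p$ at times $T_i(w) \in \R$. By the Anosov property of the geodesic flow on $M$ together with the angular separation above, this pseudo-orbit is shadowed by a unique unit-speed geodesic $\gamma_w : \R \to M$ at distance $\ll i_M/2$, and the symbolic itinerary of $\gamma_w$ in the induced Markov-type partition of $T^1 M$ is exactly $w$. Consequently, a near-return $d(\gamma_w(t_0 + t),\gamma_w(t_0 + s + t)) < i_M/2$ for all $t \in [0, l L']$ forces $w$ to agree with a shift of itself on a block of length at least $l - C'$. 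This yields the comparison
\be\nonumber
R_{\gamma_w}(l L') \;\ge\; (L-C)\cdot R_w(l - C'),\qquad l\ge C',
\ee
with constants $C, C', L' = L+C$ depending only on $M$.

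To finish, let $\varphi$ satisfy \eqref{AsymptoticCondition2} with $0<\delta<1$, pick $\delta < \delta'' < 1$, and fix $L$ so large that $\ln k \geq \delta''(n-1)L$. Define $\psi(l) := \varphi((l+C')L')/(L-C)$; then
\be\nonumber
\limsup_{l\to\infty}\frac{1}{l}\ln \psi(l) \;\le\; \delta (n-1)L' \;\le\; \frac{\delta}{\delta''}\ln k,
\ee
with $\delta/\delta'' < 1$. Applying Theorem~\ref{MT1} to $\psi$ with alphabet size $k$ produces a word $w$ and an index $l_1$ such that $R_w(l) \ge \psi(l)$ for all $l \ge l_1$, and unwinding the above comparison gives $R_{\gamma_w}(l) \ge \varphi(l)$ for $l \ge l_0 := (l_1 + C')L'$.

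\textbf{Main obstacle.} The delicate step is the first paragraph together with the symbolic-to-metric comparison in the second: producing an alphabet of near-maximal size $k \approx e^{(n-1)L}$ whose concatenations cannot mutually approach each other at scale $i_M/2$. The quantitative bound $i_M > 2\ln 2$ enters exactly here, through the explicit hyperbolic comparison of angles at $p$ and distances after running the geodesic flow for time $L$. Once this separation-cum-shadowing package is in place, the final reduction to Theorem~\ref{MT1} is essentially a bookkeeping exercise on the entropy rates.
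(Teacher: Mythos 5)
Your overall strategy (code the geodesic flow symbolically and quote Theorem \ref{MT1}) is reasonable in spirit, and it is genuinely different from the paper's route (which builds the geodesic directly in $\H^n$ by shadowing $2^{n-1}$ nested cubes on horospheres and counting good cubes, Theorem \ref{TheoremGeodesic}). But the two steps you flag as "delicate" are not bookkeeping: they are the whole content of the theorem, and as sketched they do not go through. The central unsupported claim is that a near-return $d(\gamma_w(t_0+t),\gamma_w(t_0+s+t))<i_M/2$ for all $0\le t\le lL'$ forces $w$ to agree with an \emph{integer} shift of itself on a block of length about $l$. The shift $s$ is an arbitrary real, not a near-multiple of the block length $L'$; lifting to $\H^n$, the near-return yields $\psi\in\Gamma$ carrying the concatenation close to itself, but the vertices $\psi G_j\tilde p$ of the translated path may sit near the \emph{interiors} of edges of the original path rather than near its vertices, and the $2i_M$-separation of the orbit $\Gamma\tilde p$ gives no contradiction in that configuration. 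Moreover, to make arbitrary concatenations uniformly shadowable you must exclude near-backtracking at $p$, which forces the bending angles at the vertices to be small, so the corners are not metrically detectable at scale $i_M/2$ and cannot be used to align the two itineraries; and once some concatenations are excluded your coding is a proper subshift, so Theorem \ref{MT1} (a statement about the full shift) can no longer be invoked as a black box. Without an actual argument here the displayed comparison $R_{\gamma_w}(lL')\ge (L-C)\,R_w(l-C')$ is unjustified.

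There is a second, independent gap: shifts in the range $i_M/2<s\lesssim L$ are invisible to the symbolic reduction (the induced word shift rounds to $0$), yet $R_\gamma(l)$ is an infimum over all $s>i_M/2$ and $\varphi(l)\to\infty$, so precisely these shifts must be excluded for all large $l$. They can occur even when $w$ has no symbolic near-period, for instance when a single alphabet loop, or a short block of the concatenation, winds along a short closed geodesic of $M$; nothing in your alphabet construction prevents this. This is exactly why the paper, after its discrete construction, needs a separate closing-lemma argument (Lemma \ref{ClosedGeodesic} and the case $s\le s_0$ in the proof of Theorem \ref{MainThm}) converting a long near-return with small shift into one with shift $>s_0$ before concluding Theorem \ref{MT}; your proposal has no counterpart to this step. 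Finally, the role of $i_M>2\ln 2$ is only asserted in your sketch ("a hyperbolic-trigonometric comparison"); in the paper it enters concretely as the requirement $\ln 2+\e_0<i_M$ with $\e_0=i_M/2$, i.e.\ the factor $e^{r_0}>2$ needed so that the shadow of a cube on the next horosphere contains $2^{n-1}$ disjoint copies, and your approach would need an equally explicit substitute.
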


The theorems will be shown in greater generality.

\begin{remark} 
The bounds $\ln(k)$ and $n-1$ equal the topological entropies of the respective dynamical systems.
Moreover, we believe that the assumption on the injectivity radius in Theorem \ref{MT} is not necessary.
A version of this theorem is also true if $M$ is of strictly negative curvature.
However, for the sake of clarity of the paper we restrict to these assumptions.
\end{remark}

\emph{Organization of the paper.} In Section $2$ we will introduce the measure of aperiodcitiy  for general dynamical systems and deduce immediate properties.
In Section $3$ and $4$ we examine two examples and  state the main results, namely of the Bernoulli-shift and the geodesic flow on a closed hyperbolic manifold.
These will be proven in Section $5$.
\\

\emph{Acknowledgement.} 
We want to thank Shahar Mozes for helpful discussions.
The second author would like to thank Jean-Claude Picaud for many fruitful discussions and comments and the University of Tours for its hospitality during his stay in January 2012.

\section{$F$-Aperiodic Points.}
Let $(X,d)$ be a compact metric space and let $T : X \to X$ be a given continuous transformation. 
 For $n\in\N_0$ let $T^n$ be the $n$-times  composition of $T$ (where $T^0=id_X$) and 
for a point $x\in X$ let $T^nx$ be the point in the orbit $\cal{T}(x) := \{T^nx\}_{n\in \N_0}$ of $x$ at \emph{time} $n$.
Let moreover $\mu$ be a finite Borel-measure on the 
Borel-$\sigma$-algebra $\cal{B}$ of $(X,d)$ such that $T$ is measure-preserving; see \cite{Einsiedler}.

A point $x\in X$ is called \emph{periodic} (with respect to $T$) if 
there exists an integer $p\in \N$, called a \emph{period} of $x$, 
such that $T^px=x$. 
Denote by $\cal{P}_T$ the $T$-invariant set of $T$-periodic points of $X$.
A point is called \emph{aperiodic}, if it is not periodic.

A point $x \in X$ is \emph{recurrent} with respect to $T$, if for any 
$\varepsilon>0$ there exists $s=s(x,\varepsilon) \in \N$ such that $d(T^sx,x) < \varepsilon$. 
Periodic points are obviously recurrent.
The set $\cal{R}_T$ of recurrent points is nonempty (see \cite{Furstenberg}) and $T$-invariant.
However $s(T^i x,\varepsilon)$ can differ from $s(x,\varepsilon)$ in general, 
unless $T$ is an isometry on its orbit $\cal{T}(x)$;  that is, $d(T^{i+s}x, T^ix) = d(T^sx,x)$ for all $i$ and $s\in\N_0$. 
We recall that by the Poincar\'e-recurrence theorem, $\mu$-almost every point is recurrent.

In this paper
we give a quantitative version of recurrence and aperiodicity.
Given a point $x\in X$ and a \emph{time} $i\in \N_0$, 
we ask for a
lower bound on the \emph{shift} $s$ such that $T^{i+s} x$ is allowed to be $\varepsilon$-close to $T^ix$:

\begin{definition} \label{Def1}
For a non-increasing function $F : (0, \infty) \to [0, \infty)$ a point $x \in X$ is 
called \emph{$F$-aperiodic} at  \emph{time} $i\in\N_0$ if for every  $\varepsilon>0$, whenever
\be \nonumber
	d(T^ix, T^{i+s}x)<\varepsilon 	
\ee
for some $s \in \N$, then $s > F(\varepsilon)$.
If $x$ is $F$-aperiodic at every time $i \in \N_0$ then it is called \emph{$F$-aperiodic}. 
\end{definition}

We emphasize that although we called the condition "$F$-aperiodic", 
a periodic point $x$ is 
$F$-aperiodic for a suitable bounded function $F$. 
However, if the function $F$ is unbounded, an $F$-aperiodic point must be aperiodic.
Moreover, if $x$ is not recurrent, then it is easy to find an 
unbounded function $F$ such that $x$ is $F$-aperiodic at least  at time $0$.

Let $F : (0, \infty) \to [0, \infty)$ be a given non-increasing function.
Clearly, if a non-increasing function $\bar F$ satisfies $\bar F(s)\leq F(s)$ for all 
$s\in (0, \infty)$ then an $F$-aperiodic point is also $\bar F$-aperiodic.
On the other hand, using the upper box dimension $\dim_B(X)$ for metric spaces,  
we obtain an  upper bound on the  growth rate (as $\e$ tends to $0$) of functions $F$ such that an $F$-aperiodic point might exist.
For $\e>0$ let $N(X,\e)$ denote the largest number of disjoint metric balls of radius $\e$. 
Then the upper box dimension (\cite{Robinson}) is given by
\be \nonumber
	\dim_B(X) = \limsup_{\e\to 0} \frac{\ln(N(X, \e))}{-\ln(\e)}.
\ee

\begin{lemma}\label{UpperBound} 
Let $x$ be an $F$-aperiodic point.
Then $\limsup_{\e\to 0}\frac{\ln(F(\varepsilon))}{\ln(2/\e)} \leq \dim_B(X)$. 
\end{lemma}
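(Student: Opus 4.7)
The plan is to use $F$-aperiodicity to produce, for each sufficiently small $\varepsilon>0$, roughly $F(\varepsilon)$ pairwise $\varepsilon$-separated points in $X$, and then compare with the upper box dimension.

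First I would fix $\varepsilon>0$ and set $N=\lfloor F(\varepsilon)\rfloor+1$. The key observation is that the first $N$ orbit points $x,Tx,\ldots,T^{N-1}x$ of the $F$-aperiodic point $x$ are pairwise $\varepsilon$-separated. Indeed, for any $0\le i<j\le N-1$ the shift $s:=j-i$ satisfies $1\le s\le F(\varepsilon)$. Applying Definition \ref{Def1} at time $i$: if we had $d(T^ix,T^{i+s}x)<\varepsilon$, the condition would force $s>F(\varepsilon)$, contradicting $s\le F(\varepsilon)$. Hence $d(T^ix,T^jx)\geq\varepsilon$ for all such $i<j$.

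Next I would translate this separation statement into a lower bound on the covering number. Since the $N$ points are pairwise at distance at least $\varepsilon$, the open metric balls of radius $\varepsilon/2$ around them are pairwise disjoint, so
\[
N(X,\varepsilon/2)\ \geq\ N\ \geq\ F(\varepsilon).
\]
Taking logarithms and dividing by $\ln(2/\varepsilon)=-\ln(\varepsilon/2)$ yields
\[
\frac{\ln F(\varepsilon)}{\ln(2/\varepsilon)}\ \leq\ \frac{\ln N(X,\varepsilon/2)}{-\ln(\varepsilon/2)}
\]
whenever $F(\varepsilon)\geq 1$ and $\varepsilon<2$. Passing to $\limsup_{\varepsilon\to 0}$ (equivalently, along $\varepsilon'=\varepsilon/2\to 0$) on the right gives exactly the definition of $\dim_B(X)$, so the claim follows.

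The only edge case is when $F$ stays bounded as $\varepsilon\to 0$; then $\ln F(\varepsilon)$ is bounded, the left-hand side tends to $0$, and the inequality holds trivially since $\dim_B(X)\geq 0$. There is no real obstacle here: the main content is the clean separation argument in the first step, and everything else is bookkeeping with the definition of the box dimension.
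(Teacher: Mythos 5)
Your argument is correct and is essentially the paper's own proof: both show that the orbit points $T^sx$ for $0\le s\le F(\varepsilon)$ are pairwise $\varepsilon$-separated (so the balls $B(T^sx,\varepsilon/2)$ are disjoint), deduce $F(\varepsilon)\le N(X,\varepsilon/2)$, and conclude by the definition of $\dim_B(X)$. Your write-up just makes explicit the counting and the final limsup step that the paper leaves implicit.
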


\begin{proof} 
Let $\varepsilon>0$.
If $B(T^{s_1}x, \varepsilon/2)\cap B(T^{s_2}x, \varepsilon/2)\neq \emptyset$ for some $0\leq s_1 < s_2\leq F(\varepsilon)$, 
we have $d(T^{s_1}x, T^{s_2}x) < \varepsilon_0$ which is impossible since $s_2-s_1 \leq F(\varepsilon_0)$.
Therefore the metric balls $B(T^sx, \varepsilon/2)$ must be disjoint for $s\leq F(\varepsilon)$.
Hence we have $F(\e) \leq N(X, \e/2)$.
\end{proof}

Moreover, since $F$ is independent of the time $i\in \N_0$, the set $\cal{F}_T \subset X$ of $F$-aperiodic points  is $T$-invariant. In the case when $(X, \cal{B}, \mu, T)$ is ergodic, $\cal{F}_T$ is either of full or of zero $\mu$-measure. 
When $\cal{P}_T$ is nonempty, this question is related to the distribution of periodic orbits.
In fact, let $x_0 \in \cal{P}_T$ be of minimal period $p_0$ and assume that $F(\e)\geq p_0$ for some $\e_{p_0}>0$. 
In the case when $F$ is continuous, we may choose $\e_{p_0} := \sup\{\e>0 : F(\e) \geq p_0\}$. 
Define  the \emph{critical neighborhood} of $x_0$ with respect to $F$ and $p_0$ by
\be \label{CriticalNbhd}
	\cal{N}_{x_0} := B(x_0, \e_{p_0}/2) \cap T^{-p_0}(B(x_0, \e_{p_0}/2)).
\ee
Whenever $x\in \cal{N}_{x_0}$ we have by the triangle inequality that $d(x,T^{p_0}x) <\e_{p_0}$, but $p_0\leq F(\e_{p_0})$. Thus, no point in $\cal{N}_{x_0}$ can be $F$-aperiodic and we see that the orbit of an $F$-aperiodic point must avoid the critical neighborhoods of periodic points.
If in addition $\mu(\cal{N}_{x_0})>0$ then the set of $F$-aperiodic points cannot be of full and must therefore be of zero $\mu$-measure.
Thus, we showed the following criterion. 

\begin{lemma}\label{Measure}
Assume $\cal{P}_T\neq \emptyset$ and let $x_0$ be a periodic point of period $p_0$ and $F(\e)\geq p_0$ for some $\e>0$.
If $\mu$ is ergodic and  positive on $\cal{N}_{x_0}$  then the set $\cal{F}_T$ has $\mu$-measure $0$. 
\end{lemma}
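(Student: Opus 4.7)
The plan is to globalize the pointwise observation recorded in the paragraph preceding the lemma (that no $x \in \cal{N}_{x_0}$ is $F$-aperiodic at time $0$) to the whole orbit, and then close the argument with ergodicity.

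The first step is the trivial but crucial remark that $x$ is $F$-aperiodic at time $i$ if and only if $T^i x$ is $F$-aperiodic at time $0$, since both conditions only involve the distances $d(T^{i+s}x, T^i x) = d(T^s(T^i x), T^i x)$. Combined with the observation preceding the lemma, this forces any $x \in \cal{F}_T$ to satisfy $T^i x \notin \cal{N}_{x_0}$ for every $i \in \N_0$. Hence
\be \nonumber
  \cal{F}_T \;\subseteq\; B \;:=\; \bigcap_{i \geq 0} T^{-i}(X \setminus \cal{N}_{x_0}).
\ee
I would also note in passing that $\cal{F}_T$ and $B$ are Borel (by rewriting the $F$-aperiodicity condition as a countable intersection, over $s \in \N$, of the closed sets $\{x : d(x, T^s x) \geq \e_s\}$ for a suitable $\e_s \geq 0$), so talking about their $\mu$-measure is unambiguous.

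The second step is to show $\mu(B) = 0$. Since $\mu$ is ergodic and $\mu(\cal{N}_{x_0}) > 0$, Birkhoff's ergodic theorem applied to $\1_{\cal{N}_{x_0}}$ gives, for $\mu$-a.e.\ $x$,
\be \nonumber
  \lim_{n \to \infty} \frac{1}{n} \sum_{i=0}^{n-1} \1_{\cal{N}_{x_0}}(T^i x) \;=\; \mu(\cal{N}_{x_0}) \;>\; 0,
\ee
so in particular the forward orbit of $\mu$-a.e. $x$ meets $\cal{N}_{x_0}$. This means $\mu(X \setminus B) = 1$, hence $\mu(B) = 0$ and a fortiori $\mu(\cal{F}_T) = 0$. (One could equally invoke Poincar\'e recurrence, or argue directly that the forward-saturation $\bigcup_{i \geq 0} T^{-i} \cal{N}_{x_0}$ is $T^{-1}$-descending and so, up to $\mu$-null modification, a $T$-invariant set of positive measure.)

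I do not expect any genuine obstacle: all the geometric content has already been absorbed into the definition of the critical neighborhood $\cal{N}_{x_0}$ and the observation preceding the statement, and what remains is a routine ergodic-theoretic bookkeeping. The only point that needs a line of care is the passage from "forward $T^{-1}$-invariance" to an invariant set mod null sets, which is automatic once $\mu$-invariance of $T$ is used.
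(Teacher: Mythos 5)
Your proof is correct, and it closes the argument by a slightly different ergodic mechanism than the paper. The paper's own proof is the paragraph preceding the lemma: it first notes that $\cal{F}_T$ is $T$-invariant (since $F$ does not depend on the time $i$), so by ergodicity $\mu(\cal{F}_T)$ is either zero or full; it then observes, exactly as you do, that no point of $\cal{N}_{x_0}$ is $F$-aperiodic, so $\cal{F}_T\cap\cal{N}_{x_0}=\emptyset$, and since $\mu(\cal{N}_{x_0})>0$ the full-measure alternative is excluded. You instead never invoke the zero--one dichotomy for $\cal{F}_T$ itself: you place $\cal{F}_T$ inside the forward-avoidance set $B=\bigcap_{i\ge 0}T^{-i}(X\setminus\cal{N}_{x_0})$ and kill $B$ by Birkhoff (equivalently, by the sweep-out argument for $\bigcup_{i\ge0}T^{-i}\cal{N}_{x_0}$). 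The trade-off: the paper's route is shorter once the invariance of $\cal{F}_T$ has been recorded, but it tacitly needs $\cal{F}_T$ to be measurable (which is fine, since Lemma \ref{ClosedCond} shows it is closed); your route only needs $\cal{F}_T$ to sit inside a Borel null set, so measurability of $\cal{F}_T$ is not essential, and the same argument gives the slightly stronger statement that $\mu$-a.e.\ point fails to be $F$-aperiodic at infinitely many times. One cosmetic remark: in your measurability aside, the closed sets $\{x: d(x,T^sx)\ge\e_s\}$ only encode $F$-aperiodicity at time $0$; to get $\cal{F}_T$ you should intersect their preimages $T^{-i}\{x: d(x,T^sx)\ge\e_s\}$ over all $i\in\N_0$ as well (or simply quote Lemma \ref{ClosedCond}). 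This does not affect the proof.
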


In particular, this result is interesting for the \emph{systolic point}   $x_0\in \cal{P}_T$ of  \emph{systolic period}  $p_0 \in \N$, that is, 
$x_0$ has minimal period $p_0$  and for every periodic point in $X$ of period $p$ we have $p\geq p_0$. 

\begin{lemma} \label{ClosedCond}
$F$-aperiodicity is a closed condition. 
\end{lemma}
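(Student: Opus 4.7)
The plan is to show directly that the set $\cal{F}_T$ of $F$-aperiodic points is closed, by a standard limit argument that exploits (i) the continuity of $T$, hence of each iterate $T^j$, and (ii) the fact that the defining conditions are closed inequalities.

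First I would reformulate Definition \ref{Def1} in its contrapositive form: a point $y\in X$ is $F$-aperiodic if and only if for every $\e>0$, every $i\in\N_0$, and every $s\in\N$ with $s\le F(\e)$ one has
\be \nonumber
   d(T^iy, T^{i+s}y) \ge \e.
\ee
This is the more convenient form because it presents $F$-aperiodicity as an intersection over all triples $(\e,i,s)$ of conditions of the shape $\{y : d(T^iy,T^{i+s}y)\ge \e\}$, each of which is closed.

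Next, I would let $(x_n)\subset \cal{F}_T$ be a sequence converging to some $x\in X$ and verify the above condition at $x$. Fix arbitrary $\e>0$, $i\in\N_0$, and $s\in\N$ with $s\le F(\e)$. Since each $x_n$ is $F$-aperiodic, $d(T^ix_n, T^{i+s}x_n)\ge \e$ for every $n$. Continuity of $T$ implies continuity of $T^i$ and $T^{i+s}$, so $T^ix_n\to T^ix$ and $T^{i+s}x_n\to T^{i+s}x$, whence by continuity of the metric
\be \nonumber
  d(T^ix, T^{i+s}x) = \lim_{n\to\infty} d(T^ix_n, T^{i+s}x_n) \ge \e.
\ee
Since $\e$, $i$, $s$ with $s\le F(\e)$ were arbitrary, $x$ satisfies the reformulated $F$-aperiodicity condition and hence belongs to $\cal{F}_T$.

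There is no real obstacle here; the only subtlety worth flagging is that the strict inequality ``$d(T^ix, T^{i+s}x)<\e$'' in Definition \ref{Def1} becomes the closed inequality ``$\ge \e$'' after taking the contrapositive, which is why passing to the limit in a sequence preserves the property even though the original definition uses a strict sign. Should one prefer to work directly with Definition \ref{Def1}, the same argument goes through by contradiction: if $x$ were not $F$-aperiodic then there would exist $i, s, \e$ with $s\le F(\e)$ and $d(T^ix,T^{i+s}x)<\e$, and by continuity this strict inequality would already hold at some $x_n$ for $n$ large, contradicting $x_n\in\cal{F}_T$.
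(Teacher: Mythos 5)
Your proof is correct and is essentially the paper's argument in contrapositive form: both rest on the continuity of the iterates $T^i$, $T^{i+s}$ and on passing to the limit in the distance condition (the paper does this via the triangle inequality with a $\tfrac12(\varepsilon-d(T^ix,T^{i+s}x))$ margin, you via the closedness of the inequality $d\geq\varepsilon$). No gap; nothing further to add.
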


\begin{proof}
Let $\{x_n\}_{n \in \N}$ be a sequence of $F$-aperiodic points in $X$ converging to $x\in X$. Let $i$ and $s \in \N$ be fixed. 
For $\varepsilon>0$ such that $d(T^ix, T^{i+s}x)< \varepsilon$  let $d:= \frac{1}{2}(\varepsilon - d(T^ix, T^{i+s}x))$. 
Since $T$ is continuous, there exists $N=N(i,s, d) \in \N_0$ such that for all $n\geq N$ we have $d(T^ix, T^ix_n)< d$ and $d(T^{i+s}x, T^{i+s}x_n)<d$.
From the triangle inequality we obtain 
\be \nonumber
	d(T^{i}x_n, T^{i+s}x_n) \leq d(T^ix_n, T^ix) + d(T^ix, T^{i+s}x) + d(T^{i+s}x, T^{i+s}x_n) < \varepsilon
\ee
for $n\geq N$ so that $s>F(\varepsilon)$ since $x_n$ is $F$-aperiodic. Hence, $x$ is also $F$-aperiodic.
\end{proof}

Finally, note that if $T$ acts as an isometry on the orbit $\cal{T}(x)$ of a point $x\in X$, then $x$ is $F$-aperiodic  as soon as it is $F$-aperiodic at a given time. For instance, we consider the rotation on the circle as a motivating example:

\begin{example}\label{Ex1}
\emph{
Let $\Z$ act on $\R$ by translations and let $X=\R / \Z$ be the compact quotient space with the induced metric $d$ obtained from the Euclidean metric. Given an irrational number $0<\alpha\in \R\setminus \Q$, we let $T=T_{\alpha}:X \to X$ be the automorphism induced by the translation $\tilde T : \R \to \R$, $\tilde T(x) := x+\alpha$. For $c>0$ we let $F_c: (0,\infty) \to [0,\infty)$, $F_c(t) = c t^{-1}$. In fact, since $\dim_B(X)=1$, $-1$ is the optimal exponent due to Lemma \ref{UpperBound}. The point $[0]$ is $F_c$-aperiodic if and only if every point $[x]$ is $F_c$-aperiodic and hence $\cal{F}_T$ is either empty or $X$ itself.
Moreover,
since $T$ is an isometry, $[0]$ is $F_c$-aperiodic as soon as it is $F_c$-aperiodic at time $0$.
The question for which $c$ and $\alpha$ there exist $F_c$-aperiodic points can be answered by classical Diophantine approximation; see for instance \cite{Dodson} for the following well-known results:
Let $\mu$ be the Lebesgue measure on $\R$.
For $\mu$-almost every $\alpha \in \R\setminus \Q$ we have $c_0(\alpha)=0$, where
\be \nonumber
	c_0(\alpha) = \inf\{c>0 : \text{there exist infinitely many } p\in \Z, q\in \N \text{ such that } \lvert \alpha - \frac{p}{q} \rvert< \frac{c}{q^2} \}.
\ee
However, there exists a set of Hausdorff-dimension one such that $c_0(\alpha)$ is positive. Such an $\alpha$ is called badly approximable.
The supremum $\sup_{\alpha\in \R\setminus \Q} c_0(\alpha)$ of this set, called the Hurwitz-constant, is equal to  $1/\sqrt 5$ and attained at the golden ratio.}

\emph{
First, let $\alpha$ such that $c_0(\alpha)=0$.
Then for $c>0$ we have for infinitely many $p\in \Z$, $q\in \N$,
\be \label{Diophantine}
	\lvert \tilde T^q 0 - p \rvert  = \lvert q \alpha - p \rvert = q \lvert \alpha - \frac{p}{q} \rvert < c q^{-1},
\ee
hence $q\leq  F_c( c q^{-1})$ and we see that $[0]$ is not $F_c$-aperiodic for any $c>0$. Thus, $\cal{F}_T$ is empty.
In particular, this shows that for $c>1/\sqrt 5$ the set $\cal{F}_{T}$ is empty for every $T=T_{\alpha}$, $\alpha \in \R\setminus \Q$  irrational.
However, for $\alpha$ a badly approximable number we have  $c_0(\alpha)>0$ and  for $c<c_0(\alpha)$ there are only finitely many $p$, $q$ as in \eqref{Diophantine}. Hence we can choose some $0<\bar c\leq c_0(\alpha)$ such that $[0]$ is $F_{\bar c}$-aperiodic and therefore $\cal{F}_T =X$.
\\
If we conversely assume that $[0]$ is $F_c$-aperiodic, then whenever $\lvert \tilde T^q 0 - p \rvert <\e$ for some $\e>0$ we have $q> F_c(\e) = c/\e > \frac{c}{q \lvert \alpha - p/q \rvert}$. Thus, $\lvert \alpha - \frac{p}{q} \vert > \frac{c}{q^2}$ for every $p\in \Z$, $q\in \N$ and $\alpha$ is necessarily  a badly approximable number.
}
\end{example}

In the following we are concerned with the examples of the Bernoulli-shift and the geodesic flow on a closed hyperbolic manifold where the question of existence of $F$-aperiodic points is more delicate.

\begin{remark}
A somewhat orthogonal problem has been studied by many authors. For instance, \cite{Bosh} showed that the rate of recurrence can be quantified in the case when $X$ has finite Hausdorff-dimension.
More precisely,  assume that  the $\alpha$-dimensional Hausdorff-measure $H_{\alpha}$ is $\sigma$-finite for some $\alpha>0$, then for $\mu$-almost every point $x\in X$ there exists a finite constant $c(x)\geq0$ such that
\be \nonumber
	\liminf_{n\to \infty} n^{1/\alpha}d(x, T^n(x)) \leq c(x).
\ee

Assume  that there exists a point $x\in X$ which is $F$-aperiodic at time $0$ for the function $F(\e)= c\cdot \e^{-\alpha}$ for some $c>0$ (compare with Lemma \ref{UpperBound}). 
Then it is not hard to show that for every $n>0$,
\be \nonumber
	n^{1/\alpha} d(x, T^nx) \geq c^{1/\alpha}.
\ee
The main point in our paper is that we study the recurrence for every point of the orbit and not only for the initial one. 
\end{remark}


\section{Sequences.}
Let  $\mathcal{A}$ be a finite set of $k\geq2$ elements which we call \emph{alphabet}. Let $\Sigma^+ = \{w : \N \to \mathcal{A}\}$ and  $\Sigma = \{w : \Z \to \mathcal{A}\} $  be the set  two-sided sequences in symbols from $\mathcal{A}$. 
The elements of $\Sigma$ are called \emph{words}. 
Given words $w$ and $\bar w$ in $\Sigma$ we let $a(w, \bar w) = \max\{ i \geq 0 : w(i) = \bar w(i)$ for $\lvert j \rvert \leq i \}$ for $w \neq \bar w$ and  define $\bar d(w, \bar w) := 2^{-a(w, \bar w)}$, and $\bar d(w, w) :=0$ otherwise.
Let $T$ denote the shift operator acting on $\Sigma$, with $T(w) = \bar w$ where $\bar w(i) = w(i+1)$.
Then, $(\Sigma, \bar d)$ is a compact metric space such that $T$ is a homeomorphism.
Moreover, let $\mathcal{B}$ denote the product $\sigma$-algebra of the power set $\mathcal{P}(\mathcal{A})$ of $\mathcal{A}$ which equals the Borel-$\sigma$-algebra of $(\Sigma,\bar d)$.  
Let (the probability measure) $\mu=\prod_{\Z}\mu_{\mathcal{A}}$ be the infinite product measure of $\cal{B}$ where $\mu_{{\mathcal A}}$ is a probability measure on $(\mathcal{A}, \mathcal{P}(\mathcal{A}))$. 
Then the \emph{Bernoulli-shift} $(\Sigma, \mathcal{B}, \mu, T)$ is ergodic. 
For details we refer to \cite{Einsiedler}.

Note that by definition of $\bar d$, two words are close if and only if the length of their subwords around position $0$ on which they agree is large. In particular, if $w\in \cal{R}_T$ then,  by recurrence applied to the word $T^iw$, for every length $l \in \N_0$ we can find an $s=s(i,l)\in\N$ such that $[w(i)\ldots w(i+l)]=[w(i+s)\ldots w(i+s+l)]$. 
In the case of sequences it is suitable to reformulate $F$-aperiodicity as follows (see Proposition \ref{EquiDef}).

\begin{definition} \label{AperiodicSequences}
For a non-decreasing function $\varphi : \N_0 \to [0, \infty)$  a  word $w \in \Sigma$ is called \emph{$\varphi$-aperiodic} at  \emph{time} $i\in\Z$, if for every  \emph{length} $l\in \N_0$, whenever
\be \label{Condition}
	 [w(i)\ldots w(i+l)] = [w(i+s)\ldots w(i+s+l)]	
\ee
for some \emph{shift} $s \in \N$, then $s > \varphi(l)$. If $w$ is $\varphi$-aperiodic at every time $i\in \Z$ it is called $\varphi$-aperiodic.
\end{definition}

A $\varphi$-aperiodic word $w \in\Sigma$ is $F$-aperiodic for the following function $F$.

\begin{proposition} \label{EquiDef}
A $\varphi$-aperiodic word $w \in \Sigma$  is $F$-aperiodic for $F(\e)=\varphi(- 2 \lceil \log_2(\e) \rceil )$. 
Conversely, an $F$-aperiodic word $w$ is $ \varphi$-aperiodic for $\varphi(l)=F(2^{-(l/2-1)})$. 
\end{proposition}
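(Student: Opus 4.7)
The plan is to translate between two descriptions of closeness of shifted words. The metric $\bar d$ on $\Sigma$ records how long two words agree on a \emph{symmetric} window around position $0$, while Definition~\ref{AperiodicSequences} uses agreement on a \emph{forward} (one-sided) window. The key fact I would use is $\bar d(T^i w, T^{i+s} w) = 2^{-a}$ where $a$ is the largest nonnegative integer with $w(i+j) = w(i+s+j)$ for all $-a \le j \le a$; equivalently, $w$ agrees with its $s$-shift on the window $[i-a,\, i+a]$ of length $2a+1$. The two flavours of window therefore differ only by a re-centering shift, and converting between them is the entire content of the proposition.

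For the forward implication ($\varphi$-aperiodic $\Rightarrow$ $F$-aperiodic), I would fix $i\in \Z$, $s\geq 1$ and $\varepsilon>0$ with $\bar d(T^i w, T^{i+s} w) < \varepsilon$, and let $a$ be the symmetric agreement radius. From $2^{-a} < \varepsilon$ and the integrality of $a$, one extracts $a \geq -\lceil \log_2 \varepsilon \rceil + 1$. Reinterpreting the symmetric window as a forward window of length $2a+1$ starting at $i - a$, I would apply the $\varphi$-aperiodicity of $w$ at time $i - a$ with length $l = 2a$ to obtain $s > \varphi(2a)$, and then use monotonicity of $\varphi$ together with $2a \geq -2\lceil \log_2 \varepsilon \rceil$ to conclude $s > \varphi(-2\lceil \log_2 \varepsilon \rceil) = F(\varepsilon)$.

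For the converse, I would do the reverse re-centering. Given a forward agreement window of length $l+1$ starting at $i$, set $i' = i + \lfloor l/2 \rfloor$. Then $T^{i'} w$ and $T^{i'+s} w$ agree at all positions in $\{-\lfloor l/2 \rfloor, \ldots, \lceil l/2 \rceil\}$, so $a(T^{i'} w, T^{i'+s} w) \geq \lfloor l/2 \rfloor$ and hence $\bar d(T^{i'}w, T^{i'+s}w) \leq 2^{-\lfloor l/2 \rfloor} < 2^{-(l/2 - 1)}$, using $\lfloor l/2 \rfloor > l/2 - 1$. Taking $\varepsilon = 2^{-(l/2-1)}$ and invoking $F$-aperiodicity of $w$ at time $i'$ yields $s > F(2^{-(l/2-1)}) = \varphi(l)$.

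There is no deep obstacle: the proposition is a two-sided dictionary between asymmetric and symmetric windows, and both directions reduce to a single re-centering trick. The only delicacy is the bookkeeping with $\lfloor \cdot \rfloor$ and $\lceil \cdot \rceil$ needed to land on exactly the expressions $-2 \lceil \log_2 \varepsilon \rceil$ and $l/2 - 1$ that appear in the statement; slackening this bookkeeping would produce slightly weaker but essentially equivalent exponents.
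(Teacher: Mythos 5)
Your proposal is correct and follows essentially the same route as the paper: both directions are exactly the re-centering between the one-sided agreement window of Definition \ref{AperiodicSequences} and the symmetric window encoded by $\bar d$, combined with monotonicity of $\varphi$ and $F$. Your bookkeeping with $a$, $\lfloor l/2\rfloor$ and $\lceil\log_2\e\rceil$ is in fact a bit more careful than the paper's, but the argument is the same.
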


\begin{proof}
Let $i\in \Z$ and $s \in \N$.
For every $l\in \N_0$ such that $\bar d(T^{i}w, T^{i+s}w) \leq 2^{-l}$ we have $[w(i-l)\ldots w(i+l)]=[w(i-l+s)\ldots w(i+s+l)]$. Thus, for  $2^{-l}<\e \leq 2^{-(l-1)}$,
\be \nonumber
	 s> \varphi(2l)  = \varphi( - 2 \lceil \log_2(\e) \rceil ) = F(\e).
\ee 
Since $F(\bar \e) \leq F(\e)$ for $\bar \e \geq \e$, the first implication follows.

Conversely, if $w$ is $F$-aperiodic,  assume that $[w(i)\ldots w(i+l)]=[w(i+s)\ldots w(i+s+l)]$ for $s\in \N$, $l\in \N_0$ and let $\bar l= l/2$ if $l$ is even and $\bar l=(l-1)/2$ if $l$ is odd.
Hence, $\bar d(T^{i+\bar l}w, T^{i+\bar l +s}w) \leq 2^{-\bar l}$ and for every $2^{-\bar l}<\e\leq 2^{-(\bar l-1)}$ we have
\be \nonumber
	s>F(\e) \geq  F(2^{-(\bar l-1}) \geq  F(2^{-(l-3)/2}) =   \varphi(l).
\ee
This finishes the proof.
\end{proof}

If a $\varphi$-aperiodic  word contains a periodic subword of infinite length then the function $\varphi$ is bounded, whereas if a word is $\varphi$-aperiodic for an unbounded function, the word must be aperiodic.
We want to give some examples in order to make the definition more familiar, among them the prominent Morse-Thue-sequence:

\begin{example}
\emph{
First, let $a$, $b\in \cal{A}$. One checks that the (non-recurrent) words $w_1=\ldots bbbaaa\ldots $ and $w_2=..abaabaaabaaaab\ldots $  are $\varphi$-aperiodic  only for a function $\varphi$ such that $1=s>\varphi(l)$ for all $l\in \N_0$.
Both, the orbits of $w_1$ and $w_2$, come closer and closer to the periodic word $\ldots aaa\ldots $ with respect to the metric $\bar d$.
This is not the case for $\varphi$-aperiodic words when $\varphi$ is unbounded; see Proposition \ref{PeriodicDistance}.
}

\emph{
Consider the Morse-Thue recurrent sequence $w\in \{0,1\}^{\Z}$ which is determined as follows:
Let $a_0=0$, $b_0=1$. Then for $n\in \N_0$, let $a_{n+1}=a_nb_n$ and $b_{n+1}=b_na_n$ be finite words of length $2^{n+1}-1$.
Then $w$ is defined such that it satisfies  $[w(0)\ldots w(2^{n}-2)] = a_n$ and $[w(-n)]=[w(n-1)]$ for every $n\in \N$.
In particular, $w$ contains the sub words $a_{n+2} = a_nb_nb_na_n$. Hence for every length $l=2^n-1$, $w$ contains subwords of the form $WW$ where $W$ has length $l$. A function $\varphi$ such that $w$ is $\varphi$-aperiodic must therefore be bounded by $\varphi(2^n-1) \leq 2^n-1$ for every $n\in\N$.
On the other hand there are no sub words of the form $WWa$ where $a$ is the first letter of a sub word $W$ (see \cite{Morse}).
In other words, $w$ is overlap-free (which means that there are no sub words of the form $aWaWa$ for a finite sub word $W$ and a letter $a$), 
from which follows that there are even no sub words of the form $wWwWw$ for $w$ and $W$ finite subwords. Hence we may choose $\varphi(l) \geq l$.
We conclude that $w$ is at least $\varphi$-aperiodic for the function $\varphi(l)=l$, $l\in\N_0$.
}
\end{example}

The example shows that the set of $\varphi$-aperiodic words $\cal{F}_T= \cal{F}_T(\varphi)$ is nonempty for the unbounded function $\varphi (l) = l$ and moreover, the Morse-Thue sequence gives an explicit example of such a word.
However, let $a\in \cal{A}$ such that $\mu_{\cal{A}}(\{a\})>0$ and let $w=\ldots aaa\ldots $ be a  periodic word which is of systolic period $1$. Moreover, $\mu$ is positive on the critical neighborhood of $w$ and hence by Lemma \ref{Measure}, $\cal{F}_T$ is of zero $\mu$-measure unless $\varphi$ is strictly bounded by $1$.

Our main result for sequences is the following.  It will be proved in Section \ref{Paragraph2}.

\begin{theorem} \label{TheoremSequence}
Let  $\varphi : \N_0 \to [0, \infty)$  be a  non-decreasing unbounded function such that
there exists $c\in(1,k)$ satisfying
\be \label{B}
  k -  \lfloor \varphi(0) \rfloor - \sum_{l=1}^{\infty} \frac{ \lfloor \varphi(l) \rfloor - \lfloor \varphi(l-1) \rfloor}{c^l} \geq c,
\ee
where $\lfloor \cdot \rfloor$ denotes the integer part.
Then there exists  a $\varphi$-aperiodic word in $ \Sigma$. 
\end{theorem}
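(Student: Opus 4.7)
The plan is to construct admissible finite words of arbitrary length by an inductive one-letter extension, then pass to a bi-infinite $\varphi$-aperiodic element of $\Sigma$ via the compactness of $(\Sigma,\bar d)$. Call $w\in\mathcal{A}^n$ \emph{admissible} if \eqref{Condition} holds for every triple $(i,s,l)$ whose two length-$(l+1)$ subwords lie entirely inside $w$, and write $V_n$ for the set of admissible words of length $n$. I will show by induction that $V_n\neq\emptyset$ for every $n$, so that centring an admissible word of length $2n+1$ at the origin (extended arbitrarily elsewhere) and extracting a limit in $(\Sigma,\bar d)$ yields a bi-infinite $\varphi$-aperiodic word.

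The core of the argument is an extension statement aiming at $|V_{n+1}|\geq c\,|V_n|$, which in particular gives $V_n\neq\emptyset$ and even $|V_n|\geq c^n$. To identify the forbidden letters at the right end of $w\in V_n$, I introduce, for each shift $s\in\{1,\dots,n\}$, the maximal suffix match
\[
L_w(s)\;=\;\max\{l\geq 0 : w(n-1-j)=w(n-1-s-j)\text{ for }0\leq j\leq l-1\},
\]
and observe that a letter $a$ makes $wa\notin V_{n+1}$ iff $a=w(n-s)$ for some $s$ with $s\leq\varphi(L_w(s))$. Admissibility of $w$ together with the monotonicity of $\varphi$ then forces $L_w(s)=\psi(s)$ for every such forbidden $s$, where $\psi(s):=\min\{l\in\N_0:\varphi(l)\geq s\}$: the lower bound $L_w(s)\geq\psi(s)$ is immediate from $s\leq\varphi(L_w(s))$, while $L_w(s)\geq\psi(s)+1$ would produce a length-$(\psi(s)+1)$ repetition at shift $s\leq\varphi(\psi(s))$, contradicting admissibility. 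In particular, the "type-$l$" forbidden shifts lie in $(\varphi(l-1),\varphi(l)]$ (with $\varphi(-1):=0$) and number at most $\lfloor\varphi(l)\rfloor-\lfloor\varphi(l-1)\rfloor$.

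The hypothesis \eqref{B} rewrites as
\[
\lfloor\varphi(0)\rfloor+\sum_{l\geq 1}\frac{\lfloor\varphi(l)\rfloor-\lfloor\varphi(l-1)\rfloor}{c^l}\;=\;\sum_{s\geq 1}c^{-\psi(s)}\;\leq\;k-c,
\]
and the heart of the proof is to convert this weighted bound into the averaged estimate $\sum_{w\in V_n}|F(w)|\leq (k-c)|V_n|$ (with $F(w)$ the set of forbidden letters), which immediately gives $|V_{n+1}|=k|V_n|-\sum_w|F(w)|\geq c|V_n|$. The expected mechanism is that for each shift $s$ the event $\{L_w(s)=\psi(s)\}$ imposes $\psi(s)$ coincidences on the letters of $w$, so the fraction of admissible $w\in V_n$ for which $s$ is forbidden should decay like $c^{-\psi(s)}$, matching exactly the weight in the sum. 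Summing over $s$ then produces the desired bound, and König's lemma on the prefix tree of $\bigcup_n V_n$ (or a direct diagonal extraction in $(\Sigma,\bar d)$) completes the construction of a bi-infinite $\varphi$-aperiodic word.

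The main obstacle is precisely this weighted-to-unweighted conversion. A naive enumeration of forbidden shifts at a single $w$ gives $\sum_l(\lfloor\varphi(l)\rfloor-\lfloor\varphi(l-1)\rfloor)$, which may diverge even when \eqref{B} holds, so the geometric weight $c^{-l}$ cannot come from simple combinatorics at one word — it must be produced intrinsically by the recursion or by exploiting the global structure of admissible words. The plausible source is that shifts $s\in(\varphi(l-1),\varphi(l)]$ realising $L_w(s)=l$ correspond to occurrences of a fixed length-$l$ suffix pattern at pairwise distances strictly greater than $\varphi(l-1)$, and controlling how often such occurrences can be packed inside admissible $w$'s of length $n$ is precisely what should inject the factor $c^{-l}$ needed to match \eqref{B}.
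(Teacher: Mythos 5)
Your setup (admissible finite words $V_n$, the suffix-match function $L_w(s)$, the identification $L_w(s)=\ell(s)$ for forbidden shifts, the rewriting of \eqref{B} as $\sum_{s\geq 1}c^{-\ell(s)}\leq k-c$, and the final compactness/shift-accumulation step) is sound and follows the same skeleton as the paper's argument. But there is a genuine gap exactly where you flag it: you never prove the averaged estimate $\sum_{w\in V_n}\lvert F(w)\rvert\leq (k-c)\lvert V_n\rvert$, i.e.\ the per-shift statement that the number of admissible $w\in V_n$ for which a given shift $s$ is forbidden is at most $c^{-\ell(s)}\lvert V_n\rvert$. Your heuristic that the event imposes $\ell(s)$ letter coincidences does not by itself produce the factor $c^{-\ell(s)}$: coincidences among letters of an admissible word are not independent, $\lvert V_n\rvert$ is not $k^n$, and you explicitly leave the ``weighted-to-unweighted conversion'' open. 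Without that estimate the recursion $\lvert V_{n+1}\rvert\geq c\,\lvert V_n\rvert$ is unproved, and that recursion is the whole theorem.

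The paper closes this gap by a rigidity-plus-induction argument rather than by counting coincidences. For a condition with shift $s$ active at length $m+1$ (so $i+s+\ell(s)=m+1$), any word of length $m+1$ whose length-$m$ prefix is good but which violates this condition is \emph{completely determined} by its prefix of length $i+s-1=m-\ell(s)$: the violated condition forces $[w(i+s)\ldots w(m+1)]$ to be a copy of $[w(i)\ldots w(i+\ell(s))]$ when $s>\ell(s)$, and when $s\leq\ell(s)$ it forces an $s$-periodic pattern, again determined by the prefix (this is the Claim inside Lemma \ref{Iteration}). Hence the number of bad extensions attached to this shift is at most $\lvert\mathcal{W}^g(m-\ell(s))\rvert$, since prefixes of good words are good. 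The geometric weight then comes from the induction hypothesis itself: assuming $\lvert\mathcal{W}^g(n)\rvert\geq c\,\lvert\mathcal{W}^g(n-1)\rvert$ for all $n\leq m$ gives $\lvert\mathcal{W}^g(m-\ell(s))\rvert\leq c^{-\ell(s)}\lvert\mathcal{W}^g(m)\rvert$, and summing over shifts and invoking \eqref{B} yields $\lvert\mathcal{W}^g(m+1)\rvert\geq c\,\lvert\mathcal{W}^g(m)\rvert$, closing the induction (Lemmas \ref{CountConditions} and \ref{AsymptoticGrowth}). This determination-by-prefix step, run simultaneously with the growth induction, is precisely the ingredient missing from your proposal.
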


\begin{remark} The condition is satisfied for the following set of parameters:\\
$(1)$ $k\geq 4$, then $\varphi(l)=l$ satisfies \eqref{B}  for $c=2$, \\
$(2)$ $k\geq 5$, then $\varphi(l)=2^l$ satisfies \eqref{B}  for $c=3$, \\
$(3)$ $k\geq 2$, $0<\delta<1$ and $k^{\delta} < c < k$, then there exists $l_0= l_0(k, \delta,c) \in \N_0$ such that 
\be \label{LargeScalePhi}
	\varphi(l)=\begin{cases}
 	 0,  & \text{for } l \leq l_0 \\
 	k^{\delta l},  &\text{for } l>l_0
	\end{cases}
\ee
satisfies  \eqref{B}.
\end{remark}

Note that if a word $w$ is $\varphi$-aperiodic then $R_w(l) > \varphi(l)$ for every $l\in \N_0$ where $R_w$ is the recurrence time introduced in Paragraph $1$.
Theorem \ref{MT1} is hence a corollary of Theorem \ref{TheoremSequence}.

\begin{proof}[Proof of Theorem \ref{MT1}.]
By condition \eqref{AsymptoticCondition}, for every $\varepsilon_0>0$ there exists $l_1=l_1(\varepsilon_0)\in \N$ such that
for all $l\geq l_1$,
\be \nonumber
	\frac{1}{ l }\ln(\varphi(l)) \leq \delta\ln(k)(1+ \varepsilon_0).
\ee
Since $\delta<1$ we let $\e_0>0$ such that $\tilde\delta = (1+\e_0)\delta <1$. 
Then, $\varphi(l) \leq k^{\tilde\delta l}$ for $l\geq l_1$.
If we take $c:= (k - k^{\tilde\delta})/2$ then by  \eqref{LargeScalePhi} there exists $l_2=l_2(k,\tilde\delta)$ such that condition $\eqref{B}$ is satisfied for the function $\bar \varphi(l) := k^{\tilde\delta l}$ for $l> l_2$ and  $\bar \varphi(l)=0$ for  $l\leq l_2$, $l\in \N_0$.  
Theorem \ref{TheoremSequence}  implies the existence of a $\bar \varphi$-aperiodic word $w\in \Sigma$.
Thus, setting $l_0:= \max\{l_1,l_2\}+1$, we have that $\bar \varphi(l)\geq  \varphi(l)$ for all $l\geq l_0$
and the claim follows.
\end{proof}

\begin{remark}
The critical function $\varphi$  for which $\varphi$-aperiodic words cannot exist  
is the function $\varphi(l) = k^{l+1}$. 
The critical exponent  $\ln (k)$ equals the topological entropy of the system $(\Sigma, \bar d, T)$ (see \cite{Walters}) and is optimal. 
To see that  there exists no $w\in \Sigma$ which is $\varphi$-aperiodic for a function $\varphi$ such that $\varphi(l)\geq k^{l+1}-1$ for some $l\in \N_0$, fix a subword $[w(1)\ldots w(1+l)]$ of any $w\in\Sigma$.
Inductively one shows that at each step $1\leq s\leq \varphi(l)$ one has at most $k^{l+1}-s$ possibilities to choose a sub word $[w(1+s)\ldots w(1+s+l)]$ such that $w$ stays $\varphi$-aperiodic. Then, at step $s=k^{l+1}$, there is no choice left such that $w$ is $\varphi$-aperiodic.
\end{remark}

\begin{remark} 
Let  $\Sigma^+(m)= \{w : \{1,\ldots ,m\} \to \mathcal{A}\}$ be the set of words of length $m$ in $\mathcal{A}$ and 
$\mathcal{W}^g(m)\subset \Sigma^+(m)$ be the set of \emph{good} words of length $m$ which satisfy \eqref{Condition} for all $i,s \in \N$ and $l \in \N_0$  such that $i+s+l\leq m$. 
If $\varphi$ satisfies  \eqref{B} with respect to the parameter $c>1$ we will see in the proof of Theorem \ref{TheoremSequence} (see Lemma \ref{AsymptoticGrowth}) that the good words $\mathcal{W}^g(m)$ increase in $m$ by the factor $c$.
Thus,  $\lvert \mathcal{W}^g(m) \rvert \geq  c^m$ which 
is a  lower bound on the asymptotic growth of $\lvert \mathcal{W}^g(m) \rvert$, where $\lvert \cdot \rvert$ denotes its cardinality.
\end{remark}

We may reformulate the critical neighborhood of a periodic point given in \eqref{CriticalNbhd} to the setting of $\varphi$-aperiodicity.
Moreover, since $\cal{P}_T$ is dense in $\Sigma$ we can also give a sufficient condition on $\varphi$-aperiodicity in terms of periodic words.
Therefore, for a non-decreasing unbounded function $ \varphi : \N_0 \to [0, \infty)$,
we define a  discrete form of a right-inverse for $\varphi$ by  $\ell : \N \to \N_0$,
\be \label{DefQuantile}
	 \ell(s) = \min\{ j \in \N_0 : \varphi(j) \geq s\},
\ee
which is also  non-decreasing and unbounded.

\begin{proposition} \label{PeriodicDistance}
Let $\varphi:\N_0 \to [0, \infty)$ be a non-decreasing unbounded function.
If $w\in \Sigma$ is $\varphi$-aperiodic, then for every periodic word $\bar w \in \Sigma$ of period $s$ and  for all $i\in \Z$ we have 
\be\nonumber
	\bar d(T^iw, \bar w)> 2^{- (s+\ell(s))/2}.
\ee
Conversely, if $\bar d(T^iw, \bar w)>2^{-(s+\ell(s)-1)/2}$ for every periodic word $\bar w$ of period $s$ and all $i \in \Z$, then $w$ is $\varphi$-aperiodic.
\end{proposition}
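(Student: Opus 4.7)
Both implications are proved by contrapositive, hinging on the fact that by definition of $\ell$ one has $\varphi(\ell(s))\geq s$ for every $s\in\N$.

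For the forward direction, suppose for contradiction that $\bar d(T^iw,\bar w)\leq 2^{-(s+\ell(s))/2}$ for some $s$-periodic $\bar w$ and some $i\in\Z$. Setting $m=\lceil (s+\ell(s))/2\rceil$, the definition of $\bar d$ forces $w(i+j)=\bar w(j)$ for all $|j|\leq m$. Using $\bar w(j+s)=\bar w(j)$ and writing $l=\ell(s)$, for $0\leq k\leq l$ one obtains
\[
w(i-m+k)=\bar w(-m+k)=\bar w(-m+s+k)=w(i-m+s+k),
\]
where the last equality needs $|-m+s+k|\leq m$, i.e.\ $k\leq 2m-s$; this holds because $2m-s\geq \ell(s)=l$. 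Hence $[w(i-m)\dots w(i-m+l)]=[w(i-m+s)\dots w(i-m+s+l)]$, so $\varphi$-aperiodicity at time $i-m$ forces $s>\varphi(l)=\varphi(\ell(s))\geq s$, a contradiction.

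For the converse, assume $w$ is not $\varphi$-aperiodic and choose $i_0\in\Z$, $l_0\in\N_0$ and $s_0\in\N$ with $[w(i_0)\dots w(i_0+l_0)]=[w(i_0+s_0)\dots w(i_0+s_0+l_0)]$ and $s_0\leq\varphi(l_0)$; the latter yields $\ell(s_0)\leq l_0$. The overlap equality reads $w(i_0+k)=w(i_0+k+s_0)$ for $0\leq k\leq l_0$, so iterating the shift shows that on the extended range $0\leq j\leq L:=l_0+s_0$ the letter $w(i_0+j)$ depends only on $j\bmod s_0$. Define the $s_0$-periodic word $\bar w$ by $\bar w(j)=w(i_0+(j\bmod s_0))$, so that $T^{i_0}w$ and $\bar w$ agree on $[0,L]$. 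Re-centering by setting $i=i_0+\lfloor L/2\rfloor$ and $\bar w'=T^{\lfloor L/2\rfloor}\bar w$ (still $s_0$-periodic), we see that $T^iw$ and $\bar w'$ agree on the symmetric window $|j|\leq \lfloor L/2\rfloor$, hence
\[
\bar d(T^iw,\bar w')\leq 2^{-\lfloor L/2\rfloor}\leq 2^{-(L-1)/2}=2^{-(l_0+s_0-1)/2}\leq 2^{-(s_0+\ell(s_0)-1)/2},
\]
contradicting the hypothesis.

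The only subtlety is the bookkeeping: both inequalities rely on converting a one-sided agreement range into a symmetric window centered at $0$, which is what produces the factor $1/2$ in the exponent in both directions; the extra $-1$ in the exponent of the converse simply accounts for the possibility that $L$ is odd, which forces a small asymmetry when centering.
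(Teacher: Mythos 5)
Your proof is correct and follows essentially the same route as the paper: in the forward direction you turn the agreement window with the periodic word into a self-overlap of $w$ of length $\ell(s)$ and invoke $\varphi(\ell(s))\geq s$, and in the converse you build the $s_0$-periodic word from the overlap and re-center to the middle of the window, exactly as the paper does (your contrapositive phrasing and the $\lfloor L/2\rfloor$ bookkeeping versus the paper's parity split are only cosmetic differences).
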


\begin{proof} If $w$ is $\varphi$-aperiodic, $w$ is aperiodic and there exists $m\in \N_0$ such that \\$\bar d(T^iw, \bar w) =2^{-m}$ where we assume $2m\geq s$ (otherwise the first statement follows).
Hence, $[w(i-m)\ldots w(i+m)]=[\bar w(-m)\ldots \bar w(m)]$ and we see that $[w(i-m)\ldots w(i-m+s+(2m-s)]=[w(i-m+s)\ldots w(i+m)]$.
Thus, $s>\varphi(2m-s)$ and $m< (s+\ell(s))/2$ from \eqref{Properties}.

Conversely, assume that $[w(i)\ldots w(i+l)]=[w(i+s)\ldots w(i+s+l)]$ for $s\in \N$, $l\in \N_0$ and let $\bar l= (s+l)/2$ if $s+l$ even and $\bar l=(s+l-1)/2$ if $s+l$ is odd.
Moreover, let $\bar w$ be the periodic word of period $s$ such that $[\bar w(i)\ldots \bar w(i+s-1)]=[w(i)\ldots w(i+s-1)]$.
Thus, $2^{-\bar l}\geq d(T^{i+\bar l}w, T^{i+\bar l}\bar w)> 2^{-(s+\ell(s)-1)/2}$ and we see that $s+\ell(s)-1> 2\bar l \geq s+l-1$.
Hence, $l<\ell(s)$ and from \eqref{Properties} we have $s> \varphi(l)$.
\end{proof}

\begin{remark} Consider the
overlap-free recurrence time $\tilde R^0_w: \N_0 \to \N$ of the initial sub word,
\be  \nonumber
	\tilde R^0_w(l)= \min\{s > l : [w(s)\ldots w(s+l)]= [w(0)\ldots w(l)]\}.
\ee
Clearly, $R_w(l) \leq R^0_w(l) \leq \tilde R^0_w(l)$ for $l\in \N_0$.
Then it follows from  \cite{Weiss} that, since the Bernoulli-shift is ergodic, for $\mu$-almost all $w\in \Sigma$ the limit
\be  \nonumber
	\lim_{l\to \infty} \frac{\ln \tilde R^0_w(l)}{l}
\ee
exists and equals the measure-entropy $h_{\mu}(T)$. 
\end{remark}


\section{Geodesic flow on hyperbolic manifolds}

Let $M$ be a closed $n$-dimensional hyperbolic manifold, that is a compact connected Riemannian manifold without boundary of constant negative curvature $-1$, where $n\geq 2$. 
We denote by $d$ the distance function on $M$ and by $i_M>0$ the injectivity radius.

Let $SM$ be the unit tangent bundle  of $M$ and $d^S$ the Sasaki-distance function on $SM$.
For $v\in SM$ let $\gamma_v : \R \to M$ be the unit speed geodesic such that $\gamma'_v(0)=v$. 
The geodesic flow $\phi^t : SM \to SM$, $t\in \R$, acts on the compact metric space $(SM,d^S)$ by diffeomorphisms, where $\phi^tv = \gamma'_v(t)$. For details and background we refer to \cite{Eberlein}.

A vector $v\in SM$ is \emph{periodic}, if there exists a $t>0$ such that $\phi^tv=v$ and $v$ is \emph{recurrent} if for every $\e>0$ there exists $s>0$ such that $d^S(\phi^sv,v)<\e$.
Denote by $\cal{P}_{\phi}$ and $\cal{R}_{\phi}$ the flow-invariant sets of periodic respectively of recurrent vectors.
Thus if $v\in \cal{R}_{\phi}$ then for a given $t \in \R$, $\e>0$, there exists $s=s(t,\e)$ such that $d^S(\phi^{t+s}v,\phi^tv)<\e$.

We now adjust the definitions of $F$-aperiodic and $\varphi$-aperiodic points to the setting of the geodesic flow.

\begin{definition} Let $F:(0, \infty) \to [0, \infty)$ be a non-increasing function and $s_0>0$ be a constant, called the \emph{minimal shift}.
A vector $v\in SM$ is called \emph{$F$-aperiodic} (with minimal shift  $s_0$) at $t_0\in \R$ if for every $\e>0$, whenever
\be \nonumber
	d^S(\phi^{t_0}v, \phi^{t_0+s}v)<\e
\ee
for some shift $s>s_0$, then $s>F(\e)$. If $v$ is $F$-aperiodic at every time $t_0$ then $v$ is called \emph{$F$-aperiodic} (with minimal shift $s_0$).
\end{definition}

Note that in contrast to the discrete setting in Section $2$ (where $s\in \N$, i.e. $s\geq 1$) we now have to specify the additional parameter $s_0$, since $d^S(\phi^{t_0}v, \phi^{t_0+s}v)=s$ for $s$ small enough.

We also have to generalize the notion of $\varphi$-aperiodicity. All geodesics will be assumed to be unit speed.
Note that as in the case of the Bernoulli-shift, two vectors in the Sasaki-distance are very close if and only if the trajectories of the corresponding geodesics are close (in the Riemannian distance) to each other for a long time. Thus we may reformulate $\varphi$-aperiodicity in terms of the \emph{fellow traveller length}.

Herefore we introduce a second parameter, the \emph{distance constant} $\e_0>0$.

\begin{definition} \label{ContinuousDef} 
Let $\varphi : [0, \infty) \to [0, \infty)$ be a non-decreasing function, let $0<\varepsilon_0<i_M$  
and $s_0\geq\e_0$.
A  geodesic  $\gamma : \R \to M$  is called \emph{$\varphi$-aperiodic}  at \emph{time} $t_0\in \R$ if for every \emph{length} $l >\e_0 $, whenever
\be \nonumber
	d(\gamma(t_0 + t) , \gamma(t_0 +s +t)) < \varepsilon_0 \ \ \ \ \text{ for all } 0\leq t \leq l
\ee
for some \emph{shift} $s>s_0$, then $s > \varphi(l)$.
If $\gamma$ is $\varphi$-aperiodic at every time $t_0$, it is called \emph{$\varphi$-aperiodic} (with parameters $(s_0,\e_0)$).
\end{definition}

The geodesic flow on compact hyperbolic manifolds is ergodic with respect to the Liouville measure $\mu$ (on the Borel-$\sigma$-algebra of $SM$). 
A systole of $M$ has length $2i_M$ which equals the systolic period. For a non-decreasing function $\varphi$ let  $\cal{F}_{\phi}$ be the set of $\varphi$-aperiodic geodesics (with respect to  $(s_0,\e_0)$), which is invariant under the geodesic flow $\phi^t$.
Since $\mu$ is positive on open sets, one can show as in Lemma \ref{Measure}, 
that the set $\cal{F}_{\phi}$ is of  zero $\mu$-measure if and only if $\varphi$ is not bounded by either $s_0$ or $2i_M-\varepsilon_0$.

The main result of this section is the following, which will be proved in the Section \ref{Paragraph2}.

\begin{theorem} \label{MainThm}
Assume that $i_M > \ln(2) $ and let $\e_0>0$ such that $\ln(2) + \e_0 <i_M$. Let 
\be \nonumber
	\varphi_{\delta}(l) = e^{\delta(n-1)l },
\ee
where $0<\delta<1$.
Then there exists  a minimal length $l_0=l_0(\delta, i_M,n,\e_0)$ and a geodesic $\gamma: \R \to M$ which satisfies for every $t_0\in\R$ and all $l \geq l_0 $, whenever
\be \label{MTC}
	d(\gamma(t_0 + t) , \gamma(t_0 +s +t) < \varepsilon_0 \ \ \ \ \text{ for all } 0\leq t \leq l
\ee
for some shift $s> \e_0$, then $s > \varphi_{\delta}(l)$.
\end{theorem}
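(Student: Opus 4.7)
The plan is to reduce Theorem \ref{MainThm} to a symbolic counting problem of the same flavor as Theorem \ref{TheoremSequence}, by discretizing the geodesic flow and replacing the combinatorial union bound used there by a hyperbolic tube-volume estimate.

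First I would fix a small time step $\tau>0$ and a finite Borel cover $\{U_a\}_{a\in\mathcal A}$ of $SM$ of diameter less than some $\eta>0$. The pair $(\tau,\eta)$ is tuned so that two orbits of $\phi^\tau$ whose $\mathcal A$-codings agree on a block of length $m$ produce geodesic segments that lie pairwise within distance $\epsilon_0/2$ on $[0,m\tau]$, while orbits with different codings separate by at least some fixed $\eta'>0$ within time $\tau$; this is the standard consequence of uniform hyperbolicity. The hypothesis $i_M>\ln(2)+\epsilon_0$ enters here to guarantee that all the $\epsilon_0$-balls we need lift isometrically to $\H^n$, so that volume estimates can be carried out without boundary effects. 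With this setup a $\varphi_\delta$-aperiodic coding gives, after filling in between discrete times, a $\varphi_\delta$-aperiodic geodesic in the sense of \eqref{MTC}.

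Next, I would prove the geometric analogue of the growth step hidden behind Theorem \ref{TheoremSequence}. Define $\mathcal W^g(m)$ to be the set of words of length $m$ in $\mathcal A$ that (i) are realized by some orbit segment of $\phi^\tau$ and (ii) satisfy the $\varphi_\delta$-aperiodicity condition at every admissible time and every length $l$ with $l_0\le l\le m\tau$. By the entropy of the geodesic flow, the unconstrained count in (i) grows like $e^{(n-1)m\tau}$. For (ii), the bad extensions of a good word of length $m$ correspond to orbits $\phi^s v$ at shift $s\in(\epsilon_0,\varphi_\delta(l)]$ that $\epsilon_0$-fellow-travel the current segment over a length $l$; in $\H^n$ such shifts occupy an $\epsilon_0$-tube around a geodesic segment of length $l$, whose volume is bounded by $C\,l\,e^{-(n-1)l}$ relative to an ambient ball. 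Summing over $s$ up to $e^{\delta(n-1)l}$ and over $l$ gives a total bad contribution of order $\sum_l e^{\delta(n-1)l}\cdot e^{-(n-1)l}=\sum_l e^{-(1-\delta)(n-1)l}$, which converges because $\delta<1$. Choosing $l_0$ large enough to make this sum much smaller than the unconstrained count yields a growth factor $c>1$ for $|\mathcal W^g(m)|$, exactly as in the condition \eqref{B}.

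Finally, with $|\mathcal W^g(m)|\to\infty$, compactness of $SM$ and of $\mathcal A^{\mathbb Z}$ (or a diagonal K\"onig-type extraction) produces an infinite two-sided coding realized by an actual orbit of $\phi^\tau$; the continuous-time trajectory of that orbit is the required geodesic $\gamma$, and the $\varphi_\delta$-aperiodicity at arbitrary $t_0\in\R$ and $l\ge l_0$ follows from the discrete condition by the choice of $(\tau,\eta)$. The main obstacle is the tube-volume bookkeeping in the growth step: unlike in the symbolic setting where the shift $s$ is an integer counted by a direct combinatorial bound, here $s$ is continuous and the ``forbidden'' region for the next symbol is a tube in $\H^n$ whose geometry must be controlled uniformly in $l$. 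A secondary, purely technical point is the calibration of $\tau$: it must be small enough that the discrete $\epsilon_0/2$-shadowing implies the continuous $\epsilon_0$-closeness in Definition \ref{ContinuousDef}, yet large enough that the alphabet growth rate $e^{(n-1)\tau}$ strictly dominates the target growth $e^{\delta(n-1)\tau}$; this is exactly where the slack $\delta<1$ is spent.
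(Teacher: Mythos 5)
There is a genuine gap, and it concerns the range of shifts. The theorem must exclude $\epsilon_0$-fellow-traveling for \emph{every} shift $s>\epsilon_0$, but any discretization scheme -- yours with time step $\tau$, the paper's with step $r_0>\ln 2$ -- only controls shifts above a threshold $s_0$ comparable to (in fact much larger than) the discretization step: the summability condition in the counting step forces the minimal discrete shift $\bar s_0$ to be large, so the resulting continuous statement only covers $s>s_0=(\bar s_0+1)r_0$, while $\epsilon_0$ may be small. Your closing assertion that the continuous aperiodicity ``follows from the discrete condition by the choice of $(\tau,\eta)$'' is false for $s\in(\epsilon_0,s_0]$: a geodesic can spiral many times around a short closed geodesic, producing small-shift self-fellow-traveling that the symbolic coding cannot see. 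The paper closes this gap with a separate geometric argument (Lemma \ref{ClosedGeodesic}): small-shift fellow-traveling over length $l$ forces a nearby closed geodesic of period $p\le s+\epsilon_0/4$; concatenating $N'=\lceil s_0/p\rceil$ periods produces a shift $N'p>s_0$ that fellow-travels for almost the same length, to which the discrete result applies, and unwinding gives $s\gtrsim \varphi_{\tilde\delta}(l)$ up to a constant absorbed by lowering $\delta$. Without some version of this step your argument does not prove the stated theorem.

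A secondary but nontrivial issue is the coding itself. Your inductive growth estimate needs each good word of length $m$ to admit a \emph{uniform} number $k$ of one-step extensions before removing bad ones; an arbitrary finite cover of $SM$ with approximate entropy count $e^{(n-1)m\tau}$ does not provide this without invoking Markov partitions and controlling multiplicative errors, which is where your sketch is thinnest. The paper instead builds an exact full shift on $2^{n-1}$ symbols by nesting cube shadows in horospheres $H_{mr_0}$ based at $\infty$ in $\H^n$: the expansion factor $e^{r_0}>2$ (this, not a ball-lifting issue, is why $i_M>\ln 2$ is assumed) guarantees $2^{n-1}$ disjoint child cubes per parent, and the bad extensions per condition $C_{is}$ are bounded by an explicit constant $\bar c$ using the $2i_M$-separation of $\Gamma$-orbits rather than a tube-volume/entropy heuristic. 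Your volume estimate also has the exponents in the wrong place (the volume of an $\epsilon_0$-tube of length $l$ grows linearly in $l$; the factor $e^{-(n-1)l}$ is a shadow proportion, not a tube volume), though the intended summability $\sum_l e^{-(1-\delta)(n-1)l}<\infty$ does reflect the correct mechanism behind condition \eqref{C}.
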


Note  that for $\e_0=i_M/2$, if a geodesic $\gamma: \R \to M$ satisfies \eqref{MTC} then $R_{\gamma}(l) \geq \varphi_{\delta}(l)$ for all $l\geq l_0$, where $R_{\gamma}$ is the recurrence time introduced in Paragraph $1$.
Theorem \ref{MT} is hence a corollary of Theorem \ref{MainThm}.

\begin{proof}[Proof of Theorem \ref{MT}.]
By \eqref{AsymptoticCondition2}  there exists for every $\tau>0$  some $l_1=l_1(\tau)\geq 0$ such that
for all $l\geq l_1$ we have
\be \nonumber
	\varphi(l) \leq e^{(1+  \tau)(n-1) \delta l}.
\ee

Since $\delta<1$ we let $\tau_0>0$ such that $\bar \delta:=(1+  \tau_0)\delta<1$.
From Theorem \ref{MainThm} for $\e_0=i_M/2$, there exists an $l_2 = l_2(\bar \delta,  i_M,n)$ 
and a geodesic geodesic $\gamma : \R \to M$ such that for every $t_0 \in \R$ and $l \geq l_2$,  whenever
\be \nonumber
	d(\gamma(t_0 + t), \gamma(t_0+s +t) ) < \frac{i_M}{2} \ \ \ \text{ for all } \  0\leq t \leq l,
\ee
for some shift $s > i_M/2$, then  $s > e^{\bar \delta(n-1)l}$. 
If we set $l_0 := \max\{l_1,l_2\}$ then $s>e^{\bar \delta(n-1)l} \geq \varphi(l)$ whenever $l\geq l_0$ and the proof is finished.
\end{proof}

In order to prove Theorem \ref{MainThm} we discretize our geodesics.
Therefore we need a third parameter, the \emph{discretization constant} $r_0>0$.
To a geodesic $\gamma: \R \to M$ we consider the \emph{discrete} geodesic
\be \nonumber
	\bar \gamma: \Z \to M, \ \ \ \ \bar \gamma(i) := \gamma(i \cdot r_0).
\ee

\begin{definition}\emph{(Discrete Definition)} \label{DiscreteDef} 
Let $\bar \varphi : \N_0 \to [0, \infty)$ be a non-decreasing function and let the parameters $(\bar s_0, \bar \e_0, r_0)$ be given where $\bar s_0\in \N_0$, $0<\bar \e_0 < i_M$ and $0<r_0< \bar \e_0$.
A discrete  geodesic  $\bar \gamma : \Z \to M$  is called \emph{$\bar \varphi$-aperiodic} at time  $i\in\Z$ if for $l \in\N$, whenever
\be 
\label{ConditionVarphi}
	d(\bar \gamma( i+j) ,\bar \gamma( i+s+j)) < \bar \varepsilon_0 \ \ \ \ \text{ for all } j \in \{0,\ldots ,l\}
\ee
for some shift $s>\bar s_0$, then $s > \bar \varphi(l)$.
$\bar \gamma$ is called \emph{$\bar \varphi$-aperiodic}  (with parameters $(\bar s_0, \bar \e_0, r_0)$)   if it is $\bar \varphi$-aperiodic at every time $i\in\Z$.
\end{definition}

Note that, given a $\bar \varphi$-aperiodic geodesic $\bar \gamma : \Z \to M$ (with the parameters  $(\bar s_0, \bar \e_0, r_0)$), the corresponding geodesic $\gamma : \R \to M$ is continuously $\varphi$-aperiodic in the following way.

\begin{lemma} \label{ContCond}
For a non-decreasing function $\bar \varphi : [0, \infty) \to [0, \infty)$ and the parameters  $(\bar s_0, \bar \e_0, r_0)$  let 
$ \bar \gamma: \Z \to M$ be a $\bar \varphi \lvert_{\N_0}$-aperiodic geodesic. 
For $r_0 \leq l\in \R$, define
\be \nonumber
	 \varphi (l) := r_0 \cdot \bar \varphi(\frac{l-r_0}{r_0})  -r_0.
\ee
Then $\gamma$ is  $\varphi$-aperiodic with respect to  the minimal shift $s_0= (\bar s_0+1)r_0$  and the distance constant $\varepsilon_0 = \bar \e_0-r_0>0$.

Conversely, if $\gamma: \R \to M$ is $\varphi$-aperiodic with parameters $(s_0, \e_0)$ then for $r_0<\e_0$, let
\be \nonumber
	\bar  \varphi(l):=\varphi(l \cdot r_0) /r_0.
\ee
Then $\bar \gamma: \Z \to M$ is $\bar \varphi$-aperiodic with parameters $(\lceil s_0/r_0 \rceil, \e_0, r_0)$.
\end{lemma}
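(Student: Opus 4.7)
Plan. The lemma is a bridge between the continuous and discrete notions of $\varphi$-aperiodicity via the sampling $\bar\gamma(i) = \gamma(i r_0)$. I will prove each direction by a discretization/continuization argument that exploits the unit-speed property $d(\gamma(a), \gamma(b)) \le |a-b|$ of $\gamma$ together with the $r_0$-slack built into the parameters.

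For the first direction I argue by contradiction: assume $\bar\gamma$ is $\bar\varphi|_{\N_0}$-aperiodic but $\gamma$ fails $\varphi$-aperiodicity at some $t_0 \in \R$, so there exist $l > \e_0$ and $s > s_0 = (\bar s_0+1)r_0$ with continuous fellow-traveller and $s \le \varphi(l)$. Set $i := \lceil t_0/r_0 \rceil$ so that $a := i r_0 - t_0 \in [0, r_0)$, $s' := \lfloor s/r_0 \rfloor$ (which satisfies $s' > \bar s_0$ by the lower bound on $s$), and $L := \lfloor (l-r_0)/r_0 \rfloor$ (so $L r_0 + a \le l$). Writing $t := a + j r_0 \in [0, l]$, the identity $\bar\gamma(i+j) = \gamma(t_0+t)$ combined with the unit-speed bound $d(\gamma(t_0+s+t), \bar\gamma(i+s'+j)) = |s - s' r_0| < r_0$ and the continuous hypothesis gives, via the triangle inequality, $d(\bar\gamma(i+j), \bar\gamma(i+s'+j)) < \e_0 + r_0 = \bar\e_0$ for every $j \in \{0,\dots,L\}$; thus the discrete fellow-traveller condition is in force. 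Discrete aperiodicity then forces $s' > \bar\varphi(L)$, and the $-r_0$-correction built into $\varphi(l) = r_0 \bar\varphi((l-r_0)/r_0) - r_0$ absorbs the loss from rounding $(l-r_0)/r_0$ down to the integer $L$, yielding $s \ge s' r_0 > \varphi(l)$, the desired contradiction.

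For the converse direction I discretize in the reverse direction. The discrete fellow-traveller condition $d(\bar\gamma(i+j), \bar\gamma(i+s+j)) < \bar\e_0 = \e_0$ for $j \in \{0,\dots,L\}$ and integer shift $s > \lceil s_0/r_0 \rceil$ gives exactly the continuous fellow-traveller condition at the sample times $t = j r_0$. To extend it to every $t \in [0, L r_0]$ I invoke the convexity of the distance function between two geodesic arcs in the universal cover $\H^n$: since $\bar\e_0 < i_M$, the closest lifts of consecutive samples coincide, and the distance between the two lifted arcs is a convex function of the parameter, hence bounded on each sub-interval $[j r_0, (j+1) r_0]$ by its maximum at the sample endpoints, which is $< \e_0$. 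The continuous $\varphi$-aperiodicity of $\gamma$, applied at $t_0 = i r_0$ with continuous shift $s r_0 > s_0$ and length $l = L r_0$, then yields $s r_0 > \varphi(L r_0)$, i.e.\ $s > \bar\varphi(L) = \varphi(L r_0)/r_0$.

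The main obstacle is the converse direction: the absence of slack on the distance side ($\bar\e_0 = \e_0$) forbids a naive triangle-inequality extension from the sample times to the full interval, and one must instead exploit the convexity of the distance function along geodesic arcs in the universal cover, which is available thanks to the hyperbolic (in particular non-positively curved) geometry of $M$.
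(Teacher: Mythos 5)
Your converse direction is sound and is essentially the argument the paper intends (the paper only remarks that "the second part follows analogously"): compare the two orbits at the sample times, then use local convexity of the distance function between lifted geodesic arcs in $\H^n$ (available because $\bar\e_0<i_M$) to upgrade the estimate from the sample times to the whole interval, and unwind the definitions. Your first direction also begins exactly as the paper's proof does — discretize the times and spend the slack $r_0=\bar\e_0-\e_0$ on the triangle inequality that absorbs the rounding of the shift — but its final step contains a genuine gap.

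The problem is the direction of the length rounding. You take the discrete length $L=\lfloor(l-r_0)/r_0\rfloor$, obtain $s\ge s'r_0>r_0\,\bar\varphi(L)$ from discrete aperiodicity, and then assert that the $-r_0$ in $\varphi(l)=r_0\,\bar\varphi((l-r_0)/r_0)-r_0$ "absorbs the loss from rounding $(l-r_0)/r_0$ down to $L$". It does not: since $L\le(l-r_0)/r_0$ and $\bar\varphi$ is only assumed non-decreasing, monotonicity gives $\bar\varphi(L)\le\bar\varphi((l-r_0)/r_0)$, which is the wrong inequality, and the deficit $r_0\bigl(\bar\varphi((l-r_0)/r_0)-\bar\varphi(\lfloor(l-r_0)/r_0\rfloor)\bigr)$ is not bounded by $r_0$ — for $\bar\varphi(t)=2^{\delta(n-1)t}$, the function to which the lemma is later applied, it is unbounded. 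The subtractive $-r_0$ in the definition of $\varphi$ is budgeted for the rounding of the \emph{shift}, not of the length. The paper avoids this by taking the discrete length to be $\lfloor l/r_0\rfloor$, an integer that is $\ge(l-r_0)/r_0$, so that monotonicity runs the right way: $\bar\varphi(\lfloor l/r_0\rfloor)\ge\bar\varphi((l-r_0)/r_0)$ and the chain of inequalities closes. The price of that larger discrete length is that at the top index the sampled time $a+jr_0$ may overshoot $l$, so one can no longer verify the discrete fellow-traveller condition at every $j$ directly from the continuous hypothesis as you do; the paper instead checks only the two endpoint indices by the triangle inequality and fills in the intermediate ones by convexity. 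Your argument needs this (or an equivalent) modification before the contradiction $s>\varphi(l)$ actually follows.
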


\begin{proof}
For $t_0 \in \R$, $L  \geq r_0$ and $s > (\bar s_0+1)r_0$ assume that $d(\gamma(t_0 + t), \gamma(t_0+s+t) ) < \varepsilon_0$ for all $0\leq t \leq L$.
If  we set $i:= \lceil \frac{t_0}{r_0} \rceil$ and $i+\bar s:= \lceil \frac{t_0+s}{r_0} \rceil$ whereas $ l := \lfloor \frac{L}{r_0} \rfloor$, we have $i$, $ l \geq 1$ and $\bar s>\bar s_0$.
Then, since $\e_0 = \bar \e_0-r_0<  i_M$ and the distance function is locally convex, one checks by the triangle inequality that $d(\bar \gamma(i), \bar \gamma(i+\bar s) ) <\bar  \varepsilon_0$ and $d(\bar \gamma(i+l), \bar \gamma(i+\bar s+ l) ) < \bar \varepsilon_0$. In particular,
$d(\bar \gamma(i+j), \bar \gamma(i+\bar s +j) ) < \bar \varepsilon_0$ for all $0 \leq j \leq l$. Thus, $\bar s> \bar \varphi( l)$ so that
\be	 \nonumber
	s \geq (\bar s-1)r_0 > (\bar \varphi(l) -1 )r_0 \geq \big(\bar \varphi(\frac{L}{ r_0} -1) -1\big) r_0 =  \varphi(L)
\ee
since $ (l +1)r_0\geq L$.
This finishes the first part of the Lemma. The second part  follows analogously.
\end{proof}

In terms of Lemma \ref{ContCond} we are left with stating the existence theorem for discrete $\bar \varphi$-aperiodic geodesics.
Recall that  for an unbounded function $\bar \varphi$  we  defined its discrete right-inverse $\bar \ell : \N \to \N_0$ in 
\eqref{DefQuantile} which is also non-decreasing and unbounded.

\begin{theorem}  \label{TheoremGeodesic} 
Let $\bar \varphi:\N_0 \to [0,\infty)$ be a non-decreasing, unbounded function.
Assume that $\ln(2) <r_0< \bar \e_0 < i_M$ and $\bar s_0 \in \N_0$ such that for all $l\geq \bar s_0$,
\be \label{D}
\begin{array}{c}
\lfloor \bar \varphi(l) \rfloor > l, \ \ \ \text{ and } \ \ \ \ 
\bar \ell(\bar s_0) \geq 1,
\end{array}
\ee
and moreover, that there exists a constant $c\in(1,2^{n-1})$ such that 
\be \label{C}
  2^{n-1} - \bar c \cdot \sum_{l=\bar \ell(\bar s_0)}^{\infty}   \frac{ \lfloor\bar  \varphi(l) \rfloor - \lfloor\bar \varphi(l-1) \rfloor}{c^l} \geq c,
\ee
where $\bar c$ is an explicit constant depending only on $n$ and $i_M$.
Then  there exist a $\bar \varphi$-aperiodic geodesic $\gamma: \Z \to M$ with the parameters $(\bar s_0, \bar \e_0,r_0)$.
\end{theorem}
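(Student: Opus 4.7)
The strategy is to imitate the combinatorial branching argument used for the Bernoulli shift (Theorem~\ref{TheoremSequence}), with the discrete alphabet of size $k$ replaced by a continuous branching in the universal cover $\mathbb{H}^n$, quantified by a suitable volume measure, and with the entropy $\ln k$ replaced by $n-1$. The role of $c$ is identical: a geometric growth rate that dominates the loss caused by aperiodicity constraints.

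\textbf{Step 1 (Good segments and their measure).}
Lift to the universal cover $\mathbb{H}^n$ and consider discrete geodesic segments $\tilde{\bar\gamma}:\{0,1,\dots,m\}\to \mathbb{H}^n$ with consecutive-vertex distance $r_0$. Call such a segment \emph{good} if its projection to $M$ satisfies \eqref{ConditionVarphi} for every $i,s,l$ with $i+s+l\leq m$ and $s>\bar s_0$. Parametrize segments of length $m$ by their initial unit tangent vector in a fundamental domain of $S\mathbb{H}^n$ and measure them by the Liouville density; write $V(G_m)$ for the volume of good segments of length $m$.

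\textbf{Step 2 (Branching lower bound).}
Show that every good segment of length $m$ admits extensions by one discrete step whose ``new endpoints'' form a set of volume at least $2^{n-1}\,V(G_m)$ inside $G_m$-extensions. This is exactly where $r_0>\ln 2$ enters: in $\mathbb{H}^n$ the $(n{-}1)$-sphere of radius $r_0$ has area $\omega_{n-1}\sinh^{n-1}(r_0)$, and for $r_0>\ln 2$ this exceeds the comparison value $\omega_{n-1}\cdot 2^{n-1}$ used to normalize against the unit-radius ball, giving the expansion factor $2^{n-1}$. This plays the role of the alphabet cardinality $k$ in condition \eqref{B}.

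\textbf{Step 3 (Upper bound on forbidden extensions).}
An extension is \emph{forbidden} if adjoining it creates a new violation of \eqref{ConditionVarphi} at the right endpoint, that is, if there exist $l$ with $\bar\ell(\bar s_0)\leq l\leq m+1$ and $s$ with $\bar s_0<s\leq\lfloor\bar\varphi(l)\rfloor$ such that the terminal subarc of length $l$ lies within $\bar\e_0$ of the subarc shifted by $s$. For each such $(l,s)$, the forbidden new endpoints lie in a small ball of radius $\bar\e_0$ in $\mathbb{H}^n$ around a specific image point; volume comparison in $\mathbb{H}^n$ together with the injectivity bound $\bar\e_0<i_M$ shows that the relative volume of this forbidden set is at most $\bar c\cdot c^{-l}$, where $\bar c=\bar c(n,i_M)$ is absolute. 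The factor $c^{-l}$ arises because the constraint of being $\bar\e_0$-close along the \emph{entire} length $l$ — not merely at one point — forces, via the exponential contraction of fellow-traveling geodesics in negative curvature, the initial branching cone to shrink geometrically with $l$. Grouping the shifts by dyadic crossings of $\lfloor\bar\varphi\rfloor$ and summing, the total bad volume is bounded by
\be \nonumber
\bar c\sum_{l=\bar\ell(\bar s_0)}^{\infty}\frac{\lfloor\bar\varphi(l)\rfloor-\lfloor\bar\varphi(l-1)\rfloor}{c^{l}}\,V(G_m).
\ee
The hypothesis $\lfloor\bar\varphi(l)\rfloor>l$ and $\bar\ell(\bar s_0)\geq 1$ from \eqref{D} ensure that the trivial shift $s=l$ is already suppressed, so only genuinely ``new'' constraints appear in the sum.

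\textbf{Step 4 (Iteration, compactness, two-sidedness).}
Combining Steps 2 and 3 with hypothesis \eqref{C} gives $V(G_{m+1})\geq c\,V(G_m)$, hence $V(G_m)\geq c^{\,m+1}V(G_0)>0$ for every $m$, and in particular $G_m\neq\emptyset$. Pick a sequence of good segments of length $2m$, re-centered so the middle vertex stays in a compact part of $SM$; by compactness of $SM$ and Lemma~\ref{ClosedCond} (applied to the closed condition of $\bar\varphi$-aperiodicity, adapted to the discrete-time flow $\phi^{r_0}$), a convergent subsequence produces a bi-infinite discrete geodesic $\bar\gamma:\Z\to M$ which is $\bar\varphi$-aperiodic with the prescribed parameters $(\bar s_0,\bar\e_0,r_0)$.

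\textbf{Main obstacle.}
The delicate point is Step~3: obtaining the geometric factor $c^{-l}$ rather than the naive $e^{-(n-1)lr_0}$. This demands a careful use of the local product structure of the hyperbolic geodesic flow to propagate the $\bar\e_0$-closeness constraint along the whole subarc, and a volume comparison in a fundamental domain whose distortion at scale $\bar\e_0<i_M$ is absorbed into the constant $\bar c=\bar c(n,i_M)$.
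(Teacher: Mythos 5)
Your overall architecture is the right one (discretize, branch with a factor $2^{n-1}$ extracted from $r_0>\ln 2$, subtract the volume of newly forbidden configurations, conclude geometric growth by $c$ from \eqref{C}, and pass to a bi-infinite limit by compactness and closedness of the condition), but the two quantitative steps that carry the whole proof do not work as stated. In Step 2, if "segments of length $m$" are parametrized by their initial unit vectors and measured by Liouville volume, then a segment is completely determined by its initial vector, so the good sets are nested and $V(G_{m+1})\leq V(G_m)$; the claimed expansion $V(G_{m+1})\geq 2^{n-1}V(G_m)$ is impossible in that parametrization. The stated geometric reason is also incorrect: the sphere-area bound $\sinh^{n-1}(r_0)>2^{n-1}$ fails for $r_0$ just above $\ln 2$ (indeed $\sinh(\ln 2)=3/4$). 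What $r_0>\ln 2$ actually buys is the horospherical (unstable) expansion $e^{r_0}>2$, and one needs a bookkeeping device converting this into genuine branching: the paper works on horospheres $H_{mr_0}$ based at $\infty$ and nests $2^{n-1}$ disjoint isometric cubes inside the shadow of each cube on the next horosphere, thereby coding endpoints literally by words in an alphabet of size $2^{n-1}$ and counting cubes. Some discretization of this kind (endpoint measure on horospheres would also do) is needed before any counting can begin.

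The more serious gap is Step 3, which you yourself flag as unresolved. The factor $c^{-l}$ in \eqref{C} is not of geometric origin and cannot be obtained from contraction of fellow travelers relative to $V(G_m)$: for a fixed condition with shift $s$ and $l=\bar\ell(s)$, what geometry gives is only that the bad extensions are controlled by the good configurations at step $m-l$, namely a bound of the form $\bar c\cdot\lvert \cal{Q}^g(m-l)\rvert$, where $\bar c$ comes from two packing arguments: the $\bar\e_0$-neighborhood of the $\psi$-image of the repeated block meets boundedly many cubes on $H_{(m+1)r_0}$, and only boundedly many $\psi\in\Gamma$ are relevant because the orbit $\Gamma\bar y$ is $2i_M$-separated, so the balls $B(\psi\bar y,i_M)$ are disjoint and a volume comparison bounds their number (this is precisely where $i_M$ enters $\bar c$, and it is missing from your sketch). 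The factor $c^{-l}$ then comes from the induction hypothesis that the good count grows by a factor $c$ at each step, i.e. $\lvert\cal{Q}^g(m-l)\rvert\leq c^{-l}\lvert\cal{Q}^g(m)\rvert$, exactly as in the shift case (Lemmas \ref{Iteration}, \ref{CountConditions}, \ref{AsymptoticGrowth}); trying to prove the relative bound $\bar c\,c^{-l}$ directly, before that induction, is circular, since nothing a priori prevents the good set at step $m$ from being much smaller than $c^{l}$ times the good set at step $m-l$. Two further ingredients you omit: the reduction of a long $\bar\e_0$-fellow-traveling event in $M$ to a single deck transformation $\psi\in\Gamma$ (the paper's Lemma \ref{GoodEquiv}, again via $2i_M$-separation and discreteness), and the use of \eqref{D} to guarantee $\bar\ell(s)<s$, so that the repeated block is entirely determined by the fixed prefix. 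Your Step 4 (compactness in $SM$ plus closedness of the condition to pass from a one-sided to a two-sided aperiodic geodesic) does match the paper and is fine.
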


\begin{remark} Since $\bar \ell$ is unbounded, condition \eqref{C} depends again essentially on the convergence of the sum in  \eqref{C}.
For instance, let  $\delta \in (0,1)$ and define  $\bar \varphi(l) =2^{\delta(n-1)l}$ and let $c\in (2^{\delta(n-1)}, 2^{n-1})$.
Then, since  $\bar \ell(s)=\lceil \frac{1}{\delta(n-1)\ln(2)} \ln(s) \rceil$  for $s\geq0$, 
there exists a minimal shift $\bar s_0= \bar s_0(n, \delta,\bar c,c)$ such that  \eqref{D} and \eqref{C} are satisfied.

The constant $\bar c$ of condition \eqref{C} can in fact be sharped to be also dependent on $\bar s_0$, in which case it is strictly decreasing in $\bar s_0$.
It will be explicitly defined in the proof of claim \ref{Claim}.
We may give a rough upper bound of $\bar c$ which is independent of $\bar s_0$ by
\be \label{ComputeConstant}
	\bar c \leq \lceil \big(3 \cosh(i_M) \sqrt{n+1} \big)^{n-1} \rceil \lceil \frac{\int_0^{5i_M+4\ln(\sqrt{n+1}/2)} \sinh(t)^{n-1}dt}{\int_0^{i_M/2} \sinh(t)^{n-1}dt} \rceil.
\ee

The lower bound $\ln(2)$ on the injectivity radius is necessary for the proof. However we believe that the result should be valid without this bound.
Moreover, a version of Theorem \ref{TheoremGeodesic} remains true for $M$ a closed $n$-dimensional Riemannian manifold of negative sectional curvature.
\end{remark}

\begin{remark} Again, the critical function $\varphi$ such that $\varphi$-aperiodic geodesics might or might not exist seems to be the function $\varphi(s) =e^{(n-1)s}$  and the critical exponent $n-1$ equals the  topological entropy of $(SM, \phi^t)$.

Lemma \ref{UpperBound} gives an upper bound on the growth rate of non-increasing functions $F:(0, \infty) \to (0, \infty)$ for which $F$-aperiodic geodesics can exist. 
In fact, since $SM$ is a $(2n-1)$-dimensional manifold, its box dimension is $2n-1$.
Discretizing $\phi^t$ by the time $t_0$-map $\phi^{t_0}$ where $t_0= t_0(i_M)>0$ is sufficiently small, 
 gives the upper bound 
\be \nonumber
	\limsup_{\e\to 0}\frac{ \ln(F(\varepsilon))}{\ln (2/\e)} \leq 2n-1.
\ee
\end{remark}

\begin{remark}
For a closed geodesic $\alpha : \R\to M$, let $\cal{N}_{\varepsilon_0}(\alpha)$ be the (closed) $\e_0/2$-neighborhood of $\alpha$ in $M$, where $\e_0>0$ sufficiently small. 
When a geodesic $\gamma:\R \to M$ enters $\cal{N}_{\varepsilon_0}(\alpha)$  at time $t_0$ let $\mathfrak{p}_{\alpha}(\gamma, t_0)$ be the \emph{penetration length} of $\gamma$ in $\alpha$ at time $t_0$, that is,  the maximal length $L\in [0, \infty]$ of an interval $I$, $t_0\in I$, such that  $\gamma(t) \in \cal{N}_{\varepsilon_0}(\alpha)$ for all $t\in I$. Set $\mathfrak{p}_{\alpha}(\gamma, t_0)=0$ if 
$\gamma(t_0)\not \in \cal{N}_{\varepsilon_0}(\alpha)$. 
Then by \cite{Paulin}, for $\mu$-almost every $v\in SM$ the limit 
\be \label{Asymptotic2}
	\limsup_{t\to \infty} \frac{\mathfrak{p}(\gamma_v(t))}{\ln(t)}
\ee 
exists and equals $1/(n-1)$.

Moreover, the penetration length reflects the \emph{depth} in which $\gamma$ enters the neighborhood $\cal{N}_{\varepsilon_0}(\alpha)$.
The study of depths or penetration lengths in an adequate convex set of negatively curved manifolds, such as the $\e$-neighborhood of totally geodesic embedded submanifold  or the cusp-neighborhood of a finite-volume hyperbolic manifold, leads to the theory of diophantine approximation in negatively curved manifolds; see for instance \cite{Haas,Hersonsky,Paulin, Parkkonen,Parkkonen2,Patterson,Sullivan,Velani} to give only a short and incomplete list.
In general, a sequence of depths or penetration lengths and times of $\gamma$ in these convex sets reflects "how well $\gamma$ is approximated", where $\gamma$ is called \emph{badly approximable} if any such sequence is bounded; see \cite{Hersonsky,Paulin}.

Now, let $\gamma$ be a $\varphi$-aperiodic geodesic ($\varphi$ unbounded) with respect to the parameters $s_0$ and $\e_0$ and let $\alpha$ be \textbf{any} closed geodesic in $M$. 
Then, it can be seen that the penetration lengths of  $\gamma$ in $\cal{N}_{\e_0}(\alpha)$ are bounded by a constant depending only on $\varphi$, $\e_0$ and the length of $\alpha$ (and  $s_0$ respectively). 
Therefore, the notion of $\varphi$-aperiodicty is linked to bad approximation; recall also Example \ref{Ex1}.
In particular, the limit of \eqref{Asymptotic2} equals $0$ for $\gamma$.
\end{remark}


\section{Proofs} \label{Paragraph2}
Let $\varphi : \N_0 \to [0, \infty)$ be a non-decreasing unbounded function. 
Recall the definition of the function  $\ell : \N \to \N_0$ given by
\be 
\nonumber
	\ell(s) = \min\{ j \in \N_0 : \varphi(j) \geq s\},
\ee
see \eqref{DefQuantile}.
The following properties hold: $\ell$ is non-decrasing and for $s$ and $l \in \N_0$, we have 
\be \label{Properties}
\begin{array}{c}
		\varphi( \ell(s)) \geq s, \\[1mm]
	l < \ell(s) \iff \varphi(l) < s, \\[1mm]
	l \geq \ell(s) \iff \varphi(l) \geq s.
\end{array}
\ee
\begin{proof} For the first property, clearly $\varphi( \min\{ j: \varphi(j) \geq s \} )  \geq s$.
Let $l< \ell(s)$ and assume $s \leq \varphi(l)$. Then $\ell(s) = \min\{ j: \varphi(j) \geq s \} \leq l$; a contradiction.
If $s> \varphi(l)$ then $\ell(s)= \min\{ j: \varphi(j) \geq s \} > l$ and if $\varphi(l) \geq s$ then $\ell(j)= \min\{ j: \varphi(j) \geq s \} \leq l$.
Also, if $l\geq \ell(s)$ then $\varphi(l) \geq  \varphi(\ell(s)) \geq s$.
\end{proof}

\subsection{Proof of Theorem \ref{TheoremSequence}.}

Recall that $\Sigma^+(m)= \{w : \{1,\ldots ,m\} \to \mathcal{A}\}$ is the set of words of length $m-1$.
We consider $\Sigma^+(m)$ to be a subset of $\Sigma^+=\cal{A}^{\N}$ (for example, by extending an element $w\in \Sigma^+(m)$ to an element $\bar w\in \Sigma^+$ by setting $\bar w(i) = a$ for all $i>m$, where $a\in \cal{A}$ is fixed).

\begin{definition} Let $m\in\N$. $w\in \Sigma^+(m)$ is called \emph{$\varphi$-aperiodic} if for all $i, s \in\N$ and $l \in\N_0$ such that $i+s+l\leq m$ whenever 
\be	\nonumber
	[w(i)\ldots w(i+l)]= [w(i+s)\ldots (w(i+s+l)]
\ee
we have $s>\varphi(l)$.
\end{definition}

Let $l_0 := \min\{j\in \N_0 \cup \{-1\} : \varphi(j+1)\neq 0\}$ and note that $\ell(s)>l_0$ for all $s\in \N$.
For $m\in \N$, define the \emph{admissible set}  by
\be \nonumber 
	A(m) := \{ (i,s) \in \N\times\N: i+s+\ell(s)=m\},
\ee
if $m\geq m_0 := 2+\ell(1)>2+l_0$ and let $A(m)$ be empty for $m< m_0$.
Then, for $(i,s)\in A(m)$ where $m\geq m_0$, we define the sets
\be\nonumber
	C_{is} := \{w \in \Sigma^+(m): [w(i)\ldots w(i+\ell(s))] \neq [w(i+s)\ldots w(i+s+\ell(s))]\},
\ee
called \emph{conditions}.

\begin{remark} Note that $s> \varphi(\ell(s)-1)$ for $\ell(s)>0$ but $s\leq \varphi(\ell(s))$. Therefore $\ell(s)$ determines the critical length of a given shift $s$ with respect to $\varphi$. 
\end{remark}

For $w \in \Sigma^+(m)$ and $1\leq n \leq m$ let $w\lvert_n := [w(1)\ldots w(n)] \in \Sigma^+(n)$. 
This leads to the reformulation of $\varphi$-aperiodic words: 

\begin{lemma}\label{Reformulation}
For $m<m_0$ every word $w\in \Sigma^+(m)$ is $\varphi$-aperiodic.
For $m\geq m_0$, a word $w\in \Sigma^+(m)$ is $\varphi$-aperiodic if and only if for all  $n\leq m$ and all $(i,s)\in A(n)$ we have $w\lvert_n \in C_{is}$.
\end{lemma}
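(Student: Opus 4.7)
The plan is to translate the definition of $\varphi$-aperiodicity into a contrapositive form via the function $\ell$, then exploit a monotonicity reduction that singles out the ``critical length'' $\ell(s)$ encoded in each $C_{is}$. By the properties \eqref{Properties}, the inequality $s \leq \varphi(l)$ is equivalent to $l \geq \ell(s)$, so $w \in \Sigma^+(m)$ fails to be $\varphi$-aperiodic exactly when there exist $i \in \N$, $s \in \N$, $l \in \N_0$ satisfying $i+s+l \leq m$, $l \geq \ell(s)$, and $[w(i)\ldots w(i+l)] = [w(i+s)\ldots w(i+s+l)]$. I will use this as the working form of the definition throughout.

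The first assertion ($m < m_0$) is then pure vacuity: any such triple would satisfy $l \geq \ell(s) \geq \ell(1)$ by monotonicity of $\ell$, hence $i+s+l \geq 1 + 1 + \ell(1) = m_0 > m$, contradicting $i+s+l \leq m$; no constraint applies and every word is $\varphi$-aperiodic trivially. For $m \geq m_0$, the key step is a monotonicity reduction in $l$: once the subwords of length $\ell(s)+1$ starting at positions $i$ and $i+s$ disagree, so do all longer subwords starting at these positions; conversely, taking $l = \ell(s)$ in the general condition recovers the short one. Hence $\varphi$-aperiodicity of $w$ is equivalent to $w \in C_{is}$ for every pair $(i,s)$ with $i+s+\ell(s) \leq m$. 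Partitioning such pairs according to $n := i+s+\ell(s)$ writes the set of pairs as $\bigcup_{m_0 \leq n \leq m} A(n)$; since $C_{is}$ depends only on the letters $w(1), \ldots, w(n)$, the condition $w \in C_{is}$ is equivalent to $w\lvert_n \in C_{is}$. This yields exactly the required reformulation.

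There is no serious obstacle; the whole argument is bookkeeping with \eqref{Properties} and the contrapositive. The step that deserves a moment of care is the reduction ``from all $l \geq \ell(s)$ to $l = \ell(s)$'': this is precisely why $\ell(s)$ is the critical length, namely the smallest $l$ for which $\varphi(l) \geq s$, so a single length per shift captures every potential violation associated with that shift. Once this observation is made, everything else is a direct unpacking of the notation $A(n)$, $C_{is}$, and $w\lvert_n$.
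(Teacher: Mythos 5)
Your proof is correct and follows essentially the same route as the paper: both arguments use the equivalence $s\leq\varphi(l)\iff l\geq\ell(s)$ from \eqref{Properties} to reduce any violation to the critical length $\ell(s)$ (agreement at length $l\geq\ell(s)$ forces agreement at length $\ell(s)$, and conversely $l=\ell(s)$ is itself an admissible length), and both treat $m<m_0$ as vacuous because $i,s\geq 1$ forces $l<\ell(1)$, i.e.\ $\varphi(l)<1\leq s$. The bookkeeping with $n=i+s+\ell(s)$, $A(n)$ and $w\lvert_n$ matches the paper's proof, so nothing further is needed.
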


\begin{proof} 
First, let $m<m_0$. Then for every $i$,$s\in \N$, $l\in \N_0$ such that $i+s+l\leq m<2+\ell(1)$ we have in particular $l<\ell(1)$.
Equivalently, $\varphi(l)<1$ so that $s>\varphi(l)$ and every word $[w(1)\ldots w(m)]$ follows to be $\varphi$-aperiodic. 

Now let $m\geq m_0$.
Let $w$ be $\varphi$-aperiodic and assume $w\lvert_n \not \in C_{is}$ for some $i$ and $s$ in $\N$ such that $i+s+\ell(j)=n\leq m$.
Then 
\be
	 [w(i)\ldots w(i+\ell(s))] = [w(i+s)\ldots w(i+s+\ell(s))] \nonumber
\ee
and by \eqref{Condition}, we have $s>\varphi(\ell(s))$; a contradiction to $\varphi(\ell(s)) \geq s$. 

Conversely, assume that $w$ is not $\varphi$-aperiodic. Then there are $i$, $s \in \N$ and $l \in \N_0$ such that $i+s+l \leq m$ and
\be
	 [w(i)\ldots w(i+l)] = [w(i+s)\ldots w(i+s+l)] \nonumber
\ee
 with $s \leq \varphi(l)$.
This implies that $\ell(s) \leq l$ and in particular 
\be
	[w(i)\ldots w(i+\ell(s))] = [w(i+s)\ldots w(i+s+\ell(s))]. \nonumber
\ee
Hence, it follows that $w \lvert_n \not \in C_{is}$ since $i+s+\ell(s) = n\leq m$ so that $(i,s)\in A(n)$.
\end{proof}

\noindent Note that by the same arguments as in the previous proof, a word $w\in \Sigma^+$ is $\varphi$-aperiodic if and only if for all  $ n\geq m_0$ and all $(i,s)\in A(n)$ we have $w\lvert_n \in C_{is}$.

 For $m \in \N$ such that $m\geq m_0$ the set of good words of length $m$ is therefore given by 
\be\nonumber
	\mathcal{W}^g(m) = \{w \in \Sigma^+(m) : w\lvert_n \in C_{is} \text{ for all } (i,s)\in A(n) \text{ where } n \leq m \},
\ee
and by $\mathcal{W}^g(m) = \Sigma^+(m)$ otherwise.
Let 
\be\nonumber
	\mathcal{C}_m = \{C_{is}: (i,s)\in A(m) \} 
\ee
 be the set of conditions at place $m$ which is empty if and only if $m< m_0$. 
Clearly, if $w\in \mathcal{W}^g(m)$ then $w\lvert_n \in \mathcal{W}^g(n)$ for $n\leq m$.

\begin{lemma}\label{Iteration}
For $m \in \N$, 
\be\nonumber
	\lvert \mathcal{W}^g(m+1) \rvert \geq  k \cdot \lvert \mathcal{W}^g(m) \rvert - \sum_{C_{is} \in \mathcal{C}_{m+1}} \lvert \mathcal{W}^g(i+s-1) \rvert 
\ee
\end{lemma}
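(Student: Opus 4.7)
The plan is to build $\mathcal{W}^g(m+1)$ by extending each good word of length $m$ by one letter and then subtracting those extensions that violate some new condition in $\mathcal{C}_{m+1}$. Each $u \in \mathcal{W}^g(m)$ admits exactly $k$ one-letter extensions $w \in \Sigma^+(m+1)$ (one per choice of $w(m+1) \in \mathcal{A}$), producing $k \cdot |\mathcal{W}^g(m)|$ candidates in total. By Lemma \ref{Reformulation}, together with the observation that $w\lvert_m = u \in \mathcal{W}^g(m)$ already witnesses every $C_{i's'}$ with $(i',s') \in A(n)$, $n \leq m$, such a candidate $w$ lies in $\mathcal{W}^g(m+1)$ if and only if $w \in C_{is}$ for every $C_{is} \in \mathcal{C}_{m+1}$. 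A union bound therefore yields
\[
|\mathcal{W}^g(m+1)| \;\geq\; k\,|\mathcal{W}^g(m)| \;-\; \sum_{C_{is} \in \mathcal{C}_{m+1}} N_{is},
\]
where $N_{is}$ denotes the number of extensions $w$ with $w\lvert_m \in \mathcal{W}^g(m)$ and $w \notin C_{is}$.

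The core of the argument is then to prove $N_{is} \leq |\mathcal{W}^g(i+s-1)|$ for each $(i,s)\in A(m+1)$. Fix such a pair, so that $i+s+\ell(s) = m+1$. The relation $w \notin C_{is}$ is the block equality $[w(i)\ldots w(i+\ell(s))] = [w(i+s)\ldots w(m+1)]$, equivalently the recurrence $w(n) = w(n-s)$ for every $i+s \leq n \leq m+1$. Reading this left to right, each letter $w(i+s),\ldots,w(m+1)$ is forced by earlier letters, so $w\lvert_{m+1}$ is completely determined by its prefix $w\lvert_{i+s-1}$. Since $w\lvert_m \in \mathcal{W}^g(m)$ immediately forces $w\lvert_{i+s-1} \in \mathcal{W}^g(i+s-1)$, we obtain the desired bound, and substituting it into the union-bound inequality above closes the proof.

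The one point I expect to require care is the determinism claim in both regimes controlled by the sign of $\ell(s)-s$. When $\ell(s) < s$ the two matching blocks are disjoint and all determining positions $i,\ldots,i+\ell(s) = m+1-s$ lie within $[1,i+s-1]$, so the recurrence trivially propagates the prefix to the suffix. When $\ell(s) \geq s$ the blocks overlap on $[i+s,\,m+1-s]$; in that case one must verify that applying $w(n)=w(n-s)$ step by step from an arbitrary assignment on $[1,i+s-1]$ produces a well-defined extension to $[1,m+1]$ that automatically satisfies the redundant equalities on the overlap. With this minor consistency check in place, the counting argument is otherwise routine.
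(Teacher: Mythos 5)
Your proof is correct and follows essentially the same route as the paper: extend the good words of length $m$ to get $k\,\lvert\mathcal{W}^g(m)\rvert$ candidates, apply a union bound over the conditions in $\mathcal{C}_{m+1}$, and bound the number of violators of each $C_{is}$ by observing that the block equality forces the word to be determined by its (necessarily good) prefix of length $i+s-1$. Your left-to-right propagation of $w(n)=w(n-s)$ is a slightly cleaner packaging of the paper's two-case ($s>\ell(s)$ versus $s\le\ell(s)$) determinism argument, but the substance is identical.
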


\begin{proof}
If $m+1<m_0$ then $\mathcal{C}_{m+1}$ is empty and the claim follows.
Hence let $m+1\geq m_0$. 
Set $L = \{ w \in \Sigma^+(m+1) : w\lvert_m \in \mathcal{W}^g(m)\}$.
Then
\be \nonumber
	\mathcal{W}^g(m+1) = 
	L \cap \big(  \bigcap_{ C_{is} \in \mathcal{C}_{m+1}} C_{is}  \big) = 
	L \setminus \big( \bigcup_{ C_{is} \in \mathcal{C}_{m+1}} (L\cap C_{is}^C)  \big),
\ee
where $C_{is}^C$ denotes the complement of $C_{is}$.
Fix some condition $C_{is} \in \mathcal{C}_{m+1}$. 
Since $\lvert L \rvert =  k  \cdot \lvert \mathcal{W}^g(m) \rvert$ the Lemma follows from the following claim.
\end{proof}

\begin{claim} $\lvert L \cap C_{is}^C \rvert \leq \lvert \mathcal{W}^g(i+s-1) \rvert$.
\end{claim}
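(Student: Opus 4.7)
The plan is to construct an explicit injection
$$\Phi : L \cap C_{is}^C \longrightarrow \mathcal{W}^g(i+s-1), \qquad \Phi(w) := w|_{i+s-1},$$
from which the desired inequality follows by cardinality.

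First I would check that $\Phi$ is well-defined. For any $w \in L$, we have $w|_m \in \mathcal{W}^g(m)$, and since the defining conditions $w|_{n'} \in C_{i's'}$ only depend on positions $\le n'$, every prefix of a good word is good. Because $i+s-1 \le m$, this gives $w|_{i+s-1} \in \mathcal{W}^g(i+s-1)$.

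The main step is injectivity, which rests on the periodicity encoded by $C_{is}^C$. Fix $(i,s)\in A(m+1)$, so $m+1 = i+s+\ell(s)$. Suppose $w, \widetilde w \in L \cap C_{is}^C$ agree on their first $i+s-1$ letters; I need to show they agree on the remaining positions $i+s, i+s+1, \ldots, i+s+\ell(s) = m+1$. Being in $C_{is}^C$ means
$$w(i+s+j) \;=\; w(i+j), \qquad j = 0, 1, \ldots, \ell(s),$$
and similarly for $\widetilde w$. For the range $0 \le j \le s-1$, the index $i+j$ lies in $\{1,\ldots,i+s-1\}$, so $w(i+j) = \widetilde w(i+j)$ by hypothesis; hence the first $s$ of the unknown positions agree. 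For $s \le j \le \ell(s)$, the index $i+j$ lies in the range $\{i+s,\ldots,i+s+\ell(s)-s\}$, whose values were already matched in a previous step of the iteration; so $w(i+s+j) = \widetilde w(i+s+j)$ by induction on $j$. This exhausts all positions, so $w = \widetilde w$ and $\Phi$ is injective.

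The key observation underlying injectivity is simply that a finite word of length $\ell(s)+1$ satisfying a periodicity relation with shift $s$ is determined by any contiguous block of $s$ consecutive entries; no auxiliary assumption like $\ell(s) < s$ is required, since the recurrence $w(i+s+j)=w(i+j)$ propagates whether or not the indices on the right have been newly created. The main thing to get right is the bookkeeping on indices and the verification that $i+s-1 \ge 1$ (which follows from $i,s\in\N$), so that the target space $\mathcal{W}^g(i+s-1)$ is meaningful. Combining well-definedness and injectivity yields
$$|L \cap C_{is}^C| \;\le\; |\mathrm{Image}(\Phi)| \;\le\; |\mathcal{W}^g(i+s-1)|,$$
which is the claim.
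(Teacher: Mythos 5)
Your proof is correct and follows essentially the same route as the paper: you show that any word in $L \cap C_{is}^C$ is uniquely determined by its prefix of length $i+s-1$, which is exactly the paper's argument phrased via the fiber decomposition $L=\bigcup_{q\in Q}L_q$ with $\lvert L_q\cap C_{is}^C\rvert\leq 1$. Your single induction on $j$ neatly subsumes the paper's case distinction between $s>\ell(s)$ and $s\leq\ell(s)$, but the underlying idea is identical.
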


\begin{proof} If $Q:= \{ w\lvert_{i+s-1} \in \Sigma^+(i+s-1) : w\in L \}$ then clearly $\lvert Q \rvert \leq  \lvert \mathcal{W}^g(i+s-1) \rvert$.
Decompose $L$ into $L = \cup_{q \in Q}L_q$ where $L_q = \{ w \in L : w\lvert_{i+s-1} = q\}$.
By definition, different elements in $L_q$ have different subwords $[w(i+s)\ldots w(m+1)]$ and moreover 
\be \nonumber
	L\cap C_{is}^C= \{w\in L : [w(i)\ldots w(i+\ell(s)] = [w(i+s)\ldots w(m+1)]\}.
\ee
Hence, if $s> \ell(s)$ then an element $w$ of  $L_q$, which is also in $C_{is}^C$, is uniquely determined by $q$, that means, $w$ is of the form $w \lvert_{i+s-1} = q$ and  
\be \nonumber
	[w(i+s)\ldots w(m+1)]= [q(i)\ldots q(i+\ell(s))].
\ee 
If $s\leq \ell(s)$ then one inductively checks that a word $w$ in $L_q \cap C_{is}^C$ is of the form $w \lvert_{i+s-1} = q$, 
\be \nonumber
\begin{array}{lcl} 
	[w(i+js)\ldots w(i+(j+1)s-1)] &=& [w(i+(j-1)j)\ldots w(i+js-1)]  = \ldots  =\\
	 &=& [w(i)\ldots w(i+s-1)] =  [q(i)\ldots q(i+s-1)]
\end{array}
\ee
for $1\leq j \leq j_0$ where $j_0$ is the maximal $j$ such that $i + (j+1)s -1 \leq m+1$, and 
\be \nonumber
	[w(i + (j_0+1) j)\ldots w(m+1) ] = [q(i)\ldots q(m+1 - (i+(j_0+1)s))],
\ee
if $i+(j_0+1)s <m+1$.
Again, $w$ is uniquely determined by $q$. Hence in both cases, $\lvert L_q \cap C_{is}^C \lvert \leq 1$ and therefore 
\be	 \nonumber
	\lvert L \cap C_{is}^C \rvert \leq \lvert Q \rvert \leq \lvert \mathcal{W}^g(i+s-1) \rvert
\ee
which proves the claim.
\end{proof}

The above Lemma yields the following crucial estimate:

\begin{lemma} \label{CountConditions} 
For $m\in \N$,
\be
	\lvert \mathcal{W}^g(m+1) \rvert \geq 
	\big( k- \lfloor \varphi(0)\rfloor \big) \lvert \mathcal{W}^g(m) \rvert - \sum_{j=1}^m \big( \lfloor \varphi(j) \rfloor - \lfloor \varphi(j-1) \rfloor\big) \lvert \mathcal{W}^g(m-j) \rvert.
\ee
\end{lemma}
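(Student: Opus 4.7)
The plan is to combine Lemma \ref{Iteration} with a careful rewriting of the conditions in $\mathcal{C}_{m+1}$, grouped by the value of $\ell(s)$. Recall that $\mathcal{C}_{m+1}$ is indexed by pairs $(i,s) \in A(m+1)$, i.e.\ by integers $i,s \geq 1$ with $i+s+\ell(s) = m+1$. In particular, for each such pair one has $i+s-1 = m - \ell(s)$, and for fixed $s$ there is at most one admissible $i$, namely $i = m+1-s-\ell(s)$, which must be $\geq 1$. Consequently the sum appearing in Lemma \ref{Iteration} can be rewritten as
\be \nonumber
\sum_{C_{is} \in \mathcal{C}_{m+1}} \lvert \mathcal{W}^g(i+s-1) \rvert \;=\; \sum_{\substack{s \geq 1 \\ s+\ell(s) \leq m}} \lvert \mathcal{W}^g(m-\ell(s)) \rvert.
\ee

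Next, I would regroup this sum according to the value $l = \ell(s)$. By the definition of $\ell$ and the equivalences \eqref{Properties}, an integer $s \geq 1$ satisfies $\ell(s) = l$ precisely when $\lfloor \varphi(l-1)\rfloor < s \leq \lfloor \varphi(l)\rfloor$, with the convention $\lfloor \varphi(-1)\rfloor = 0$. Hence there are exactly $\lfloor \varphi(0)\rfloor$ integers $s \geq 1$ with $\ell(s) = 0$, and, for $l \geq 1$, exactly $\lfloor \varphi(l)\rfloor - \lfloor \varphi(l-1)\rfloor$ integers $s$ with $\ell(s) = l$. Since the sum is \emph{subtracted} on the right-hand side of Lemma \ref{Iteration}, I need an upper bound, so I may safely drop the side constraint $s + l \leq m$ and use the monotonicity $\lvert \mathcal{W}^g(m-l)\rvert \geq 0$. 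This yields
\be \nonumber
\sum_{C_{is} \in \mathcal{C}_{m+1}} \lvert \mathcal{W}^g(i+s-1) \rvert \;\leq\; \lfloor\varphi(0)\rfloor \, \lvert \mathcal{W}^g(m)\rvert + \sum_{l=1}^{m} \bigl(\lfloor\varphi(l)\rfloor - \lfloor\varphi(l-1)\rfloor\bigr)\, \lvert \mathcal{W}^g(m-l)\rvert,
\ee
where any term with $m - l < 0$ is vacuous because $s + l > m$ leaves no admissible $i \geq 1$. Plugging this bound into Lemma \ref{Iteration} gives the claimed inequality after factoring out $\lvert \mathcal{W}^g(m)\rvert$ from the $l=0$ contribution.

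I do not anticipate a genuine obstacle here: the argument is essentially a change of summation variable, bundling together the contributions of all shifts $s$ that share a common critical length $l = \ell(s)$. The only subtlety worth flagging is the direction of the inequality --- because the sum enters with a minus sign in Lemma \ref{Iteration}, overcounting (discarding the constraint $s+l \leq m$) is in our favour. The degenerate range $m+1 < m_0$ needs no separate treatment, as $\mathcal{C}_{m+1}$ is then empty and all increments $\lfloor\varphi(l)\rfloor - \lfloor\varphi(l-1)\rfloor$ in the relevant range vanish.
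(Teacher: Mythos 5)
Your proof is correct and follows essentially the same route as the paper: the paper groups the conditions $C_{is}\in\mathcal{C}_{m+1}$ into sets $H_j=\{C_{is}: i+s-1=m-j\}$, observes that membership forces $\ell(s)=j$, and bounds $\lvert H_j\rvert$ by $\lvert\{s:\ell(s)=j\}\rvert=\lfloor\varphi(j)\rfloor-\lfloor\varphi(j-1)\rfloor$ exactly as you do, before invoking Lemma \ref{Iteration}. Your handling of the dropped constraint $s+\ell(s)\leq m$ via the sign of the subtracted sum matches the paper's inequality $\lvert H_j\rvert\leq\lvert\{s:\ell(s)=j\}\rvert$.
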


\begin{proof}
For $0\leq j \leq m$ let
\be \label{Hs}
	H_j= \{ C_{is} \in \mathcal{C}_{m+1}: i+s-1 = m-j \},
\ee
possibly empty.
If $C_{is}\in H_j$ then $i+s+\ell(s)=m+1$ and $i+s-1 = m-j$; hence $\ell(s) = j$.
Therefore, $\lvert H_j \rvert \leq \lvert \{ s : \ell(s) =j\} \rvert$.
We have $\ell(s) \leq j$ if and only if $s \leq \varphi(j)$  and thus
\be \nonumber
	\lvert \{ s : \ell(s)\leq j \} \rvert = \lvert \{ s : s \leq \varphi(j) \} \rvert = \lfloor \varphi(j) \rfloor.
\ee
For $j \geq 1$ this implies that
\be  \nonumber
\begin{array}{lcl}
	\lvert H_j \rvert &\leq& \lvert \{ s : \ell(s) =j\} \rvert = \lvert  \{ s: \ell(s)\leq j \} \setminus \{ s: \ell(s) \leq j-1 \} \rvert  \\
	&=& \lfloor \varphi(j) \rfloor - \lfloor \varphi(j-1) \rfloor.
\end{array}
\ee  
Moreover, 
\be \nonumber
	\lvert \{ s : \ell(s) =0\} \rvert = \lvert \{ s \in \N_0: \varphi(0) \geq s \} \rvert  = \lfloor \varphi(0) \rfloor.
\ee 
Lemma \ref{Iteration} concludes the proof.
\end{proof}

Finally we show the existence of a $\varphi$-aperiodic word in $\Sigma^+$.

\begin{lemma} \label{AsymptoticGrowth}
If condition \eqref{B} is satisfied, then $\lvert \mathcal{W}^g(m) \rvert  \geq c^{m}$. In particular, there exists a $\varphi$-aperiodic word in $\Sigma^+$.
\end{lemma}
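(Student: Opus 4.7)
The plan is to prove the bound $\lvert \mathcal{W}^g(m) \rvert \geq c^m$ by induction on $m$, then extract an infinite $\varphi$-aperiodic word by a compactness argument.

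For the base case, I would handle $m < m_0$ separately: by Lemma \ref{Reformulation} every word of length $m < m_0$ is $\varphi$-aperiodic, so $\lvert \mathcal{W}^g(m) \rvert = k^m \geq c^m$ since the hypothesis $c \in (1,k)$ forces $c \leq k$. For $m=0$ the statement is the trivial $1 \geq 1$. Thus one may in fact take the induction hypothesis to be: $\lvert \mathcal{W}^g(n) \rvert \geq c^n$ for every $0 \leq n \leq m$.

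For the inductive step, I would apply Lemma \ref{CountConditions}, which gives
\begin{equation*}
\lvert \mathcal{W}^g(m+1) \rvert \geq (k - \lfloor \varphi(0)\rfloor) \lvert \mathcal{W}^g(m) \rvert - \sum_{j=1}^{m} \bigl(\lfloor \varphi(j) \rfloor - \lfloor \varphi(j-1) \rfloor\bigr)\,\lvert \mathcal{W}^g(m-j) \rvert.
\end{equation*}
Plugging in the inductive bound $\lvert \mathcal{W}^g(m-j) \rvert \geq c^{m-j}$ and factoring out $c^m$ yields
\begin{equation*}
\lvert \mathcal{W}^g(m+1) \rvert \geq c^{m} \Biggl[ (k - \lfloor \varphi(0) \rfloor) - \sum_{j=1}^{m} \frac{\lfloor \varphi(j) \rfloor - \lfloor \varphi(j-1) \rfloor}{c^{j}} \Biggr].
\end{equation*}
Since every term of the (nonnegative) sum is nonnegative, extending the summation up to $\infty$ only weakens the lower bound, and condition \eqref{B} gives exactly that the bracket is $\geq c$. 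Hence $\lvert \mathcal{W}^g(m+1) \rvert \geq c^{m+1}$, completing the induction.

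To produce an infinite word, I would set up a finitely branching tree whose nodes at depth $m$ are the elements of $\mathcal{W}^g(m)$, with a node $w \in \mathcal{W}^g(m)$ joined to its prefix $w\lvert_{m-1}$; this prefix lies in $\mathcal{W}^g(m-1)$ because the $\varphi$-aperiodicity conditions for a word of length $m$ include all those for its length-$(m-1)$ prefix. Since we just showed $\lvert \mathcal{W}^g(m) \rvert \geq c^m \geq 1$ for every $m$, the tree is infinite with branching bounded by $k$, so König's Lemma yields an infinite branch, i.e.\ a word $w \in \Sigma^{+}$ all of whose finite prefixes are good. By the remark after Lemma \ref{Reformulation}, this $w$ is $\varphi$-aperiodic in $\Sigma^+$. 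The only mildly delicate point in the argument is the initial book-keeping that $\mathcal{W}^g(m) = \Sigma^{+}(m)$ when $m < m_0$ so that the induction can be launched; the rest is algebraic manipulation using \eqref{B}.
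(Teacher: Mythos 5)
There is a genuine gap in your inductive step, and it is exactly the point where the paper has to work differently. In the inequality from Lemma \ref{CountConditions},
\begin{equation*}
\lvert \mathcal{W}^g(m+1) \rvert \;\geq\; \bigl(k - \lfloor \varphi(0)\rfloor\bigr)\,\lvert \mathcal{W}^g(m) \rvert \;-\; \sum_{j=1}^{m} \bigl(\lfloor \varphi(j) \rfloor - \lfloor \varphi(j-1) \rfloor\bigr)\,\lvert \mathcal{W}^g(m-j) \rvert,
\end{equation*}
the quantities $\lvert \mathcal{W}^g(m-j)\rvert$ enter with a \emph{negative} sign (the coefficients $\lfloor \varphi(j)\rfloor - \lfloor \varphi(j-1)\rfloor$ are nonnegative since $\varphi$ is non-decreasing). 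To bound the right-hand side from below you therefore need an \emph{upper} bound on each $\lvert \mathcal{W}^g(m-j)\rvert$, whereas your induction hypothesis $\lvert \mathcal{W}^g(n)\rvert \geq c^n$ only supplies lower bounds. Substituting $\lvert \mathcal{W}^g(m-j)\rvert \geq c^{m-j}$ into a subtracted term produces an inequality in the wrong direction, so the displayed conclusion $\lvert \mathcal{W}^g(m+1)\rvert \geq c^m\bigl[(k-\lfloor\varphi(0)\rfloor)-\sum_j c^{-j}(\lfloor\varphi(j)\rfloor-\lfloor\varphi(j-1)\rfloor)\bigr]$ simply does not follow: nothing in your hypothesis prevents some earlier $\lvert \mathcal{W}^g(m-j)\rvert$ from being much larger than $c^{m-j}$ (a priori it could be as large as $k^{m-j}$), which would destroy the bound.

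The paper avoids this by strengthening the induction hypothesis to the multiplicative form: assume $\lvert \mathcal{W}^g(n)\rvert \geq c\,\lvert \mathcal{W}^g(n-1)\rvert$ for all $n\leq m$. This yields the needed upper bound $\lvert \mathcal{W}^g(m-j)\rvert \leq c^{-j}\,\lvert \mathcal{W}^g(m)\rvert$ on the subtracted terms, after which condition \eqref{B} gives $\lvert \mathcal{W}^g(m+1)\rvert \geq c\,\lvert \mathcal{W}^g(m)\rvert$, and $\lvert \mathcal{W}^g(m)\rvert \geq c^m$ then follows from the base case $\lvert \mathcal{W}^g(m)\rvert = k^m$ for $m<m_0$. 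Your argument can be repaired by making exactly this change of induction hypothesis; the rest of your proposal, in particular the K\"onig's Lemma extraction of an infinite word all of whose prefixes are good (using the remark after Lemma \ref{Reformulation}), is fine and matches the intent of the paper's concluding sentence.
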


\begin{proof}
For $m+1 < m_0$ we have that $\lvert \mathcal{W}^g(m+1) \rvert = k^{m+1} \geq c^{m+1}$.
For $m+1\geq m_0$  assume that $\lvert \mathcal{W}^g(n) \rvert \geq c \cdot \lvert \mathcal{W}^g(n-1) \rvert$ for all $n\leq m$. 
Then, by the previous Lemma,
\be \label{Formula}
\begin{array}{lcl} 
	\lvert \mathcal{W}^g(m+1) \rvert &\geq&
	
	 ( k- \lfloor \varphi(0)\rfloor) \lvert \mathcal{W}^g(m) \rvert - \sum_{j=1}^m ( \lfloor \varphi(j) \rfloor - \lfloor \varphi(j-1) \rfloor) \lvert \mathcal{W}^g(m-j) \rvert \\[1.5mm]

	&\geq&  ( k- \lfloor \varphi(0)\rfloor) \lvert \mathcal{W}^g(m) \rvert -  \sum_{j=1}^m \frac{ \lfloor \varphi(j) \rfloor - \lfloor \varphi(j-1) \rfloor}{c^j}\lvert \mathcal{W}^g(m) \rvert \\[1.5mm]
	
	&\geq&  \Big( k-  \lfloor \varphi(0)\rfloor- \sum_{j=1}^{\infty} \frac{ \lfloor \varphi(j) \rfloor - \lfloor \varphi(j-1) \rfloor}{c^j} \Big) \lvert \mathcal{W}^g(m) \rvert 
	
	\geq c \cdot \lvert \mathcal{W}^g(m) \rvert,
\end{array}
\ee
where we used condition \eqref{B} in the last inequality.
Now Lemma \ref{Reformulation} implies the existence of a $\varphi$-aperiodic word in $\Sigma^+$.
\end{proof}

Given a $\varphi$-aperiodic word $w\in \Sigma^+$ and a letter $a \in \cal{A}$, extend $w$ to a word $\ldots aaaw =: \bar w \in \Sigma$ (in the obvious way).
Consider the sequence $\{T^n\bar w\}_{n\in \N}$ in the compact space $\Sigma$ and let $w_0$ be an accumulation point.
Note that from the definition of the metric $\bar d$, a sequence $w^n$ in $\Sigma$ converges to a word $w_0 \in\Sigma$ if and only if for every $l\in \N_0$ there exists $N\in \N$ such that  $[w^n(-l)\ldots w^n(l)] = [w_0(-l)\ldots w_0(l)]$ for every $n\geq N$. It therefore follows that $\varphi$-aperiodicity is a closed condition (as showed similarly in Lemma \ref{ClosedCond}).
Since every $T^n\bar w$ is $\varphi$-aperiodic starting at time $-(n-1)$, $w_0$ is a $\varphi$-aperiodic word in $\Sigma$. 
This proves Theorem \ref{TheoremSequence}.


\subsection{Proof of Theorem \ref{TheoremGeodesic}.}
Recall that $M$ is a closed hyperbolic manifold of dimension $n\geq2$ and we have $\ln(2) <r_0< \bar \e_0 < i_M$. Moreover $\bar \varphi:\N_0\to [0, \infty)$ is a non-decreasing unbounded function for which conditions \eqref{D} and \eqref{C} are satisfied with respect to the given minimal shift $\bar s_0\in \N_0$.

A reference for the following is given by \cite{Eberlein,BGS}.
Let $\H^n$ be the $n$-dimensional hyperbolic upper half-space model where $d$ denotes the hyperbolic distance function on $\H^n$.
Let $\Gamma$ be the discrete, torsion-free subgroup of the isometry group of $\H^n$ identified with the fundamental group $\pi_1(M)$ of $M$ acting cocompactly on $\H^n$ such that the manifold $\Gamma \backslash \H^n$ with the induced smooth and metric structure is isometric to $M$. 
Let $\pi : \H^n \to  \Gamma \backslash \H^n \cong M$ be the projection map. 
Assume all geodesic segments, rays or lines  to be parametrized by arc length and identify their images with their point sets in $\H^n$.
Let $\partial_{\infty}\H^n$ be the set of equivalence classes of asymptotic rays in $\H^n$  which we identify with the set   $\R^{n-1} \cup \{\infty\}$, where $\bar \H^n - \{\infty\} = \H^n \cup \R^{n-1}$ is equipped with the induced Euclidean topology. 
If $\gamma$ is a ray in $\H^n$ we will simply write $\gamma(\infty)$ for the corresponding point in $\partial_{\infty}\H^n$.
For any two points $p$ and $q$ in $\bar \H^n$ denote  by $[p,q]$ the geodesic segment, ray or line in $\H^n$ - depending on if  $p, q \in \H^n$,  $p\in \H^n$ and $q \in \partial_{\infty}\H^n$, or $p,q  \in \partial_{\infty}\H^n$ respectively - connecting $p$ and $q$.

For $t\in \R$ let $H_t:= \R^{n-1}\times \{e^{-t}\} \subset \H^n$. This equals the horosphere based at $\infty$ through the point $\gamma(t)$ of the unit speed geodesic $\gamma(t)=(0,e^{-t})$. Let $h_t$ be the induced length metric on $H_t$ with respect to $d$  .
The geometry of horospheres in the hyperbolic space is well-known; see for instance \cite{Heintze} for the following facts.
$(H_t, h_t)$ is a complete and flat metric space, isometric to the $(n-1)$-dimensional Euclidean space. 
If $\gamma_i : \R \to \H^n$ with $\gamma_i(0) \in H_0 $ , $i=1,2$, are two geodesic lines in $\H^n$ with $\gamma_1 (-\infty) = \gamma_2(-\infty)=\infty$ and $\gamma_1(0)$, $\gamma_2(0)$ in the same horosphere, let $\mu(t) := h_t( \gamma_1(t), \gamma_2(t))$.
Then,  for $t \geq 0$,
\be  \label{Heintze}
	 \mu(t) = e^t \mu(0).
\ee
Moreover,  for two points $p,q$ in the same horosphere $H_t$ we have
\be \label{Heintze2}
	 h_t(p,q) = 2 \sinh (d(p,q)/2).
\ee

Now let  $\tau>0$ such that the discretization constant satisfies $r_0= \ln 2 + \tau$. Let $R>0$ be a fixed length, say $R=1$. Define $Q$ to be an isometric copy of a closed $(n-1)$-dimensional cube $[-R/2, R/2]^{n-1}$ of edge lengths $R$ in the Euclidean space $\E^{n-1}$ and contained in the horosphere $H_0$.
Starting with the cube $Q$ as a reference, we inductively shed shadows in the horospheres $H_{m r_0}$, $m \in \N$, as follows:

\begin{definition} Given two disjoint sets $S$ and $S'$ in $\bar \H^n$, the set  $\mathcal{S}(S;S'):=\{q \in S' : S \cap [\infty,q] \neq \emptyset \}$ is called the \emph{shadow of $S$ in $S'$} (with respect to $\infty$).
\end{definition}

\noindent By \eqref{Heintze}, the shadow $\mathcal{S}(Q; H_{r_0})$ of $Q$ is an isometric copy of a closed $(n-1)$-dimensional cube  of edge lengths $e^{r_0} R = (2 + e^{\tau})R$,  contained in $H_{r_0}$.
Hence, there exist $2^{n -1}$ disjoint isometric copies $Q_j$, $j \in \{1,\ldots ,2^{n-1}\}$, of $Q$ in $\mathcal{S}(Q; H_{r_0})$; see Figure \ref{HoroInc}.

\begin{center} \label{HoroInc}
 \includegraphics[scale=0.50]{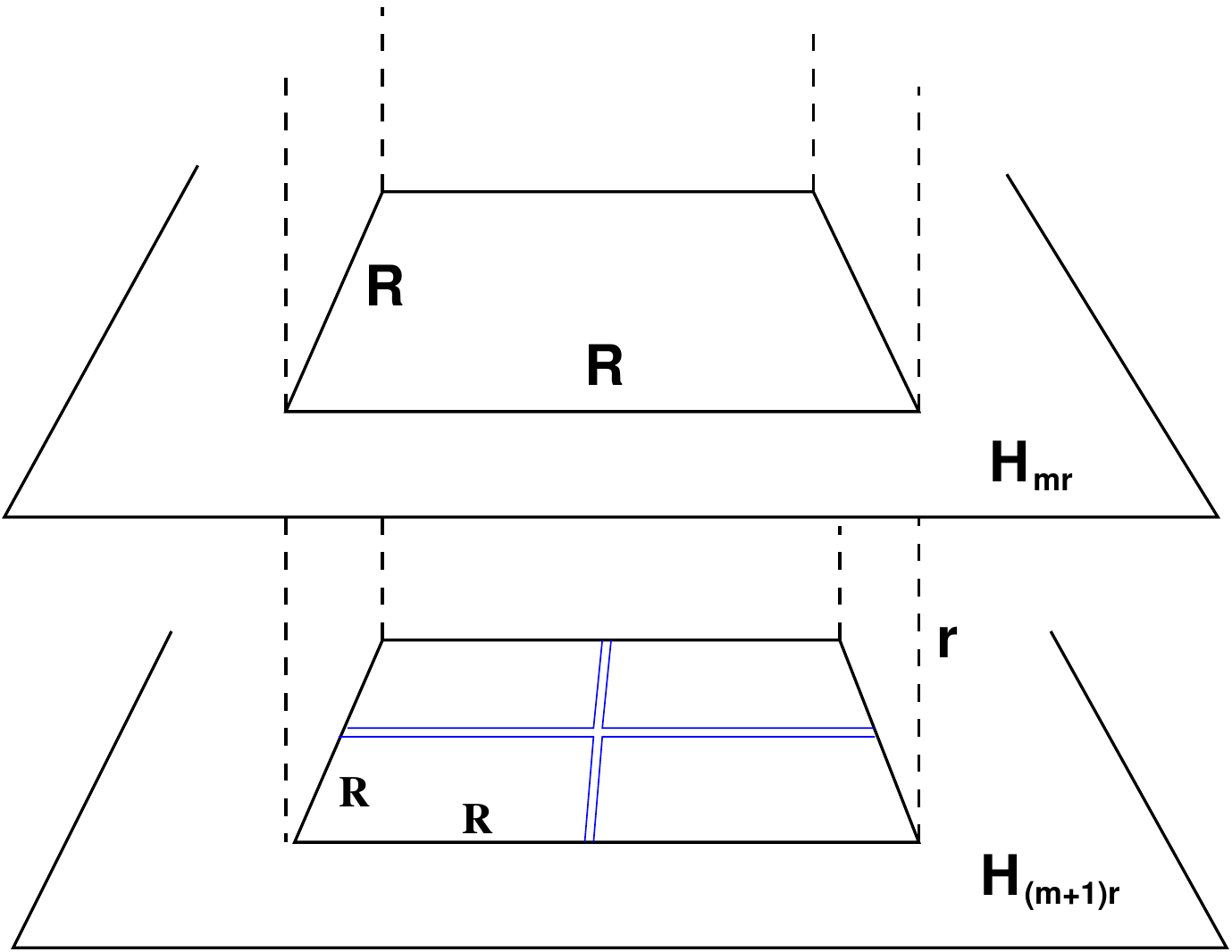} \\Figure \ref{HoroInc}: $n=3$.
\end{center}

For $m\geq 1$, let the closed disjoint cubes $Q_{i_1\ldots i_m}$ in $H_{m r_0}$  be already defined.
Fix a \emph{cube} $Q_{i_1\ldots i_m}$, then, as above, the shadow
\be \nonumber
	\mathcal{S}(Q_{i_1\ldots i_m}; H_{(m+1)r_0}) \subset  H_{(m+1)r_0}
\ee
contains $2^{n-1}$ disjoint isometric copies $Q_{i_1\ldots i_m j}$ of $Q$, $j\in \{1,\ldots ,2^{n-1}\}$.
Hence, for an alphabet $\mathcal{A}=\{1,\ldots ,2^{n-1}\}$, we associate a finite word $[w(1)\ldots w(m+1)] \in \Sigma^+(m+1)$ to the cube $Q_{i_1\ldots i_{m+1}}$ in $H_{(m+1)r_0}$ where $w(n)=i_n$ for all $n \in \{1,\ldots ,m+1\}$. 
In particular, we obtain a bijection of finite words $\Sigma^+(m)$ of length $m$ with the set of cubes 
\be \nonumber
	\mathcal{Q}(m) := \{Q_{i_1\ldots i_{m}} \subset H_{m r_0} : i_n \in \{1,\ldots ,2^{n-1}\} \text{ for } 1\leq n \leq m\}.
\ee
We denote the closed cubes $Q_{i_1\ldots i_{m}}$ obtained in this way by $q(1)\ldots q(m)$ where $q(n) \in \{1,\ldots ,2^{n-1}\}$ for $n \in \{1,\ldots ,m\}$.
Every sequence of cubes 
$\{ q(1)q(2)\ldots q(m) \}_{m \in \N}$, successively shadowed from the previous ones, determines a unique point 
\be\nonumber
	\eta := \bigcap_{m\in \N} \cal{S}(q(1)\ldots q(m);  \R^{n-1}) \in \R^{n-1},
\ee
since $\cal{S}(q(1)\ldots q(m); \R^{n-1})$, $m\in \N$, is a sequence of closed nested subsets of $\R^{n-1}$ with  diameters converging to $0$.
Define $\eta =:q(1)q(2)\ldots $ in $\R^{n-1}$.
By construction, the  geodesic line  $[\infty,\eta]$ runs through every cube $q(1)\ldots q(m)$, $m\in \N$, of the particular sequence. 
Hence, we obtain a bijection of infinite sequences $q(1)q(2)\ldots $ of cubes  
and words $w= :[w(1)w(2)\ldots ]$ in $\Sigma^+$. 
\\

\emph{Notation.}
Given a cube $q(1)\ldots q(m)$ in $\cal{Q}(m)$ and an integer $n\leq m$, let $q(1)\ldots q(m)\lvert_{n} \in \cal{Q}(n)$ be the unique cube such that $q(1)\ldots q(m)$ lies in the shadow of $q(1)\ldots q(m)\lvert_n$. Moreover, for $\xi \in \R^n$ we denote the geodesic subsegment 
$[i,j](\xi)$ by
\be \nonumber
	[i,j](\xi) := [\infty, \xi] \lvert_{[i r_0, j r_0]} : [i r_0, j r_0] \to \H^n,
\ee
where we assume that $[\infty, \xi](0) \in H_0$ and that  $i, j \in \N_0$ with $ i \leq j $, which connects the horospheres $H_{ir_0}$ to $H_{jr_0}$ and is orthogonal to both. 
If $i=j$, then we write $[i](\xi) := [i,i](\xi)$ which is the orthogonal projection of $\xi$ on the horosphere $H_{ir_0}$.

We again define the \emph{admissible set} 
\be \nonumber
	A(m) := \{ (i,s)\in \N\times \N : i+s+ \bar \ell(s) =m, s>\bar s_0\},
\ee 
if $m\geq m_0 := 2+\bar s_0+\bar \ell(\bar s_0+1)$ and set $A(m)$ to be empty for $m<m_0$.

\begin{definition}
Let $\psi\in \Gamma$ be an isometry and let $i$, $s\in \N$, $l\in \N_0$. 
If $\xi \in \R^{n-1}$ such that $d(\psi ([i](\xi)), [i+s](\xi))< \bar \e_0$ and also $d(\psi ([i+l](\xi)), [i+s+l](\xi))< \bar \e_0$ we write
\be \nonumber
	\psi \big( [i, i+l] (\xi) \big)  \sim_{\bar \e_0}  [i+s, i+s+l](\xi).
\ee
\end{definition}

\noindent In particular, by convexity of the distance function, we have for all $ j\in \{0,\ldots ,l\}$,
\be \label{Convexity}
	d( \psi \big( [i, i+j] (\xi) \big),   [i+s, i+s+j](\xi))<\bar \varepsilon_0.
\ee

We are now able to translate the proof of Theorem \ref{TheoremSequence} for the existence of $\varphi$-aperiodic words into the existence of $\varphi$-aperiodic geodesics by counting good cubes: 

\begin{definition}	\label{Good}
Let $m \in \N$. A cube $q(1)\ldots q(m)$ in $\mathcal{Q}(m)$ is called \emph{good} 
if for every $\xi \in \cal{S}(q(1)\ldots q(m);  \R^{n-1}) $, every $\psi \in \Gamma$ and every $i \in \N$, $l \in \N_0$, whenever
\be \label{ConditionGeod}
 	 \psi \big( [i, i+l] (\xi) \big)  \sim_{\bar \e_0}  [i+s, i+s+l](\xi)
\ee
for some shift $s > \bar s_0$ such that $i+s+l \leq m$, then $s > \bar \varphi(l)$. Otherwise $q(1)\ldots q(m)$ is called \emph{bad}.
\end{definition}

\noindent  If the cube $q(1)\ldots q(m)$ is good, then, since $\bar \e_0<i_M$, for every $x \in q(1)\ldots q(m)$ the projection of the geodesic segment $[\infty,x]\lvert_{[r_0, mr_0]}$  into $M$
is $\bar \varphi$-aperiodic, up to length $mr_0$, with respect to  condition \eqref{ConditionVarphi} (see the proof Lemma \ref{GoodEquiv} $(2)$).

Analogously to the proof of Theorem \ref{TheoremSequence}, for $(i,s)\in A(m)$ and $m\geq m_0$, define
\be \nonumber
\begin{array}{r}
	C_{is}:=\{ q(1)\ldots q(m) \in \mathcal{Q}(m) :  \text{ for all } \xi \in \cal{S}(q(1)\ldots q(m);  \R^{n-1}) \text{ and } \psi \in \Gamma, \\[1.5mm]
	 \psi \big( [i,i+\bar \ell (s)](\xi) \big)  \not \sim_{\bar \e_0}  [i+s, m](\xi)\}
\end{array}
\ee
and let $\mathcal{C}_m$ be the set of all $C_{ij}$ for $(i,j)\in A(m)$. Note that $\mathcal{C}_m$  is empty if $m<m_0$.

With respect to these definitions, the relationship between Definitions \ref{DiscreteDef} and \ref{Good} respectively and the sets $C_{is}$ is given by the following Lemma:

\begin{lemma} \label{GoodEquiv}
$(1)$ For $m<m_0$ every cube  $q(1)\ldots q(m) \in \cal{Q}(m)$ is good.
For $m\geq m_0$, the cube $q(1)\ldots q(m)\in \cal{Q}(m)$ is good if  $ q(1)\ldots q(m)\lvert_n \in C_{is}$ for all $n\leq m$ and $(i,s)\in A(n)$.

$(2)$ Let $q(1)q(2)\ldots $ be an infinite sequence of cubes and let $\eta \in \R^{n-1}$ be the unique corresponding limit point.
The discrete geodesic $\overline{ \pi \circ [r_0, \infty)(\eta)}$ in $M$ is $\bar \varphi$-aperiodic at every time $i\in \N$ if  for all $m\in \N$ and $(i, s) \in A(m)$ the cube $q(1)\ldots q(m)$ in $\mathcal{Q}(m)$ of the sequence $q(1)q(2)\ldots $ belongs to $C_{is}$.
\end{lemma}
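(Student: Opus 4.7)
}
I will follow the pattern of the proof of Lemma \ref{Reformulation}, adding two geometric ingredients specific to the horosphere construction: (a) the shadow nesting $\mathcal{S}(q(1)\dots q(m); \R^{n-1}) \subset \mathcal{S}(q(1)\dots q(m)|_n; \R^{n-1})$ for $n \leq m$, which is immediate from the inductive definition of the cubes; and (b) convexity of the hyperbolic distance function, which powers the remark following the definition of $\sim_{\bar\e_0}$ and lets me truncate such a relation to any shorter subsegment.

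For part (1) I split on whether $m<m_0$ or $m\geq m_0$. If $m<m_0=2+\bar s_0+\bar\ell(\bar s_0+1)$, any potential witness $(\xi,\psi,i,s,l)$ of badness with $s>\bar s_0$ and $i+s+l\leq m$ necessarily has $l<\bar\ell(\bar s_0+1)$, so by \eqref{Properties} we get $\bar\varphi(l)<\bar s_0+1\leq s$, and the good-cube condition holds automatically. For $m\geq m_0$ I argue by contrapositive. Given a witness with $s\leq\bar\varphi(l)$, set $\bar l:=\bar\ell(s)$; by \eqref{Properties} and the assumption $\bar\ell(\bar s_0)\geq 1$ we have $1\leq\bar l\leq l$, and $n:=i+s+\bar l$ satisfies $(i,s)\in A(n)$ with $m_0\leq n\leq m$. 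Convexity of distance in $\H^n$ restricts the $\sim_{\bar\e_0}$ relation to the shorter segment, giving $\psi([i,i+\bar l](\xi))\sim_{\bar\e_0} [i+s,n](\xi)$; the shadow nesting places $\xi\in\mathcal{S}(q(1)\dots q(m)|_n;\R^{n-1})$, so $q(1)\dots q(m)|_n\notin C_{is}$.

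For part (2) I argue by contradiction. Assume $\bar\gamma$ fails to be $\bar\varphi$-aperiodic at some $i\in\N$, yielding $s>\bar s_0$ and $l\in\N$ with $d(\bar\gamma(i+j),\bar\gamma(i+s+j))<\bar\e_0$ for every $j\in\{0,\dots,l\}$ and $s\leq\bar\varphi(l)$. The central step is to lift this discrete closeness in $M$ to a \emph{single} isometry $\psi\in\Gamma$ on $\H^n$ realising $\psi([i,i+l](\eta))\sim_{\bar\e_0} [i+s,i+s+l](\eta)$. At $j=0$ such a $\psi$ exists and is unique since $\bar\e_0<i_M$; I then propagate to $j=l$ by induction on $j$, using the triangle inequality $d(\psi[i+j+1](\eta),[i+s+j+1](\eta))\leq \bar\e_0+2r_0$ together with the bound $\bar\e_0+2r_0<2i_M-\bar\e_0$ to rule out the closeness at $j+1$ being realised by a lift in a different $\Gamma$-orbit. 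Once $\sim_{\bar\e_0}$ is established for this single $\psi$, I set $\bar l:=\bar\ell(s)\leq l$ and $n:=i+s+\bar l$; convexity gives $\psi([i,i+\bar l](\eta))\sim_{\bar\e_0}[i+s,n](\eta)$, and the nesting $\eta\in\mathcal{S}(q(1)\dots q(n);\R^{n-1})$ then forces $q(1)\dots q(n)\notin C_{is}$, contradicting the hypothesis.

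The main obstacle is the induction in part (2): ensuring that no lift of $\bar\gamma(i+j)$ in a different $\Gamma$-orbit can also be within $\bar\e_0$ of $[i+s+j](\eta)$. This is precisely where the metric hypotheses on $r_0$ and $\bar\e_0$ enter — specifically, the role of $\ln 2<r_0$ together with the accompanying control on $\bar\e_0$ that keeps $r_0+\bar\e_0<i_M$, matching the restriction flagged in the remark after Theorem \ref{TheoremGeodesic}.
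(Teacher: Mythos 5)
Part $(1)$ of your proposal is essentially the paper's argument and is fine. The problem is the propagation step in part $(2)$. You only know closeness at the discrete times $j\in\{0,\ldots,l\}$, so when you pass from step $j$ to step $j+1$ the triangle inequality costs you the extra $2r_0$ travelled by both geodesic segments, and to conclude $\psi_{j+1}=\psi_j$ from the $2i_M$-separation of $\Gamma$-orbits you need $2\bar\e_0+2r_0< 2i_M$, i.e. $r_0+\bar\e_0<i_M$. This inequality is \emph{not} among the hypotheses of Theorem \ref{TheoremGeodesic}, which only assume $\ln(2)<r_0<\bar\e_0<i_M$; nor does the remark you cite supply it (that remark concerns the lower bound $\ln 2$ on the injectivity radius). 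Indeed, in the application inside the proof of Theorem \ref{MainThm} one has $\bar\e_0=r_0+\e_0$ with only $r_0+\e_0<i_M$ guaranteed, so $r_0+\bar\e_0=2r_0+\e_0$ can exceed $i_M$; your induction then cannot rule out that the closeness at time $j+1$ is realised by a different deck transformation. So as written the single-isometry claim, which is the heart of part $(2)$, is not established.

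The paper closes exactly this hole differently: it first upgrades the discrete closeness to closeness of the \emph{continuous} geodesics, $d(\gamma((i+t)r_0),\gamma((i+s+t)r_0))<\bar\e_0$ for all real $0\le t\le l$, using local convexity of the distance (as in Lemma \ref{ContCond}), and then uses discreteness of $\Gamma$ to subdivide the lifted segment into finitely many subintervals, on each of which a single $\psi_{j+1}\in\Gamma$ realises the $\sim_{\bar\e_0}$ relation. At a junction time the two adjacent isometries are both within $\bar\e_0$ of the \emph{same} point, so only $2\bar\e_0<2i_M$, i.e. $\bar\e_0<i_M$, is needed to force $\psi_{j+1}=\psi_{j+2}$ (freeness of $\Gamma$ then gives equality of the isometries). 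Your argument can be repaired along the same lines: derive the continuous closeness first and propagate over arbitrarily small time steps (or adopt the paper's subdivision), so that the inflation term can be made smaller than $i_M-\bar\e_0$; but the step-size-$r_0$ induction with the unproven bound $r_0+\bar\e_0<i_M$ is a genuine gap.
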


 \begin{proof}
For $(1)$, let first $m<m_0$. Let $i$, $s \in \N$, $l\in \N_0$ such that $s > \bar s_0$ and $i+s+l\leq m < 2+\bar s_0+\bar \ell(\bar s_0+1)$. In particular, $l< \bar \ell(\bar s_0+1)$ so that $\varphi(l)<\bar s_0+1 \leq s$ and every cube $q(1)\ldots q(m)$ follows to be good.  
 
Now let $m\geq m_0$. 
Assume by absurd that $q(1)\ldots q(m)$ is not good and let $\xi \in \cal{S}(q(1)\ldots q(m);  \R^{n-1})$ and $\psi \in \Gamma$ such that for some $i\in\N$, $l\in \N_0$, we have
\be \nonumber
	\psi \big( [i, i+l](\xi) \big) \sim_{\bar \e_0}  [i+s, i+s+l](\xi),
\ee
where $s> \bar s_0$ with $i+s+l\leq m$ and $s\leq \bar \varphi(l)$. 
Hence, $\bar \ell (s) \leq l$ and for $n:=i+s+\bar \ell(s)$ we have in particular by \eqref{Convexity}, 
\be \nonumber
	\psi \big( [i, i+\bar \ell(s)](\xi) \big) \sim_{\bar \e_0}  [i+s, n](\xi).
\ee
Hence, we see that $q(1)\ldots q(m)\lvert_n \not \in C_{is}$ where $(i,s)\in A(n)$ for $n\leq m$; a contradiction.

For $(2)$, assume that $\bar \gamma:= \overline{\pi \circ [r_0, \infty)(\eta)}$ is not $\bar \varphi$-aperiodic at time $i\in \N$. Then there must be a shift $s\in \N$ with $s > \bar s_0$,  and $l \in \N_0$ 
such that 
\be \nonumber
	d( \bar \gamma(i + j),\bar  \gamma(i+s+j)) < \bar \e_0 \ \ \ \text{ for all } j \in \{0,\ldots ,l\},
\ee
where $s \leq \bar \varphi(l)$. 
Since $\bar \e_0<i_M$ and the distance function is convex, we also have $d(\gamma((i + t)r_0),  \gamma((i+s+t)r_0) < \bar \e_0$ for all $0 \leq t \leq l$ for the corresponding extended geodesic $\gamma : \R \to M$. 
By discreteness of $\Gamma$, there exist finitely many isometries $\psi_1$,\ldots ,$\psi_q \in \Gamma$ and a subdivision of the interval $[ir_0, (i+l)r_0]$ into $[l_0r_0, l_1r_0], [l_1r_0, l_2r_0], \ldots , [l_{q-1}r_0, l_qr_0]$ where $l_0=i$ and $l_q=i+l$ and $l_j\in \R$, such that (with analogous notation as above)
\be \nonumber
	\psi_{j+1} \big( [l_j, l_{j+1}](\eta) \big) \sim_{\bar \e_0} [s+l_j, s+l_{j+1}](\eta), \ \ \  j=0,\ldots ,q-1.
\ee
We thus have $d( \psi_{j+1} \big( [l_{j+1}](\eta) \big), [s+l_{j+1}](\eta)) < \bar \e_0$ and $d( \psi_{j+2} \big( [l_{j+1}](\eta) \big), [s+l_{j+1}](\eta)) < \bar \e_0$.
Since $\bar \e_0 < i_M$ and every orbit of $\Gamma$ is $2i_M$-separated (that is, for $\psi$, $\bar \psi \in \Gamma$ we have $d(\psi x, \bar \psi x) \geq 2 i_M$ for any $x\in \H^n$) it follows from the triangle inequality that $\psi_{j+1} \big( [l_{j+1}](\eta) \big) =  \psi_{j+2} \big( [l_{j+1}](\eta) \big)$; hence $\psi_{j+1} = \psi_{j+2}$ for all $j =0,\ldots ,q-2$ since $\Gamma$ acts freely.
Therefore, we have an isometry $\psi \in \Gamma$ such that  
 \be \nonumber
 	 \psi \big( [i, i+l](\eta) \big) \sim_{\bar \e_0}  [i+s, i+s+l](\eta)
\ee
 where $s\leq \bar \varphi(l) $. The proof is now finished analogously to the case of $(1)$.
\end{proof}

\noindent In view of Lemma \ref{GoodEquiv}, let for $m \geq m_0$,
\be \nonumber
	\cal{Q}^g(m) = \{ q(1)\ldots q(m) \in \mathcal{Q}(m) : q(1)\ldots q(m)\lvert_n \in C_{is} \text{ for all } (i, s)\in A(n), n \leq m\},
\ee
 and   $Q^g(m)= \mathcal{Q}(m)$ for $m< m_0$, which is a subset of all good cubes at step $m$. 

\begin{lemma}  Assume that condition \eqref{D} is satisfied.
Then, for $m\in \N$,
\be \label{Increasement}
	\lvert \cal{Q}^g(m+1) \rvert \geq k \lvert \cal{Q}^g(m) \rvert -  \bar c \cdot \sum_{ C_{is} \in \mathcal{C}_{m+1}}  \lvert \cal{Q}^g(i+s-1) \rvert,
\ee
where $\bar c$ is a constant depending only on $n$, $i_M$ and $\bar  s_0$, and is strictly decreasing in $\bar s_0$. 
\end{lemma}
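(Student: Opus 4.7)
The strategy is to carry out the same telescoping count as in Lemma \ref{Iteration}, with the shadow tree playing the role of the alphabet. Set
\be \nonumber
L := \{q \in \cal{Q}(m+1) : q\lvert_m \in \cal{Q}^g(m)\}.
\ee
By the branching step of the shadow construction, each cube in $\cal{Q}^g(m)$ admits exactly $2^{n-1}$ shadow children in $H_{(m+1)r_0}$, so $\lvert L \rvert = 2^{n-1} \lvert \cal{Q}^g(m) \rvert$ (this is the coefficient $k$ in the statement; in the geodesic setting the alphabet has size $2^{n-1}$). Since $\cal{Q}^g(m+1) = L \cap \bigcap_{C_{is} \in \cal{C}_{m+1}} C_{is}$, we immediately obtain
\be \nonumber
\lvert \cal{Q}^g(m+1) \rvert \geq 2^{n-1} \lvert \cal{Q}^g(m) \rvert - \sum_{C_{is} \in \cal{C}_{m+1}} \lvert L \cap C_{is}^c \rvert,
\ee
and the lemma reduces to the uniform estimate $\lvert L \cap C_{is}^c \rvert \leq \bar c \cdot \lvert \cal{Q}^g(i+s-1) \rvert$ for each $(i,s) \in A(m+1)$.

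\textbf{Reduction to a geometric counting problem.} Fix $(i,s) \in A(m+1)$ and partition $L$ by restriction to level $i+s-1$. Any $q \in L$ satisfies $q\lvert_{i+s-1} \in \cal{Q}^g(i+s-1)$ because goodness at level $m$ passes to every smaller level. Fix some $q' \in \cal{Q}^g(i+s-1)$; the goal is to bound, independently of $q'$, the number of extensions $q \in L$ with $q\lvert_{i+s-1} = q'$ that lie in $C_{is}^c$. If $q$ is such an extension, then there exist $\xi \in \cal{S}(q; \R^{n-1})$ and an isometry $\psi \in \Gamma$ with $\psi([i, i+\bar \ell(s)](\xi)) \sim_{\bar \e_0} [i+s, m+1](\xi)$. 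Since the horospherical projection of $\xi$ to $H_{(m+1)r_0}$ identifies $q$ uniquely among level-$(m+1)$ cubes, it suffices to bound the number of pairs $(\psi, \xi)$ that can occur modulo this identification.

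\textbf{Counting $(\psi,\xi)$ and monotonicity in $\bar s_0$.} The initial endpoint constraint $d(\psi([i](\xi)), [i+s](\xi)) < \bar \e_0$ forces $\psi([i](\xi))$ to sit in the $\bar \e_0$-neighborhood of the compact set $\cal{S}(q'; H_{(i+s)r_0})$, whose hyperbolic diameter is controlled by $r_0$ and $R$. Using that $\Gamma$-orbits are $2i_M$-separated and $\bar \e_0 < i_M$, a standard volume comparison between hyperbolic balls and injectivity-radius balls bounds the number of admissible $\psi$ by the ratio of integrals of $\sinh(t)^{n-1}$ appearing in \eqref{ComputeConstant}. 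With $\psi$ fixed, the terminal constraint $\psi([i+\bar\ell(s)](\xi)) \sim_{\bar \e_0} [i+s+\bar\ell(s)](\xi)$ combined with \eqref{Heintze} and \eqref{Heintze2} localizes the horospherical projection $[m+1](\xi)$ to a Euclidean ball whose radius is at most $\bar \e_0 \cdot e^{-(\bar\ell(s)+s_0\text{-slack})r_0}$ times a factor bounded in terms of $\cosh(i_M)$. Because $r_0 > \ln(2)$, the cubes at level $m+1$ are Euclidean-separated on a scale comparable to $R$, and the number of them meeting this ball is bounded by the combinatorial factor $\lceil (3\cosh(i_M)\sqrt{n+1})^{n-1} \rceil$. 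Multiplying the two bounds produces $\bar c$. The monotonicity $\bar c(\bar s_0) \searrow$ follows because only shifts $s > \bar s_0$ are counted, so $\bar \ell(s) \geq \bar \ell(\bar s_0)$; a larger $\bar s_0$ enforces a longer forced matching and, via \eqref{Heintze}, a tighter Euclidean localization of $\xi$.

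\textbf{Main obstacle.} The delicate point is the last paragraph: translating the ambient hyperbolic $\bar \e_0$-closeness along an entire segment into a tight Euclidean bound on the horosphere shadows. In particular, the naive form of the matching condition only provides a single isometry $\psi$ pointwise along $[i, i+\bar\ell(s)](\xi)$; one must rule out that different isometries act on different sub-segments. This is exactly the reduction carried out in the proof of Lemma \ref{GoodEquiv}(2): the $2i_M$-separation of $\Gamma$-orbits together with $\bar \e_0 < i_M$ and freeness of the action forces all such isometries to coincide, which is why the hypothesis $r_0 > \ln(2)$ (and the injectivity-radius bound) enters explicitly in the value of $\bar c$.
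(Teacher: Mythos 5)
Your proposal follows the paper's proof essentially step for step: the same telescoping over the conditions $C_{is}\in\mathcal{C}_{m+1}$, the same partition of $L$ by the restriction to level $i+s-1$ (using that goodness passes to sub-levels), and the same bound on each class as a product of the number of admissible isometries $\psi$ (via the $2i_M$-separation of $\Gamma$-orbits and a volume comparison) with the number of level-$(m+1)$ cubes meeting the $\bar\e_0$-neighborhood of $\psi(W\cap H_{i+s+\bar\ell(s)})$ (via \eqref{Heintze} and \eqref{Heintze2}), which is exactly how the paper's $\bar c=\bar c_1\bar c_2$ arises. This is correct and is the paper's argument; your closing remark about multiple isometries along the segment is rightly deferred to Lemma \ref{GoodEquiv}(2) and is not needed here, since $C_{is}^C$ already supplies a single $\psi$.
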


\begin{proof} 
If $m+1 < m_0$ then $\mathcal{C}_{m+1}$ is empty and the claim follows. 
Hence assume $m+1 \geq m_0$.
Let 
\be\nonumber
	L= \{ q(1)\ldots q(m+1) \in \mathcal{Q}(m+1) : q(1)\ldots q(m+1)\lvert_m \in \cal{Q}^g(m) \}
\ee
 and note that $\lvert L \rvert = k \lvert Q^g(m) \rvert$. Then
\be\nonumber
	\cal{Q}^g(m+1) = L \cap (\bigcap_{C_{is} \in \mathcal{C}_{m+1}} C_{is}) 
	= L \setminus ( \bigcup_{C_{is} \in \mathcal{C}_{m+1}} (L \cap C_{is}^C)),
\ee
where $C_{is}^C$ is the complement of $C_{is}$. 
Fix some $C=C_{is} \in \mathcal{C}_{m+1}$.
Define 
\be	\nonumber
	Q =\{ q(1)\ldots q(m+1) \lvert_{i+s-1} \in \mathcal{Q}(i+s-1) : q(1)\ldots q(m+1) \in L \},
\ee
One checks that $\lvert Q \rvert \leq \lvert \cal{Q}^g(i+s-1) \rvert$. Let $L = \cup_{q \in Q} L_q$ where 
 \be \nonumber
 	L_q = \{q(1)\ldots q(m+1) \in L : q(1)\ldots q(m)\lvert_{i+s-1} =q\}.
\ee
It remains to show that each $L_q \cap C^C$ contains at most $\bar c $ cubes; in this case,
\be\nonumber
	\lvert L \cap C^C \rvert  \leq 
	\bar c \cdot \lvert Q\rvert \leq
	\bar c \cdot \lvert \cal{Q}^g(i+s-1) \rvert.
\ee
The following claim concludes the proof.
\end{proof}

\begin{claim} \label{Claim}
 $\lvert L_q \cap C^C \rvert \leq \bar c \cdot \lvert \cal{Q}^g(i+s-1) \rvert$.
\end{claim}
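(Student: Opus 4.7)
The strategy is to bound $\lvert L_q \cap C^C \rvert$ by a constant depending only on $n$ and $i_M$, which --- since $\lvert \cal{Q}^g(i+s-1) \rvert \geq 1$ --- will imply the stated inequality. By definition, $Q' \in L_q \cap C^C$ means there exist $\xi \in \cal{S}(Q'; \R^{n-1})$ and $\psi \in \Gamma$ satisfying $\psi([i, i+\bar \ell(s)](\xi)) \sim_{\bar \varepsilon_0} [i+s, m+1](\xi)$; since $(i,s) \in A(m+1)$ forces $i+s+\bar \ell(s) = m+1$, the two segments have equal length $\bar \ell(s)\, r_0$, and by convexity of the distance function (cf.\ \eqref{Convexity}) the $\bar \varepsilon_0$-closeness at the two endpoints propagates throughout. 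I plan to decompose
\[
 \lvert L_q \cap C^C\rvert \;\leq\; \#\{\text{admissible } \psi \in \Gamma\} \cdot \max_{\psi}\#\{\text{admissible } Q' \in L_q \text{ for this } \psi\}
\]
and bound each factor by the corresponding ceiling in \eqref{ComputeConstant}.

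For the isometry count, pick any reference $\xi_0 \in \cal{S}(q; \R^{n-1})$ and set $p_0 := [i](\xi_0)$. The starting-end relation $d(\psi([i](\xi)), [i+s](\xi)) < \bar \varepsilon_0$ for some $\xi \in \cal{S}(q; \R^{n-1})$, combined with the uniform bound $d([i](\xi), [i](\xi_0)) \leq \diam(\cal{S}(q; H_{i r_0}))$ (controlled through \eqref{Heintze} and \eqref{Heintze2} in terms of $R$, $r_0$, $i_M$, $n$), confines $\psi(p_0)$ to a hyperbolic ball $B(p_1, D)$ with $p_1 \in \cal{S}(q; H_{(i+s)r_0})$ fixed and $D = D(n, i_M)$ explicit. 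Because $\Gamma$ acts freely on $\H^n$ with injectivity radius $i_M$, the balls $B(\psi(p_0), i_M/2)$ for $\psi \in \Gamma$ are pairwise disjoint, so a standard volume-packing argument yields
\[
 \#\{\psi \in \Gamma : \psi(p_0) \in B(p_1, D)\} \;\leq\; \frac{\int_0^{D + i_M/2}\sinh(t)^{n-1}\,dt}{\int_0^{i_M/2}\sinh(t)^{n-1}\,dt},
\]
which after consolidating constants so that $D + i_M/2 \leq 5 i_M + 4\ln(\sqrt{n+1}/2)$ is the second ceiling of \eqref{ComputeConstant}.

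For the cube count, fix an admissible $\psi$. If $Q' \in L_q$ is admissible via $\psi$, witnessed by some $\xi \in \cal{S}(Q'; \R^{n-1})$, then $[m+1](\xi) \in Q'$ satisfies $d([m+1](\xi), \psi([i+\bar \ell(s)](\xi))) < \bar \varepsilon_0$. As $\xi$ varies over $\cal{S}(q; \R^{n-1})$, the set $\{[i+\bar \ell(s)](\xi)\} \subset H_{(i+\bar \ell(s))r_0}$ has horospherical side $e^{(\bar \ell(s) - s + 1) r_0} R$ by \eqref{Heintze}, which is uniformly bounded because \eqref{D} forces $\bar \ell(s) \leq s$ (as $\bar \varphi(l) > l$ for $l \geq \bar s_0$ inverts to $\bar \ell(s) \leq s$). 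Since $\psi$ preserves hyperbolic diameters, its image family has bounded hyperbolic diameter; converting back to horospherical coordinates on $H_{(m+1)r_0}$ via \eqref{Heintze2} together with the $\bar \varepsilon_0$-closeness confines $[m+1](\xi)$ to a region $U \subset H_{(m+1)r_0}$ of horospherical $(n-1)$-content at most $\lceil(3\cosh(i_M)\sqrt{n+1})^{n-1}\rceil$. As the cubes of $\cal{Q}(m+1)$ are pairwise disjoint of horospherical side $R = 1$, at most this many cubes $Q'$ can meet $U$, yielding the first ceiling of \eqref{ComputeConstant}.

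The main obstacle is the horospherical bookkeeping in this last step, because $\psi$ maps horospheres based at $\infty$ to horospheres based at $\psi^{-1}(\infty) \in \partial_\infty \H^n$, so $\psi(H_{(i+\bar \ell(s)) r_0})$ is in general not one of the reference horospheres $H_t$. The rescue is the two-endpoint closeness combined with $\bar \varepsilon_0 < i_M$: it forces $\psi(H_{(i+\bar \ell(s))r_0})$ to be nearly tangential to $H_{(m+1)r_0}$ on the relevant neighborhood, so that the horospherical length elements of the two horospheres differ there by a factor bounded by $\cosh(i_M)$ --- precisely the origin of the constant $3\cosh(i_M)\sqrt{n+1}$ in \eqref{ComputeConstant}. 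The hypothesis $i_M > \ln 2$ is used both to keep $r_0 > \ln 2$ (so that each shadow step admits $2^{n-1}$ disjoint child cubes) and to maintain $\bar \varepsilon_0 < i_M$, under which the convexity bound \eqref{Convexity} is valid throughout the full length-$\bar \ell(s) r_0$ tube.
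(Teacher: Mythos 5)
Your proposal is correct and is essentially the paper's own proof of the claim: the same two-factor count, bounding the number of admissible $\psi\in\Gamma$ by a volume-packing argument using the $2i_M$-separation of a $\Gamma$-orbit (the paper's $\bar c_2(s)$), and, for each fixed $\psi$, the number of cubes of $\cal{Q}(m+1)$ meeting the $\bar\e_0$-neighborhood of $\psi(W\cap H_{(i+\bar \ell(s))r_0})$ via \eqref{Heintze}, \eqref{Heintze2}, the bound $\bar\ell(s)<s$ from \eqref{D}, and disjointness of the unit cubes in $H_{(m+1)r_0}$ (the paper's $\bar c_1(s)$). One caveat: the ``near-tangentiality/$\cosh(i_M)$'' comparison of intrinsic horosphere metrics that you single out as the main obstacle is neither proved nor needed --- since every candidate point $[m+1](\xi)$ already lies on the reference horosphere $H_{(m+1)r_0}$, it suffices (as in the paper) to bound pairwise \emph{hyperbolic} distances by $2\bar\e_0$ plus the hyperbolic diameter of $\psi(W\cap H_{(i+\bar\ell(s))r_0})$, which is an isometric invariant, and convert once with \eqref{Heintze2}; your counting goes through without that unsubstantiated step, and only the resulting numerical constant (irrelevant for the claim, which merely needs some $\bar c=\bar c(n,i_M)$) would differ from \eqref{ComputeConstant}.
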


\noindent For the proof of the claim note that if  \eqref{C} is satisfied, then for all $l \geq \bar s_0$,
\be \nonumber
   \lfloor \bar \varphi(l) \rfloor > l,
\ee
which implies that  for all $s > \bar s_0$, 
\be \label{BedingungL0}
	\bar \ell(s)<s.
\ee
To see this, assume $\bar \ell(s) \geq s$ for some $s >\bar  s_0$. 
Then, by definition of $\bar \ell$, $\bar \varphi(j) <s$ for all $s>j \in \N_0$.
In particular, for $\bar s_0 < s $ we have $\bar \varphi(\bar s_0) \geq \lfloor \bar \varphi(\bar s_0) \rfloor$; a contradiction to $ \lfloor \bar \varphi(\bar s_0) \rfloor > \bar s_0$.

\begin{proof}[Proof of the Claim \ref{Claim}.]
$L_q $ consists of cubes of the form $q \cdot q(i+s)\ldots q(m+1) \in \cal{Q}(m+1)$.
Hence, consider the point set $W$ of all geodesic segments $[i, i+\bar \ell (s)](\xi)$ where $\xi \in \cal{S}(q, \R^{n-1})$; 
see Figure \ref{W}.
Since $s>\bar s_0$ we have $\bar \ell(s)<s$ by \eqref{BedingungL0}, and therefore $s-1-\bar \ell(s) \geq 0$.
Moreover, by definition, the cube $q$ in $H_{(i+s-1)r_0}$ has $h$-edge lengths $R$.
Thus from \eqref{Heintze}, the subset $H_{i+\bar \ell (s)} \cap W$ is isometric to an Euclidean cube with $h$-edge length
\be \nonumber
	 e^{-(i+s-1)r_0 + (i+\bar \ell(s))r_0} R= e^{-(s-1 - \bar \ell(s))r_0} R \leq R.
\ee
Since an Euclidean cube in $\E^{n-1}$ of edge length $L$ has diameter at most $\sqrt{n-1}L$, we obtain from \eqref{Heintze2} that the $d$-diameter of $H_{i+\bar \ell (s)} \cap W$ is bounded above by
\be \label{Length1}
	2 \arcsinh(e^{-(s-1 - \bar \ell(s))r_0} \sqrt{n-1} R/2).
\ee
In the same way, the $h$-edge length of $H_{i r_0} \cap W$ is given by 
\be \label{Bound}
	e^{-(s-1)r_0} R.
\ee

Now, by definition, for every $q \cdot q(i+s)\ldots q(m+1) \in L_q \cap C^C$ there exists
$\psi \in \Gamma$ such that  $\psi \big( [i,i+\bar \ell (s)](\xi) \big) \sim_{\bar \e_0} [i+s, m+1](\xi) $ for some  $\xi \in \cal{S}(q, \R^{n-1})$. 
In particular, $x:=[m+1](\xi)$ must belong to the $\bar \e_0$-neighborhood of $\psi(W \cap H_{i+s+\bar \ell(s)})$.
Thus, we want to estimate the maximal number of cubes in $\cal{Q}(m+1)$ which  intersect with the $\bar \e_0$-neighborhood of $\psi(W \cap H_{i+s+\bar \ell(s)})$.
Let therefore also $y \in H_{(m+1)r_0}$ belong to the $\bar \e_0$-neighborhood of $\psi(W \cap H_{i+s+\bar \ell(s)})$.
By the triangle inequality and by \eqref{Length1}, we have
\be	\nonumber
	d(x,y) \leq 2 \bar \e_0 + 2 \arcsinh(e^{-(s-1 - \bar \ell(s))r_0} \sqrt{n-1} R/2).
\ee
Therefore, again from  \eqref{Heintze2}, the $h$-diameter of the intersection of the $\bar \e_0$-neighborhood of $\psi(W \cap H_{i+s+\bar \ell(s)})$ with $H_{(m+1)r_0}$ is bounded above by 
\be \nonumber
	 \bar r_1(s) := 2\sinh( \bar \e_0 +  \arcsinh(e^{-(s-1 - \bar \ell(s))r_0} \sqrt{n-1} R/2) ).
\ee
On the other hand, the cubes $q\cdot q(i+s)\ldots q(m+1) \in \cal{Q}(m+1)$ are disjoint and have Euclidean volume $R^{n-1}$.
Therefore, we set
\be \nonumber
 	\bar c_1(s):= \lceil \frac{ (\bar r_1(s) + \sqrt{n-1}R)^{n-1} }{ R^{n-1}  } \rceil.  
\ee
Hence,  the $\bar \e_0$-neighborhood of $\psi(W \cap H_{i+s+\bar \ell(s)})$ can intersect at most $\bar c_1 (s)$ qubes in $\mathcal{Q}(m+1)$.
Since $q(1)\ldots q(m)$ is good for every $q(1)\ldots q(m+1) \in L_q$, 
we conclude that, with respect to $\psi$, at most $\bar c_1(s)$ cubes can become bad in $L_q \cap C^C$.

\begin{center} \label{W}
 \includegraphics[scale=0.4]{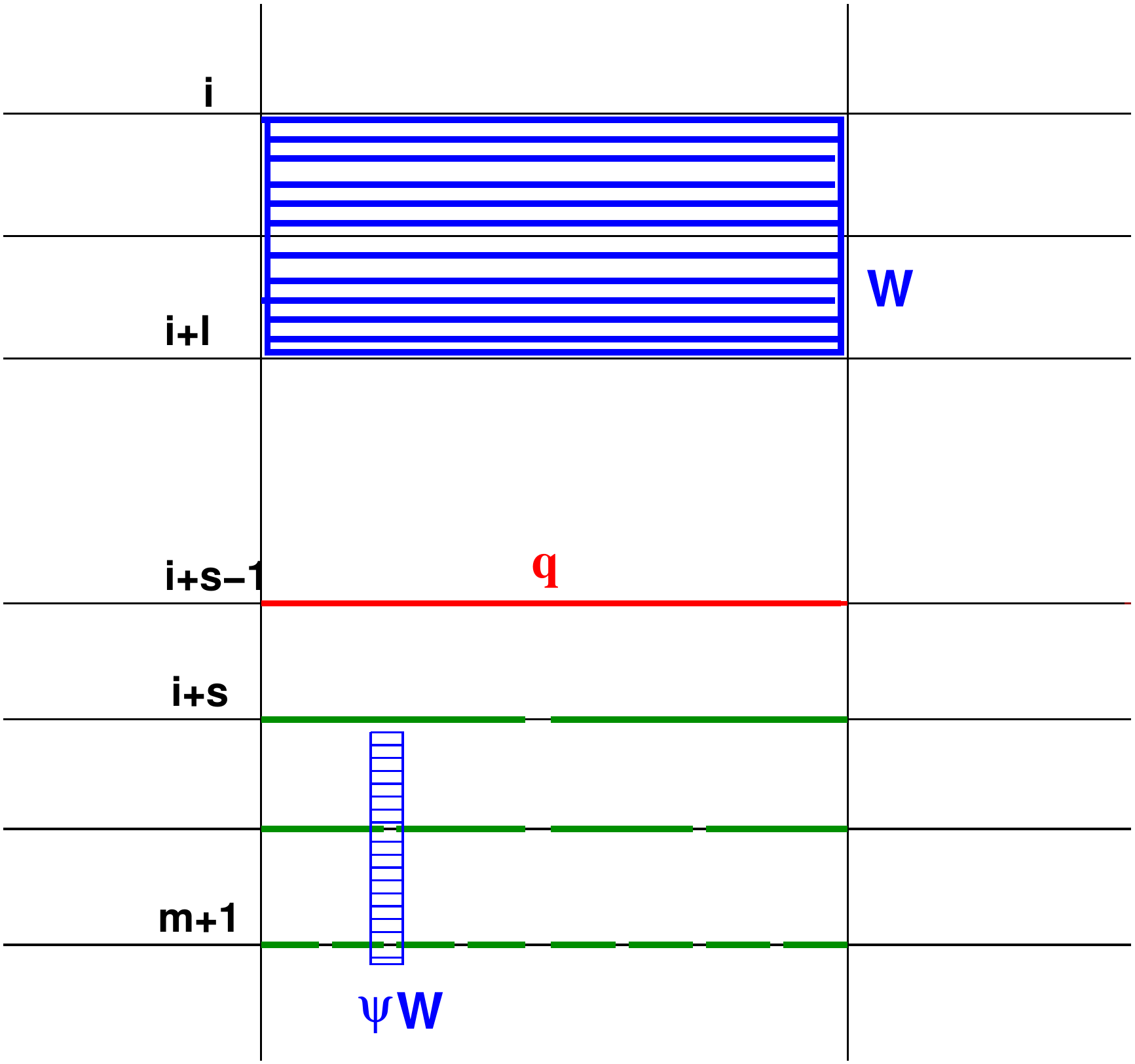} \\Figure \ref{W}: $n=2$.
\end{center}

Now, let $\bar y$ be the center of $W \cap H_{i r_0}$, which is isometric to a cube in the Euclidean space  of edge length $e^{-(s-1)r_0}R$ by \eqref{Bound} and contained in the cube $q \lvert_{i}$. From \eqref{Heintze2}, $W \cap H_{i r_0}$  must be contained in the hyperbolic ball $B_d(\bar y, \bar r_2(s))$, where 
\be \nonumber
	\bar r_2(s)=2 \arcsinh(e^{-(s-1)r_0}\sqrt{n-1}R/4).
\ee
Note that if there is some point $p \in W \cap H_{i r_0}$ and some $\psi \in \Gamma$ such that $d(\psi p, \bar q) < \bar \e_0$, where $\bar q := \cal{S}(q, H_{(i+s)r_0})$, then $d( \psi \bar y, \bar q) < \bar \e_0+ \bar r_2(s)$.
In particular, for every cube $q \cdot q(i+s)\ldots q(m+1) \in L_q \cap C^C$ there exists such an isometry $\psi$. 
But since the orbit $\Gamma \bar y$ is $2 i_M$-separated, the open metric balls $B(\psi \bar y, i_M)$, $\psi \in \Gamma$, are disjoint and there can only be finitely many, say $\bar c_2(j)$, intersecting the max$\{\bar \e_0+\bar r_2(s)-i_M,0\}$-neighborhood of $\bar q$. 
In fact, from \eqref{Heintze} and \eqref{Heintze2}, the $h$-diameter of $\bar q$ is bounded above by $e^{r_0}\sqrt{n-1}R$ and $\bar q$ must be contained in a hyperbolic ball of radius  $ 2 \arcsinh(e^{r_0}\sqrt{n-1}R/4)$. 
Therefore, $\bar c_2(s)$ is bounded above by
\be \nonumber
	 \lceil \frac{ \text{vol}(B \big( 2 \arcsinh(e^{r_0}\sqrt{n-1}R/4) +  2\arcsinh(e^{-(s-1)r_0}\sqrt{n-1}R/4) + \bar \e_0 \big) )} { \text{vol}(B(i_M/2)) } \rceil.
\ee

Since both, $\bar c_1(s)$ and $\bar c_2(s)$ are non-increasing in $s$, we conclude the claim by setting $\bar c:= \bar c_1 (\bar s_0+1)\bar c_2(\bar s_0+1)$.
\end{proof}

Analogously to the proof of Lemma \ref{CountConditions}, the previous Lemma yields the following.

\begin{lemma} \label{Increasement2}
Assume that condition \eqref{BedingungL0} is satisfied.
Then, for $m\in \N$,
\bea \nonumber
 \lvert \mathcal{Q}^g(m+1)\rvert&\geq &\big( k 
  -  \mathbf{1}_{\{\bar \ell(\bar s_0+1)=0\}} \bar c  \lfloor \bar \varphi(0) \rfloor \big) \lvert \mathcal{Q}^g(m) \rvert   \nonumber\\
  &-&\textstyle{ \bar c  \cdot \sum_{j=\max(\bar \ell(\bar s_0+1),1)}^{m}  (\lfloor \bar \varphi(j) \rfloor -\lfloor \bar \varphi(j-1) \rfloor )  \lvert \mathcal{Q}^g(m-j) \rvert}.\nonumber
\eea
\end{lemma}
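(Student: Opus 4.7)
The plan is to mimic the proof of Lemma \ref{CountConditions} almost line-for-line, starting from inequality \eqref{Increasement} and regrouping the sum over $C_{is}\in\mathcal{C}_{m+1}$ according to the value of $\bar\ell(s)$. Concretely, I would set, for $j\in\N_0$,
$$H_j := \{C_{is}\in\mathcal{C}_{m+1} : i+s-1=m-j\}.$$
For $C_{is}\in H_j$, the defining condition $(i,s)\in A(m+1)$ means $i+s+\bar\ell(s)=m+1$, and comparing with $i+s-1=m-j$ forces $\bar\ell(s)=j$. Hence $|H_j|\leq |\{s>\bar s_0 : \bar\ell(s)=j\}|$, and by construction $\sum_{C_{is}\in H_j}|\mathcal{Q}^g(i+s-1)| = |H_j|\cdot|\mathcal{Q}^g(m-j)|$.

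Next I would count $\{s>\bar s_0 : \bar\ell(s)=j\}$ using the equivalence $\bar\ell(s)\leq j \iff s\leq\bar\varphi(j)$ from \eqref{Properties}. Ignoring the constraint $s>\bar s_0$ gives the uniform upper bounds $|\{s\in\N:\bar\ell(s)=0\}|=\lfloor\bar\varphi(0)\rfloor$ and, for $j\geq 1$,
$$|\{s\in\N:\bar\ell(s)=j\}| = \lfloor\bar\varphi(j)\rfloor - \lfloor\bar\varphi(j-1)\rfloor.$$
The extra constraint $s>\bar s_0$ kills small values of $j$: if $j<\bar\ell(\bar s_0+1)$ then by definition of $\bar\ell$ every $s$ with $\bar\ell(s)\leq j$ obeys $s\leq\bar\varphi(j)<\bar s_0+1$, so $H_j=\emptyset$. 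Thus the index $j=0$ contributes only when $\bar\ell(\bar s_0+1)=0$ (equivalently $\lfloor\bar\varphi(0)\rfloor\geq\bar s_0+1$), while the remaining terms range over $j\geq\max(\bar\ell(\bar s_0+1),1)$.

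Inserting these bounds into \eqref{Increasement} and splitting the $j=0$ contribution (which multiplies $|\mathcal{Q}^g(m)|$ and carries the indicator $\mathbf{1}_{\{\bar\ell(\bar s_0+1)=0\}}$) from the $j\geq 1$ contributions produces exactly the asserted inequality. The whole argument is bookkeeping of the same form as Lemma \ref{CountConditions}; the single subtlety, relative to that sequence-counting argument, is the minimal-shift threshold $\bar s_0$, which is handled by the case split above. Condition \eqref{BedingungL0}, established from \eqref{D} in the preceding claim, guarantees that $\bar\ell$ is well-behaved near $\bar s_0$, so I do not anticipate any substantive obstacle beyond careful tracking of the two cases.
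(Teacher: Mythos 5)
Your proposal is correct and follows essentially the same route as the paper: the paper's own proof simply reuses the sets $H_j$ from Lemma \ref{CountConditions} and notes that $j=\bar\ell(s)\geq\bar\ell(\bar s_0+1)$ for $s>\bar s_0$ by monotonicity of $\bar\ell$, which is exactly your case split producing the indicator at $j=0$ and the truncated range $j\geq\max(\bar\ell(\bar s_0+1),1)$. Nothing further is needed.
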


\begin{proof}
Recall the definition of the set $H_j= \{C_{is} \in \cal{C}_{m+1}: i+s-1 = m-j\}$ in \eqref{Hs}. 
Since $\bar \ell$ is non-decreasing we have $j= m+1 -(i+s) = \bar \ell(s) \geq \bar \ell(\bar s_0+1)$ if $s>\bar s_0$.
\end{proof}

Finally, if moreover condition \eqref{C} is satisfied, then the same inductive proof as in Lemma \ref{AsymptoticGrowth} shows that the number of good cubes in $Q^g(m+1)$  increases in $m+1$ by the factor $c>1$; see \eqref{Formula}. 
Lemma \ref{GoodEquiv}.$(2)$ then shows the existence of a $\bar \varphi$-aperiodic geodesic $\bar \gamma : \N \to M$. 
Thus, we have shown the following.

\begin{lemma} Assume that conditions \eqref{D} and \eqref{C} are satisfied.
Then, for $m\in \N$,  $\lvert \mathcal{Q}^g(m) \rvert \geq c^m$.
In particular, there exists  a $\bar \varphi$-aperiodic geodesic $\bar \gamma : \N \to M$ with parameters $(\bar s_0, \bar \e_0, r_0)$.
\end{lemma}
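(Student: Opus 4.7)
The plan is to proceed by induction on $m$, mirroring the argument for sequences in Lemma \ref{AsymptoticGrowth}. For the base range $m < m_0$ every cube is good, so $\lvert \mathcal{Q}^g(m) \rvert = k^m = (2^{n-1})^m \geq c^m$ since condition \eqref{C} forces $c < 2^{n-1}$. I will in fact prove the stronger statement $\lvert \mathcal{Q}^g(m+1) \rvert \geq c \cdot \lvert \mathcal{Q}^g(m) \rvert$ for all $m \in \N$, from which $\lvert \mathcal{Q}^g(m) \rvert \geq c^m$ follows immediately.

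For the inductive step assume $\lvert \mathcal{Q}^g(n) \rvert \geq c \cdot \lvert \mathcal{Q}^g(n-1) \rvert$ for every $n \leq m$, which yields the monotonicity bound $\lvert \mathcal{Q}^g(m-j) \rvert \leq c^{-j} \lvert \mathcal{Q}^g(m) \rvert$ for each $0 \leq j \leq m$. Feeding this into Lemma \ref{Increasement2} and extending the summation to infinity gives
\begin{equation}
\nonumber
\lvert \mathcal{Q}^g(m+1) \rvert \geq \Bigl( 2^{n-1} - \bar c \sum_{j=\bar\ell(\bar s_0)}^{\infty} \frac{\lfloor \bar\varphi(j) \rfloor - \lfloor \bar\varphi(j-1) \rfloor}{c^j} \Bigr) \lvert \mathcal{Q}^g(m) \rvert,
\end{equation}
where I have absorbed the indicator term at $j=0$ into the sum starting at $\bar\ell(\bar s_0)$ using condition \eqref{D} (which ensures $\bar\ell(\bar s_0) \geq 1$ and hence that the $j=0$ contribution is vacuous when $\bar\ell(\bar s_0+1) \geq 1$; the case $\bar\ell(\bar s_0+1)=0$ is handled by a minor bookkeeping step). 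Condition \eqref{C} then bounds the parenthesis from below by $c$, closing the induction.

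Having established $\lvert \mathcal{Q}^g(m) \rvert \geq c^m > 0$ for all $m$, I extract a $\bar\varphi$-aperiodic geodesic as follows. The good cubes are organized into a $k$-ary tree where the children of $q(1)\ldots q(m) \in \mathcal{Q}^g(m)$ are those cubes $q(1)\ldots q(m)q(m+1) \in \mathcal{Q}^g(m+1)$ contained in its shadow; by Lemma \ref{GoodEquiv}.(1) a cube lies in $\mathcal{Q}^g(m+1)$ only if its restriction to length $m$ lies in $\mathcal{Q}^g(m)$, so every good cube of level $m+1$ is a descendant of a good cube of level $m$. Since this tree is finitely branching and infinite, König's Lemma produces an infinite branch $q(1)q(2)\ldots$, and the nested intersection $\{\eta\} = \bigcap_{m\in\N} \mathcal{S}(q(1)\ldots q(m); \R^{n-1})$ defines a point in $\R^{n-1}$. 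By Lemma \ref{GoodEquiv}.(2) the discrete geodesic $\bar\gamma = \overline{\pi \circ [r_0,\infty)(\eta)}$ is $\bar\varphi$-aperiodic at every positive time $i \in \N$ with the required parameters.

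The main obstacle is verifying the passage from Lemma \ref{Increasement2} to the clean bound involving the infinite sum in \eqref{C}: one needs to match up the index shift in the summation (starting from $\max(\bar\ell(\bar s_0+1),1)$ versus $\bar\ell(\bar s_0)$) and confirm that the indicator term $\mathbf{1}_{\{\bar\ell(\bar s_0+1)=0\}}\bar c \lfloor \bar\varphi(0) \rfloor$ is correctly absorbed. This is bookkeeping rather than conceptual difficulty; conditions \eqref{D} and \eqref{C} have been designed precisely so that this cancellation goes through, exactly as in the word-counting proof of Lemma \ref{AsymptoticGrowth}. A final subtlety is that this construction yields a one-sided discrete geodesic $\bar\gamma : \N \to M$; to extend to $\Z$ (as the theorem statement suggests) one invokes the same compactness argument used at the end of the proof of Theorem \ref{TheoremSequence}, iterating the geodesic flow backwards and passing to a subsequential limit using that $\bar\varphi$-aperiodicity is a closed condition.
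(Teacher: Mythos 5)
Your proof is correct and follows essentially the same route as the paper: the paper simply invokes the induction of Lemma \ref{AsymptoticGrowth} (via Lemma \ref{Increasement2} and condition \eqref{C}) to get $\lvert\mathcal{Q}^g(m)\rvert\geq c^m$ and then cites Lemma \ref{GoodEquiv}(2), exactly as you do. Your explicit K\"onig's-Lemma extraction of an infinite branch of good cubes, and your check that \eqref{D} kills the indicator term and allows the sum to be compared with the one in \eqref{C}, are just careful spellings-out of steps the paper leaves implicit.
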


Now, let $\bar \gamma : \N \to M$ be a $\bar \varphi$-aperiodic geodesic  (with parameters $(\bar s_0, \bar \e_0, r_0)$ and let $\gamma: \R \to M$ be the corresponding extended geodesic. Consider the sequence $v^n:=\phi^n \gamma'(r_0)$, $n\in \N$, in the compact space $SM$ and let $\gamma_0$ be an accumulation point.
The space of unit speed geodesics (identified with $SM$) is endowed with the topology of uniform convergence on bounded sets. Therefore note that a sequence $v^n$ converges to $v$ in  $SM$ if and only if for every $l\geq0$ and every $\tau>0$ there exists $N\in \N$ such that  for every $n\geq N$, $d(\gamma_{v^n}(t), \gamma_v(t)) < \tau$ for every $t\in [-l,l]$. Therefore $\bar \varphi$-aperiodicity can be shown to be a closed condition (similarly as in  Lemma \ref{ClosedCond}).
Since $\bar \gamma_{v^n}$ is $\bar \varphi$-aperiodic  beginning at $t_n \geq -(n-1)$ (with parameters $(\bar s_0, \bar \e_0, r_0)$), it follows that $\bar \gamma_0 : \Z \to M$ is $\bar \varphi$-aperiodic.
This completes the proof of Theorem \ref{TheoremGeodesic}.

\subsection{Proof of Theorem \ref{MainThm}.} 
For $\delta\in (0,1)$ choose $\bar \delta\in [\delta,1)$ such that for $r_0= \ln (3-\bar \delta)$ we have $\ln (3-\bar\delta) + \e_0 < i_M$.
Note  that $\tilde\delta = \bar \delta \ln(2) / \ln(3-\bar \delta)\to 1$ as $\bar \delta\to 1$ and assume therefore that $\tilde\delta>\delta$.
For $l\geq 0$ let $\bar \psi(l) = 2^{\bar \delta(n-1)l}$  so that its right inverse  $\lceil \frac{1}{\bar \delta(n-1)\ln(2)} \ln(s) \rceil$ is an unbounded function.
Then, for $c=\frac{1}{2}(2^{n-1}+2^{\bar \delta(n-1)})$, we have that for sufficiently large $\bar s_0 = \bar s_0(\bar \delta, n,i_M, \e_0) \in \N_0$ the conditions \eqref{D} and \eqref{C} are satisfied.
Thus, from Theorem \ref{TheoremGeodesic} there exists a discrete geodesic $\bar \gamma: \Z \to M$ which is $\bar\psi$-aperiodic with respect to $(\bar s_0, r_0+\varepsilon_0, r_0)$.
From Lemma \ref{ContCond} we obtain that $\gamma : \R \to M$ is continuously $ \psi$-aperiodic with parameters $s_0=(\bar s_0+1)r_0$ and $\e_0$, where for $l\geq r_0$,
\be\nonumber
\begin{array}{lcl}

		       \psi( l) &=& \ln(3- \bar \delta) \cdot \bar \psi(\frac{l}{\ln(3- \bar \delta)}-1)  - \ln(3- \bar \delta) \\
				&=& \frac{ \ln(3- \bar \delta)}{2^{\bar \delta(n-1)}} e^{\frac{\bar \delta\ln(2)}{ \ln(3- \bar \delta)} (n-1) l} -  \ln(3- \bar \delta) \\
				
				&=& \big( \frac{ \ln(3- \bar \delta)}{2^{\bar \delta(n-1)}} - \frac{\ln(3- \bar \delta)}{e^{  \tilde\delta (n-1)l }} \big) e^{\tilde\delta(n-1)l}  \\
				&=:& c(  \tilde\delta,l) \cdot e^{  \tilde\delta(n-1)l }= c( \tilde\delta,l) \varphi_{ \tilde\delta}(l).
\end{array}
\ee
Note that $c(  \tilde\delta, l)$ is increasing in $l$ and we restrict $\psi$ to the interval $[l_1,\infty)$ for some $l_1>\ln(3- \bar \delta)$ such that $c( \tilde\delta, l_1)>0$. 

We now translate the minimal shift $s_0$ into the minimal length $l_0$.  Let to this end $N:= \lceil \frac{ s_0}{2i_M}\rceil$. Assume that for some $t_0$ we have $d( \gamma(t_0 + t) ,  \gamma(t_0 +s +t) < \varepsilon_0$ for all  $0\leq t \leq l$ where $l\geq \max\{l_1, 3N  s_0 + 2i_M\} =:l_0$. 

First, we assume that $s \leq s_0$. 
Note that the function $t\mapsto d( \gamma(t_0 + t) , \gamma(t_0 +s +t)$ is not only convex but decreases and increases exponentially (see \cite{Brid}) so that we have 
$d(\gamma(t_0 + t) ,  \gamma(t_0 +s +t)< \e_0/4$ for all $s' \leq t \leq l-s'$ where $s'$ is sufficiently large, say $s'=2i_M$.
The closing lemma implies the existence of a closed geodesic nearby; in fact, we will prove the following Lemma.

\begin{lemma} \label{ClosedGeodesic} In this setting, there exists a closed geodesic $\alpha$ of period $p\leq s+\e_0/4$ such that (up to parametrization of $\alpha$),
\be \nonumber
	d(\alpha(t), \gamma(t_0+s' +t)) < \e_0/2  \quad \text{ for all } 0\leq t \leq s+ l-2s' - \e_0.
\ee
\end{lemma}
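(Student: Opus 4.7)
The plan is to prove the lemma as an instance of the Anosov closing lemma for the geodesic flow on the closed hyperbolic manifold $M$, by lifting to $\H^n$ and extracting a hyperbolic deck transformation whose axis projects to the desired closed geodesic $\alpha$.

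First I would lift $\gamma$ to $\tilde\gamma \colon \R \to \H^n$. The hypothesis $d(\gamma(t_0+t),\gamma(t_0+s+t))<\e_0/4<i_M$ on $[s',l-s']$, combined with the $2i_M$-separation of $\G$-orbits in $\H^n$, produces by continuity in $t$ and discreteness of $\G$ a single deck transformation $\psi \in \G$ such that
\be \nonumber
	d_{\H^n}(\psi\tilde\gamma(t_0+t),\tilde\gamma(t_0+s+t))<\e_0/4 \quad \text{for all } s' \leq t \leq l-s'.
\ee
Since $\G=\pi_1(M)$ is torsion-free and cocompact, all non-identity elements are hyperbolic (a closed hyperbolic manifold has no elliptic or parabolic deck transformations). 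Moreover $\psi \neq \mathrm{id}$ because $d(x,\psi x)\geq s-\e_0/4>0$ at $x = \tilde\gamma(t_0+s')$, using $s>\e_0>\e_0/4$ from the setup. Hence $\psi$ is hyperbolic with axis $\tilde\alpha \subset \H^n$ and translation length $p = \inf_y d(y,\psi y) \leq d(x,\psi x) \leq s+\e_0/4$, which directly yields the claimed period bound.

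The quantitative heart of the argument is showing that $\tilde\alpha$ stays $\e_0/2$-close to $\tilde\gamma$ on a long subinterval. The geodesics $\psi\tilde\gamma$ and the $s$-shifted $\tilde\gamma$ fellow-travel within $\e_0/4$ on an interval of length $l-2s'$, which is enormous compared to $s_0$. Since the distance between two distinct geodesics in $\H^n$ grows exponentially away from its minimum, such a long fellow-travel forces the fixed points $\xi_\pm \in \partial\H^n$ of $\psi$ (the endpoints of $\tilde\alpha$) to lie close to the endpoints of $\tilde\gamma$. Combining this with the Saccheri-type hyperbolic identity
\be \nonumber
	\sinh(d(y,\psi y)/2) \geq \cosh(d(y,\tilde\alpha))\sinh(p/2),
\ee
applied at $y = \tilde\gamma(t_0+s'+t)$ (the inequality accommodates the possible rotational component of $\psi$), together with the convexity of $r(t) := d(\tilde\gamma(t_0+s'+t),\tilde\alpha)$, yields the uniform bound $r(t) < \e_0/2$ throughout $[0,l-2s']$. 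Projecting $\alpha := \pi \circ \tilde\alpha$ gives a closed geodesic of period $p \leq s+\e_0/4$, and after reparameterizing so that $\alpha(0)$ is the foot of perpendicular from $\tilde\gamma(t_0+s')$ to $\tilde\alpha$, the bound descends to $d(\alpha(t),\gamma(t_0+s'+t))<\e_0/2$ on $[0,l-2s']$. The extension to the tail $t\in[l-2s',\, s+l-2s'-\e_0]$ then uses $\alpha(t)=\alpha(t-p)$ combined with the fellow-traveler estimate $d(\gamma(t_0+s'+t-p),\gamma(t_0+s'+t)) \leq \e_0/4+|s-p|$; the explicit slack $\e_0$ in the upper endpoint absorbs the accumulated error $|s-p|\leq \e_0/4$.

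The main obstacle is the quantitative step: the Saccheri identity applied at a single $y$ gives only a pointwise bound of order $\sqrt{\e_0}$ on $r(t)$, so the globality of the fellow-travel (of length $\approx l \gg s_0$) must be exploited via the exponential divergence of geodesics in $\H^n$ to drive the minimum of $r$ essentially to zero, after which convexity propagates the desired linear $\e_0/2$ bound. Bookkeeping the buffers $s'=2i_M$ at both ends, the comparison between $p$ and $s$, and the tail extension so that the valid range comes out exactly to $[0,\,s+l-2s'-\e_0]$ is where the constants need to be tracked most carefully.
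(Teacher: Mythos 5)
Your skeleton matches the paper's: lift $\gamma$ to $\H^n$, use discreteness and the $2i_M$-separation of $\Gamma$-orbits to extract a single deck transformation $\psi$ with $d(\psi\tilde\gamma(t_0+t),\tilde\gamma(t_0+s+t))<\e_0/4$ on the buffered interval, take $\alpha$ to be the projection of the axis $\tilde\alpha$ of $\psi$, and get $p\leq s+\e_0/4$ from the displacement at a single point. The gap is exactly where you flag it, and your proposed repair does not work. Convexity of $r(t)=d(\tilde\gamma(t_0+s'+t),\tilde\alpha)$ propagates an upper bound \emph{inward from the endpoints} $r(0)=d_1$, $r(l-2s')=d_2$; driving the interior minimum of $r$ to zero tells you nothing about $d_1,d_2$, so "minimum near zero plus convexity" cannot yield $r<\e_0/2$ on the whole interval. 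Moreover, even after the long fellow-travel pins down $p\in[s-\e_0/4,s+\e_0/4]$, the Saccheri-type inequality $\sinh(d_\psi(x_1)/2)\geq\cosh(d_1)\sinh(p/2)$ at the endpoint only gives $\cosh(d_1)\leq\sinh((s+\e_0/4)/2)/\sinh((s-\e_0/4)/2)=1+O(\e_0)$, i.e.\ $d_1=O(\sqrt{\e_0})$, which is much larger than $\e_0/2$ for small $\e_0$. So your route bounds the endpoints only at the square-root scale, and the claimed linear bound never materializes.

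The paper closes this with a bootstrap at the endpoint that you are missing, and it is the reason the buffer $s'=2i_M>2\ln 2$ appears. Writing $\beta$ for the lift on $[t_0+s',t_0+l-s']$ with endpoints $x_1,x_2$ and $d_i=d(x_i,\tilde\alpha)$: one first checks $|d_1-d_2|$ is controlled (the displacement function grows in the distance to the axis while $d_\psi(x_i)\in[s-\e_0/4,s+\e_0/4]$), so the convex function $t\mapsto d(\beta(t),\tilde\alpha)$ contracts exponentially over the initial buffer and $d(\beta(s'),\tilde\alpha)<d_1/2$ since $e^{-s'}<1/4$. Since $\psi x_1$ lies within $\e_0/4$ of $\beta$ near that point, while $d(\psi x_1,\tilde\alpha)=d(\psi x_1,\psi\bar x_1)=d_1$, the triangle inequality gives the self-improving estimate $d_1< d_1/2+\e_0/4$, hence $d_1<\e_0/2$ (and likewise $d_2$), after which convexity legitimately gives $r<\e_0/2$ on all of $[0,l-2s']$. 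You need this (or an equivalent self-improvement) to get the linear bound; your tail extension by periodicity of $\alpha$ is fine once the endpoint bound is in place.
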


\noindent Let $N' = \lceil  s_0 / p \rceil \in \N$ be the smallest integer such that $ N'p>\bar s_0$ and note that $2Ns \geq N'p$. We then have by the triangle inequality,
\bea \nonumber
&&d(\gamma(t_0+s' + t), \gamma(t_0 +s'+ N'p + t)) \\
&\leq& d(\gamma(t_0+s' +t), \alpha(t)) +d(\gamma(t_0+s'+N'p +t), \alpha(t))< \e_0\nonumber
\eea
for all  $0\leq t \leq l-2s'-N'p+s$ and in particular for all $0\leq t \leq l-2s'-2Ns_0$.
Thus, 
\be \nonumber
	2Ns  \geq N'p > c( \tilde\delta,l_1) \varphi_{\tilde\delta}(l-2s'-2N s_0)) = \frac{c(\tilde\delta, l_1)}{ e^{\tilde\delta(n-1)(2s'+2N\bar s_0)}} \varphi_{ \tilde\delta}(l),
\ee
and we can find a positive constant $c_0=c_0(\tilde\delta,i_M,n,\e_0)$ such that $s>c_0\varphi_{ \tilde\delta}(l)$.

In the case when $s> s_0$, we have
\be \nonumber
s> c( \tilde\delta,l_1) \varphi_{ \tilde\delta}(l)  \geq \ c_0 \varphi_{ \tilde\delta}(l).
\ee

Finally, since $\delta< \tilde\delta$, we restrict if necessary to $\tilde l_0 \geq l_0$ such that $c_0 \varphi_{ \tilde\delta}(l) \geq \varphi_{ \delta} (l)$ for all $l\geq \tilde l_0$.
The proof of Theorem \ref{MainThm} is finished by the proof of Lemma \ref{ClosedGeodesic}.

\begin{proof}[Proof of Lemma \ref{ClosedGeodesic}]
We consider the setting of the proof of Theorem \ref{TheoremGeodesic}.
Let now $d_M$ be the distance function on $M$ and recall that we have $d_M(\gamma(t_0 + t) , \gamma(t_0 +s +t)< \e_0/4$ for all $s' \leq t \leq l-s'$, where $s'=2i_M > 2\ln(2)$.
We denote a lift of the segment $\gamma$ on $[t_0+s', t_0+l-s']$ by $\beta$ and let the endpoints of $\beta$ be  $x_1$ and $x_2$. Since $\e_0<i_M$, there exists an isometry $\psi \in \Gamma$ such that $d(\beta, \psi (\beta(t)))< \e_0/4$ for all $t \in [t_0+s', t_0+l-s']$ and in particular, $d(x_i, \psi x_i) < \e_0/4$ for $i=1,2$.
Let $\tilde \alpha$ be the axis of $\psi$ and denote by $d_1=d(\tilde \alpha, x_1)$ and $d_2=d(\tilde \alpha,x_2)$.
We first show that $d_1$ is close to $d_2$ in the following sense.
Namely, the displacement function $d_{\psi}(\cdot)=d(\psi \cdot, \cdot)$ grows at least linearly in the distance to $\tilde \alpha$.
Since $s-\e_0/4\leq d_{\psi}(x_i)\leq s+\e_0/4$ for $i=1,2$ we see that $\lvert d_1 -d_2\rvert$ is bounded by a constant depending only on $\psi$, $s$ and $\e_0$.

Now, if we show that $d_i<\e_0/2$ for $i=1,2$, then the proof follows by convexity of the distance function.
We show this for $d_1$.
Since $d_1$ is close to $d_2$ and $l$ is large, the distance function $t\mapsto d(\beta(t), \tilde \alpha(t))$ decreases exponentially on $[0,s']$, where $\tilde \alpha$ is parametrized such that $\tilde \alpha(0)$ equals the orthogonal projection $\bar x_1$ of $x_1$ on the convex set $\tilde \alpha$.
Moreover, $s'$ is large and thus $d(\tilde \alpha, \beta(s')) < d_1/2$.  The orthogonal projection of $\psi (x_1)$ on $\tilde \alpha$ is given by $\psi (\bar x_1)$. Hence, $d(\psi (x_1), \tilde \alpha(s')) \geq d(\psi (x_1), \psi ( \bar x_1))  = d_1$.
On the other hand, we have by the triangle inequality
$d(\psi (x_1), \tilde \alpha(s')) \leq d(\psi (x_1), \beta(s')) + d(\beta(s'), \tilde \alpha(s')) < d_1/2 + \e_0/4$.
Thus, $d_1 <d_1/2 + \e_0/4$ and the claim follows.
\end{proof}

\bibliographystyle         {plain}      
\bibliography{cup_ref}

\begin{thebibliography}{10}

\bibitem{Dodson}
V.I. Bernik and M.M. Dodson.
\newblock {\em Metric Diophantine Approximation on Manifolds}, volume 137.
\newblock Cambridge University Press, 1999.

\bibitem{Bosh}
M.~D. Boshernitzan.
\newblock Quantitative recurrence results.
\newblock {\em Inventiones mathematicae}, 112:617--631, 1993.

\bibitem{Brid}
M.~R. Bridson and A.~Haeflinger.
\newblock {\em Metric Spaces of Non-Positive Curvature}.
\newblock Springer, 1999.

\bibitem{Eberlein}
P.~Eberlein.
\newblock {\em Geometry of Nonpositively Curved Manifolds}.
\newblock University Of Chicago Press, 1994.

\bibitem{Einsiedler}
M.~Einsiedler and T.~Ward.
\newblock {\em Ergodic Theory, with a view towards Number Theory}.
\newblock Springer, 2011.

\bibitem{Furstenberg}
H.~Furstenberg.
\newblock {\em Recurrence in Ergodic Theory and Combinatorial Number Theory}.

\bibitem{Haas}
A.~Haas.
\newblock Geodesic cusp excursions and metric diophantine approximation.
\newblock {\em Math. Res. Let.}, 16:67--85, 2009.

\bibitem{Heintze}
E.~Heintze and H.C.~Im Hof.
\newblock Geometry of horospheres.
\newblock {\em Journal of Differential Geometry}, Volume 12(Number 4):481--491,
  1977.

\bibitem{Hersonsky}
S.~Hersonsky and F.~Paulin.
\newblock Diophantine approximation for negatively curved manifolds.
\newblock {\em Mathematische Zeitschrift}, 241:181--226, 2002.

\bibitem{Paulin}
S.~Hersonsky and F.~Paulin.
\newblock On the almost sure spiraling of geodesics in negatively curved
  manifolds.
\newblock 85:271--314, 2010.

\bibitem{Morse}
M.~Morse and G.~Hedlund.
\newblock Unending chess, symbolic dynamics and a problem in semigroups.
\newblock {\em Duke Math. Journal}, 11:1--7, 1944.

\bibitem{Weiss}
D.~Ornstein and B.~Weiss.
\newblock Entropy and recurrence rates for stationary random fields.
\newblock {\em IEE Transactions on Information Theory}, 48(6), 2002.

\bibitem{Parkkonen}
J.~Parkkonen and F.~Paulin.
\newblock Prescribing the behaviour of geodesics in negative curvature.
\newblock {\em Geometry and Topology}, 14:277--392, 2010.

\bibitem{Parkkonen2}
J.~Parkkonen and F.~Paulin.
\newblock Spiraling spectra of geodesic lines in negatively curved manifolds.
\newblock {\em Mathematische Zeitschrift}, 268:101--142, 2011.

\bibitem{Patterson}
S.J. Patterson.
\newblock Diophantine approximation in fuchsian groups.
\newblock {\em Philos. Trans. Roy. Soc. London. Series A}, 282:527--563, 1976.

\bibitem{Robinson}
J.~C. Robinson.
\newblock {\em Dimensions, Embeddings and Attractors}, volume 186.
\newblock Cambridge University Press, 2011.

\bibitem{Sullivan}
D.~Sullivan.
\newblock Disjoint spheres, approximation by imaginary quadratic numbers, and
  the logarithm law for geodesics.
\newblock {\em Acta Math.}, 149:215--237, 1982.

\bibitem{Velani}
S.J. Velani.
\newblock Diophantine approximation and hausdorff-dimension in fuchsian groups.
\newblock {\em Math. Proc. Camb. Phil. Soc.}, 113:343--354, 1993.

\bibitem{BGS}
M.~Gromov W.~Ballmann and V.~Schroeder.
\newblock {\em Manifolds of nonpositive curvature}, volume~61.
\newblock Birkh\"auser, 1985.

\bibitem{Walters}
P.~Walters.
\newblock {\em Introduction to Ergodic Theory}, volume~79.
\newblock Springer, 1981.

\end{thebibliography}

 \end{document}